\setlist[enumerate,1]{label=(\roman*)}
\setlist[enumerate,2]{label=(\alph*)}
\theoremstyle{plain}
\newtheorem{theorem}{Theorem}[section]
\newtheorem{lemma}[theorem]{Lemma}
\newtheorem{proposition}[theorem]{Proposition}
\theoremstyle{definition}
\newtheorem{definition}[theorem]{Definition}
\newtheorem{remark}[theorem]{Remark}
\newcommand\nobreakpar{\par\nobreak\@afterheading} 
\newcommand{\as}{\\[.6em]}
\newcommand{\bela}[1]{\begin{equation}\label{#1}}
\newcommand{\ela}{\end{equation}}
\newcommand{\bear}[1]{\begin{array}{#1}}
\newcommand{\ear}{\end{array}}
\renewcommand{\Xi}{\mathsf{H}}
\newcommand{\br}{\mbox{\boldmath $r$}}
\newcommand{\bR}{\mbox{\boldmath $R$}}
\newcommand{\plane}{\mbox{$\Box$}}
\newcommand{\Z}{\mathbbm{Z}}
\newcommand{\R}{\mathbbm{R}}
\newcommand{\C}{\mathbbm{C}}
\newcommand{\two}{\mathbbm{2}}
\renewcommand{\P}{\mathrm{P}}
\newcommand{\RP}{\R\P}
\newcommand{\CP}{\C\P}
\newcommand{\setbar}{\ | \ }
\newcommand{\set}[2]{\left\{#1 \setbar #2 \right\}}
\newcommand{\hide}[1]{}
\numberwithin{equation}{section}
\title{A canonical discrete analogue of the classical circular cross sections of ellipsoids and their isometric deformation}
\author{B.\ Huang$^{1}$, W.K.\ Schief$^{1}$ and J.\ Techter$^{2}$
\bigskip\\  
$^{1}$School of Mathematics and Statistics, The University of New South Wales,\\ Sydney, NSW 2052, Australia
\bigskip\\
$^{2}$Institut f\"ur Mathematik, TU Berlin,\\
Str.\ des 17.\ Juni 136, 10623 Berlin, Germany}
\begin{document}

\maketitle

\begin{abstract}
  Based on a novel discretization procedure which has recently been proposed and applied in the construction of a canonical discrete analogue of confocal coordinate systems, an explicit method of constructing discrete analogues of ellipsoids is recorded.
  These discrete ellipsoids are entirely composed of planar quadrilaterals
  and come in pairs of combinatorially dual polyhedra,
  which together form princpal binets, a discrete counterpart of curvature line parametrizations.
  They exhibit a variety of algebraic and geometric properties which are classical in the case of their continuous counterparts.
  In a special case, it is demonstrated that the diagonals of the quadrilaterals of these discrete ellipsoids form two families of closed planar polygons.
  These polygons may be regarded as the discrete analogues of the classical circular cross sections of an ellipsoid. As in the continuous case, the discrete circles of any family lie in parallel planes and degenerate at the ``boundary'' of the discrete ellipsoids to ``umbilic vertices'' or ``umbilic edges''.
  Moreover, in analogy with the continuous theory, discrete ellipsoids admit a one-parameter family of deformations that preserve these discrete circles.
\end{abstract}

\newpage

\tableofcontents

\newpage

\begin{figure}[H]
  \centering
  \includegraphics[width=0.49\textwidth, trim={50pt 100pt 50pt 100pt}, clip]{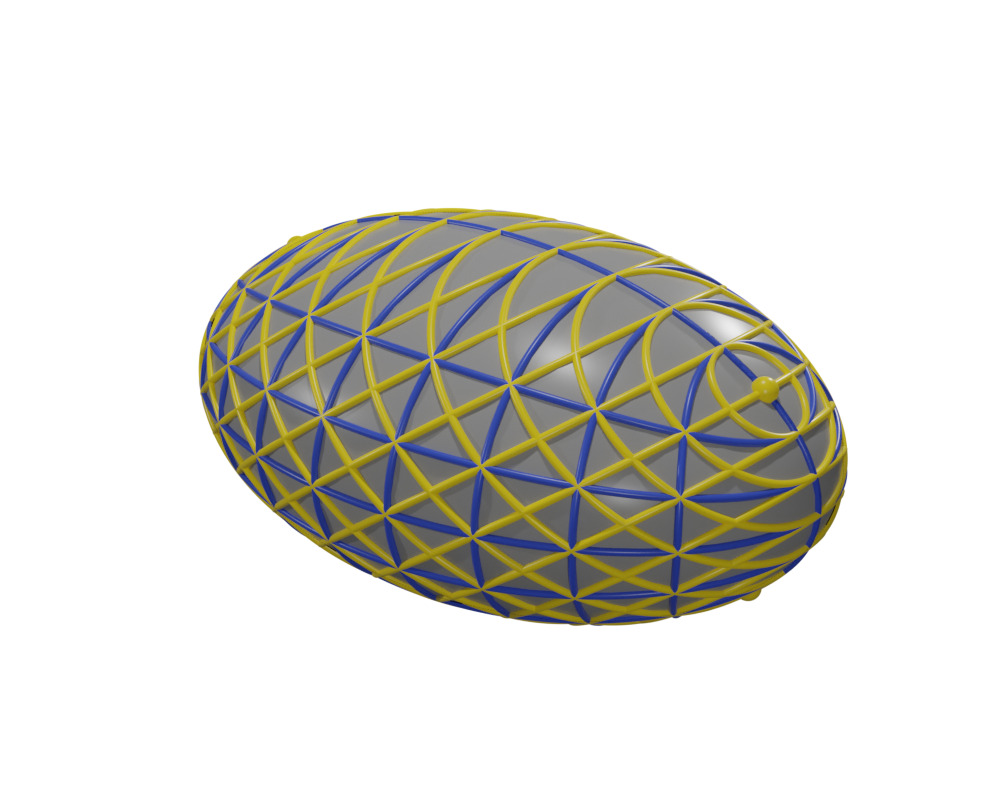}
  \includegraphics[width=0.49\textwidth, trim={50pt 100pt 50pt 100pt}, clip]{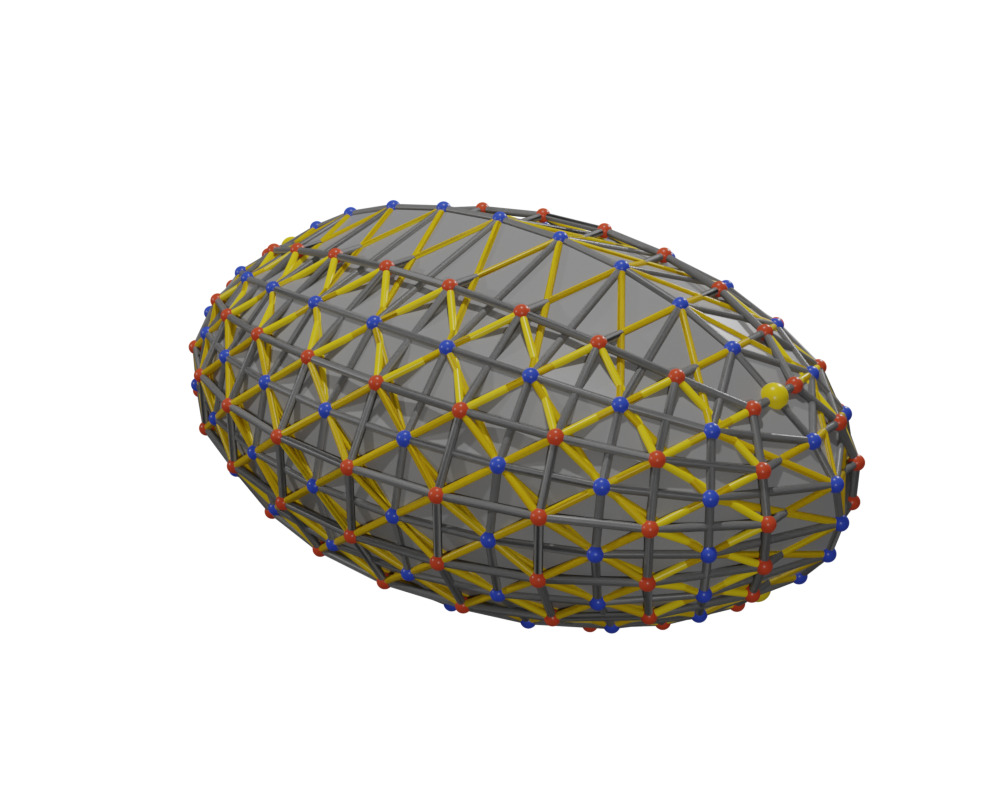}
  \caption{
    \emph{Left:}
    A closed sampling of a web of diagonally related nets of lines of curvature and circular cross sections on an ellipsoid.
    \emph{Right:}
    A discrete principal binet representing a discrete ellipsoid with discrete circular cross sections.
  }
  \label{fig:smooth-vs-discrete}
\end{figure}

\section{Introduction}
Since the publication of the seminal papers on the integrability-preserving discretization of the classical classes of surfaces of constant negative Gaussian curvature \cite{BobenkoPinkall96a} and isothermic surfaces by Bobenko and Pinkall \cite{BobenkoPinkall96b}, discrete differential geometry \cite{BobenkoSuris08} has rapidly developed into a sophisticated area of both pure and applied mathematics. In particular, canonical discretizations of classical coordinate systems on surfaces associated with asymptotic and conjugate nets and nets formed by lines of curvature have been used extensively to discretize (classical) classes of surfaces in Euclidean, affine, conformal and projective geometry (see \cite{BobenkoSuris08} and references therein). This development has also revealed the intimate connection between discrete geometries and the (B\"acklund) transformation theory \cite{RogersSchief02} which may be used to generate the analogous continuous counterparts.

In general, the local geometric and algebraic properties of these discrete coordinate systems and surfaces are well understood. However, the ``global'' properties are much harder to predict. For instance, classical quadrics constitute the simplest isothermic surfaces, but it has not been evident how their discrete counterparts may be generated based on the standard discretization of isothermic surfaces. In fact, this problem has recently led to an important alternative discretization of lines of curvature which has been used to define a canonical analogue of classical discrete confocal quadrics, thereby generating discrete analogues of quadrics of all signatures, including ellipsoids \cite{BobenkoSchiefSurisTechter16,BobenkoSchiefSurisTechter18}. Initially, in \cite{BobenkoSchiefSurisTechter16}, this discretization was based on (a discrete version \cite{KonopelchenkoSchief14} of) the Euler-Poisson-Darboux equation which encodes a special parametrization of the lines of curvature on quadrics. However, subsequently, it has been demonstrated \cite{BobenkoSchiefSurisTechter18} that any parametrization of the lines of curvature on confocal quadrics may be discretized geometrically with the algebraic properties of the continuous and discrete coordinate systems being remarkably alike.
Building on these ideas, this discretization of curvature line parametrizations has evolved into the broader theory of principal binets \cite{principal-binets}.

In \cite{AkopyanBobenkoSchiefTechter21}, a parametrization of the lines of curvature on (classical) ellipsoids has been introduced which is such that the coordinate lines of the associated diagonal coordinate system \cite{Blaschke28,Koch76,Koch77} constitute circles (see Figure~\ref{fig:smooth-vs-discrete}, left). These circles are the classical cross sections of generic ellipsoids \cite{HilbertCohnVossen52} which are obtained by slicing any ellipsoid by planes which are parallel to the tangent planes of the ellipsoid at the umbilic points. Since generic ellipsoids possess four umbilic points with pairs of associated tangent planes being parallel, one obtains two families of circles which form the coordinate lines of the above-mentioned diagonal coordinate system. It has also been shown in \cite{AkopyanBobenkoSchiefTechter21} that one may scale any one-parameter family of confocal ellipsoids in such a manner that corresponding circular cross sections of the members of this family are congruent. Hence, one may regard this one-parameter family of ellipsoids as being generated by the deformation of a single ellipsoid which preserves the circular cross sections. The existence of such a deformation was asserted in the classical book {\em Geometry and the Imagination} by Hilbert and Cohn-Vossen \cite{HilbertCohnVossen52} but, therein, no proof was given. 

The aim of this paper is to demonstrate that the novel discretization procedure which has led to the construction of discrete confocal quadrics preserves the remarkable geometric properties of continuous ellipsoids related to their circular cross sections (see Figure~\ref{fig:smooth-vs-discrete}, right). Thus, based on a discrete analogue of the above-mentioned privileged parametrization of the lines of curvature on ellipsoids, we show that the diagonals of the planar faces of a discrete ellipsoid form closed planar polygons which lie in two families of parallel planes. At the ``boundary'' of a discrete ellipsoid, these ``discrete circles'' degenerate to vertices or edges which may be regarded as discrete analogues of umbilic points. These ``umbilic vertices'' or the vertices of these ``umbilic edges'' are distinct in that their valence turns out to be different from four. We also introduce a scaling of any one-parameter family of discrete confocal ellipsoids which, once again, leads to congruent discrete circles amongst the members of this family. Accordingly, the deformation property of the circular cross sections of ellipsoids is, remarkably, also shown to be maintained by the discretization procedure.

The main result of this paper is Theorem~\ref{discretemaintheorem}, which summarizes the properties of the discrete ellipsoids and their discrete circular cross sections considered here. This is complemented by Theorem~\ref{thm:discrete-umbilics}, which establishes corresponding properties of the discrete umbilics of these discrete ellipsoids. Together, these results form a discrete analogue of the classical statements on circular cross sections of classical ellipsoids as summarized in Theorem~\ref{thm:circular-cross-sections}.
We then investigate boundary conditions for the discrete ellipsoids in Theorem~\ref{thm:boundary-conditions} and Theorem~\ref{thm:boundary-condition-quality}, and compare them to the closing condition for sampled webs on classical ellipsoids stated in Theorem~\ref{thm:closing-condition-web}.
Along the way, in Theorem~\ref{thm:discrete-ellipsoid-curvature-line-parametrization}, we also obtain a broader result worth mentioning:
a general principal binet representation which provides a discretization of arbitrary curvature line parametrizations of ellipsoids.

\subsection*{Acknowledgements}
This paper is partially based on the honours thesis by one of the authors (B.H.) \cite{huang-honours}.
W.K.S.\ was supported by the Australian Research Council (ARC Discovery Project DP200102118).
J.T.\ was supported by the Deutsche Forschungsgemeinschaft (DFG) Collaborative Research Center TRR 109 ``Discretization in Geometry and Dynamics''.
We would also like to thank Günther Rothe for an insightful discussion on affine transformations.

Most rendered images in this paper were generated using \texttt{pyddg} \cite{pyddg} and \texttt{Blender}.

\section{Confocal quadrics, curvature lines on quadrics, and diagonally related nets}

A classical system of confocal quadrics in Euclidean space $\R^3$ is a set of all quadrics which share their focal conics \cite{HilbertCohnVossen52, universe-of-quadrics}. In terms of Cartesian coordinates $\br=(x,y,z)$, these are algebraically defined by
\bela{E0}
  \frac{x^2}{\lambda+a} + \frac{y^2}{\lambda+b} + \frac{z^2}{\lambda+c} = 1,
\ela
where the distinct constants $a,b$ and $c$ determine the overall shape of the collection of confocal quadrics and the parameter $\lambda$ labels the quadrics. Any system of confocal quadrics is composed of three families of quadrics given by 
\bela{E1}
 \begin{split}
  \frac{x^2}{u_1 + a} + \frac{y^2}{u_1 + b} + \frac{z^2}{u_1 + c} & = 1\\
  \frac{x^2}{u_2 + a} + \frac{y^2}{u_2 + b} + \frac{z^2}{u_2 + c} & = 1\\
  \frac{x^2}{u_3 + a} + \frac{y^2}{u_3 + b} + \frac{z^2}{u_3 + c} & = 1\as
  -a < u_1 < -b < u_2 <- c &< u_3 ,
\end{split}
\ela
where it has been assumed without loss of generality that $a>b>c$. The parameters $u_i$ label the three families of quadrics of different type, namely two-sheeted hyperboloids, one-sheeted hyperboloids and ellipsoids respectively. The lines of intersection of the confocal quadrics are the coordinate lines of an orthogonal coordinate system $\br = \br(u_1,u_2,u_3)$ and, therefore \cite{Eisenhart60}, constitute lines of curvature on the quadrics $u_i=\mbox{const}$ parametrized by $u_j$ and $u_k$, where $i,j,k$ are distinct. Any triple of confocal quadrics of different signature meet orthogonally in a unique point $\br = (x,y,z)$ of any octant of $\R^3$ as indicated in Figure~\ref{confocal}.
\begin{figure}
  \centering
  \includegraphics[width=0.4\textwidth]{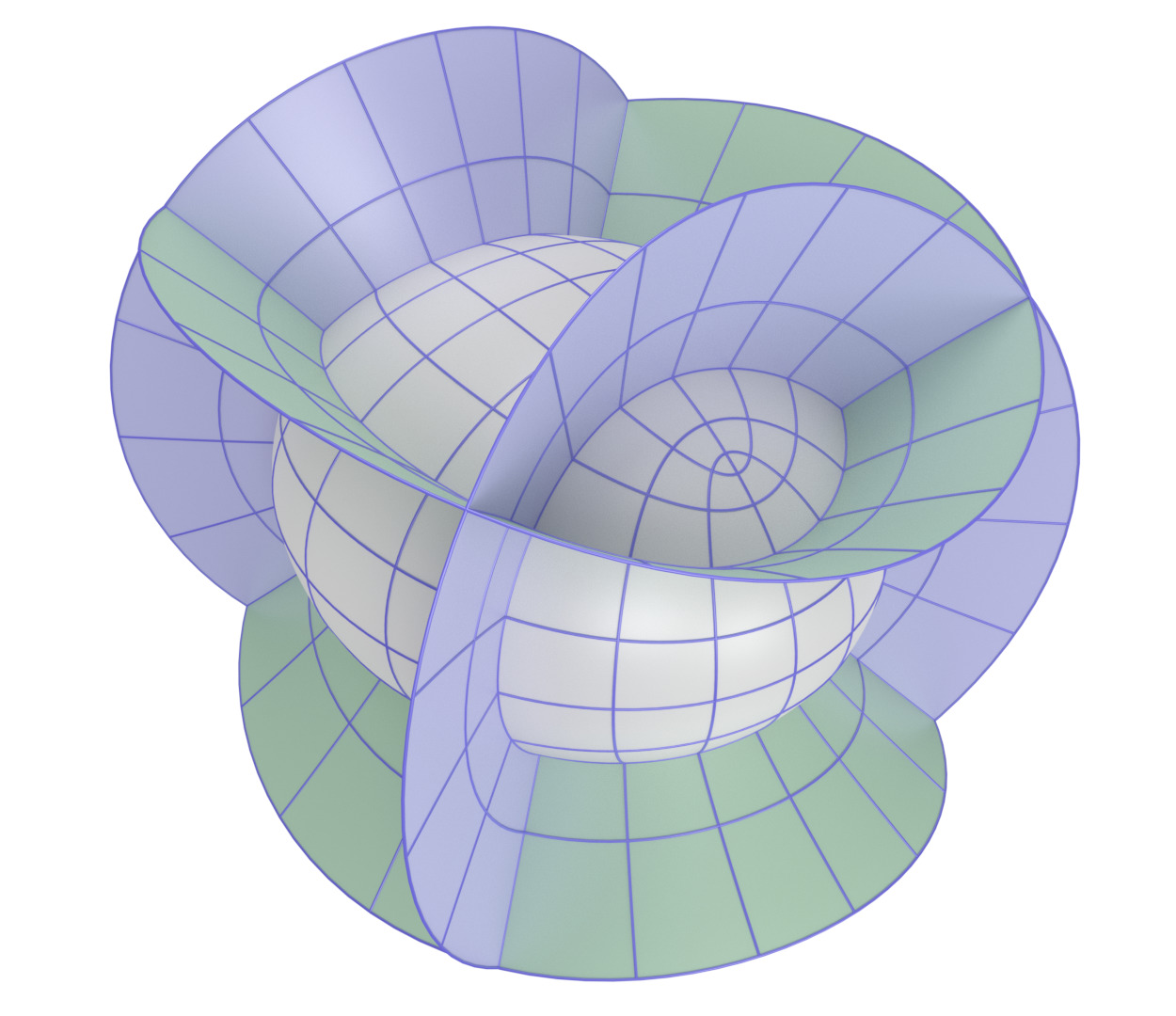}
  \caption{The intersection of three confocal quadrics of different signature.}
  \label{confocal}
\end{figure}
This fact is encoded in the explicit representation
\bela{E2}
 \begin{split}
  x(u_1, u_2, u_3)^2 &= \frac{(u_1+a)(u_2+a)(u_3+a)}{(a-b)(a-c)}\\ 
  y(u_1, u_2, u_3)^2 & = \frac{(u_1+b)(u_2+b)(u_3+b)}{(b-a)(b-c)}\\
  z(u_1, u_2, u_3)^2 & = \frac{(u_1+c)(u_2+c)(u_3+c)}{(c-a)(c-b)}  
 \end{split}
\ela
of the coordinate system. The representation above is also valid in the limiting cases $u_1=-a,-b$, $u_2=-b,-c$ and $u_3=-c$, leading to planar ``quadrics''. 

We introduce an arbitrary parametrization $\br(s_1,s_2,s_3)$ with $u_i = u_i(s_i)$ of the confocal coordinate lines,
leading to the novel representation of confocal coordinate systems set down in~\cite{BobenkoSchiefSurisTechter18}:
 \bela{E3}
 \begin{split}
   x(s_1, s_2, s_3) &= \frac{f_1(s_1)f_2(s_2)f_3(s_3)}{\sqrt{(a-b)(a-c)}}\\
   y(s_1, s_2, s_3) &= \frac{g_1(s_1)g_2(s_2)g_3(s_3)}{\sqrt{(a-b)(b-c)}}\\
   z(s_1, s_2, s_3) &= \frac{h_1(s_1)h_2(s_2)h_3(s_3)}{\sqrt{(a-c)(b-c)}},
 \end{split}
\ela
where the functions $f_i$, $g_i$ and $h_i$ are related by the functional equations
\bela{E4}
 \begin{aligned}
  f_1^2(s_1) + g_1^2(s_1) & = a-b,\quad & f_1^2(s_1) +h_1^2(s_1) & = a-c\\
  f_2^2(s_2) - g_2^2(s_2) & = a-b,\quad & f_2^2(s_2) +h_2^2(s_2) & = a-c\\
  f_3^2(s_3) - g_3^2(s_3) & = a-b,\quad & f_3^2(s_3) -h_3^2(s_3) & = a-c
 \end{aligned}
\ela
and the parameters $u_i$ are given by
\bela{E5}
\begin{aligned}
  u_1(s_1) &= f_1^2(s_1) - a = - g_1^2(s_1) - b = - h_1^2(s_1) - c\\
  u_2(s_2) &= f_2^2(s_2) - a = \phantom{-} g_2^2(s_2) - b   = - h_2^2(s_2) - c\\
  u_3(s_3) &= f_3^2(s_3) - a = \phantom{-} g_3^2(s_3) - b   = \phantom{-} h_3^2(s_3) - c.
\end{aligned}
\ela
Thus, any system of confocal quadrics is encoded in the system \eqref{E4} via the representation \eqref{E3}. It may easily be seen that \eqref{E2} is indeed satisfied modulo \eqref{E4}.

We now introduce the key notion of \emph{diagonally related nets}.
\begin{definition}\
  \label{mutual}
  \nobreakpar
  \begin{enumerate}
  \item
    A \emph{net} on a surface $\Sigma$ is a pair of one-parameter families of curves foliating $\Sigma$
    such that there exists exactly one curve of each family passing through any point on $\Sigma$.
  \item
    Let $\mathcal{N}_1$ and $\mathcal{N}_2$ be two nets on a surface $\Sigma$.
    Then, $\mathcal{N}_2$ is termed {\em diagonal to} $\mathcal{N}_1$
    if the existence of a curve of $\mathcal{N}_2$ through a pair of opposite vertices of any ``quadrilateral'' formed by four curves of $\mathcal{N}_1$
    implies that the remaining pair of opposite vertices is also connected by a curve of $\mathcal{N}_2$ (cf.\ Figure \ref{diagonal}).
  \end{enumerate}
\end{definition}
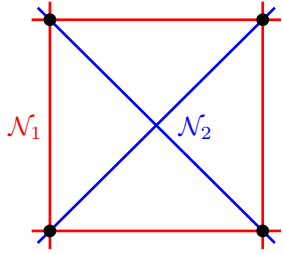
\begin{figure}
  \centering
  \begin{tikzpicture}[line cap=line join=round,>=stealth,x=1.0cm,y=1.0cm, scale=0.8]
    % lines
    \draw [line width=1pt, color=red,  domain=-0.3:3.8] plot(0.,\x);
    \draw [line width=1pt, color=red,  domain=-0.3:3.8] plot(3.5,\x);
    \draw [line width=1pt, color=red,  domain=-0.3:3.8] plot(\x,0.);
    \draw [line width=1pt, color=red,  domain=-0.3:3.8] plot(\x,3.5);
    \draw [line width=1pt, color=blue,  domain=-0.2:3.7] plot(\x,\x);
    \draw [line width=1pt, color=blue,  domain=-0.2:3.7] plot(\x,3.5 - \x);
    % points
    \coordinate (p1) at (0.0,3.5);
    \fill (p1) circle (3pt);
    \coordinate (p2) at (0.0,0.0);
    \fill (p2) circle (3pt);
    \coordinate (p3) at (3.5,3.5);
    \fill (p3) circle (3pt);
    \coordinate (p4) at (3.5,0.0);
    \fill (p4) circle (3pt);
    % labels
    \draw [color=red] (-0.4, 1.7) node {$\mathcal{N}_1$}; 
    \draw [color=blue] (2.4, 1.7) node {$\mathcal{N}_2$};      
  \end{tikzpicture}
  \caption{Quadrilateral of two diagonally related nets $\mathcal{N}_1$ and $\mathcal{N}_2$.}
  \label{diagonal}
\end{figure}

The above relation is symmetric \cite{Blaschke28},
that is, $\mathcal{N}_2$ being diagonal to $\mathcal{N}_1$ is equivalent to $\mathcal{N}_1$ being diagonal to $\mathcal{N}_2$.
Here, we make the genericity assumption that any curve of one family of a net intersects any curve of the other family of the net.

For a parametrization $(u_1,u_2) \mapsto \br(u_1,u_2)$ of a surface $\Sigma$,
the two families of coordinate lines $u_1=\mbox{const}$ and $u_2=\mbox{const}$ constitute a net $\mathcal{N}_1$ on $\Sigma$.
In this case, all nets $\mathcal{N}_2$ diagonal to $\mathcal{N}_1$ are given by reparametrizations $(s_+,s_-) \mapsto \br(s_+,s_-)$,
which are compositions of a reparametrization along the coordinate lines $u_1 = u_1(s_1)$, $u_2 = u_2(s_2)$ and $s_\pm = \frac{s_1 \pm s_2}{2}$.
Applied to the system of confocal coordinates, we obtain the following.

\begin{theorem}
  \label{thm:diagonal-curvature-lines}
  Let $i,j,k \in \{1,2,3\}$ be distinct and $s_i = \rm{const}$.
  Consider the parametrization
  \[
    (s_j,s_k) \mapsto \br(s_1,s_2,s_3)
  \]
  given by \eqref{E3} with \eqref{E4}
  of the quadric $\mathcal{Q}$ defined by $s_i = \rm{const}$.
  \begin{enumerate}
  \item
    The net of curvature lines on $\mathcal{Q}$ is given by the curves $s_j = \mbox{const}$ and $s_k = \mbox{const}$.
  \item
    A net given by the curves
    \[
      s_j \pm s_k = \mbox{const}
    \]
    is diagonal to the net of curvature lines on $\mathcal{Q}$.
    Conversely, every net diagonal to the net of curvature lines on $\mathcal{Q}$ is described in this way.
  \end{enumerate}
\end{theorem}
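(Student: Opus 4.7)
The plan is to reduce both parts of the theorem to two facts already available in the text: the classical result (cited via Eisenhart) that the coordinate surfaces of a triply orthogonal system intersect along their common lines of curvature, and the general characterization of nets diagonal to a coordinate net described in the paragraph immediately preceding the statement.

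For part (i), I would first observe that the relations \eqref{E5} express each $u_i$ as a function of $s_i$ alone. Consequently, on the quadric $\mathcal{Q}$ defined by $s_i = \mbox{const}$ (equivalently $u_i = \mbox{const}$), the level sets of $s_j$ coincide with those of $u_j$, and likewise for $s_k$ and $u_k$, on any branch where the maps $u_i(s_i)$ are invertible (this is the genericity assumption noted after Definition~\ref{mutual}). Since the curves $u_j = \mbox{const}, u_k = \mbox{const}$ on $\mathcal{Q}$ are the intersections with the confocal quadrics of the other two signatures and hence lines of curvature on $\mathcal{Q}$, the same holds for the curves $s_j = \mbox{const}, s_k = \mbox{const}$.

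For the forward direction of part (ii), I would simply apply the general diagonal-net characterization recalled just before the theorem: introducing $s_\pm = (s_j \pm s_k)/2$, the curves $s_+ = \mbox{const}$ and $s_- = \mbox{const}$, equivalently $s_j + s_k = \mbox{const}$ and $s_j - s_k = \mbox{const}$, form a net whose opposite vertices on any of its quadrilaterals are joined by coordinate curves of $\{s_j = \mbox{const}\} \cup \{s_k = \mbox{const}\}$, so by construction it is diagonal to the latter net, which by (i) is the curvature line net on $\mathcal{Q}$.

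The converse is where the main care is needed. By the same general characterization, any net diagonal to the curvature line net on $\mathcal{Q}$ arises from a reparametrization $u_j = \tilde u_j(\tilde s_j)$, $u_k = \tilde u_k(\tilde s_k)$ of the curvature coordinates via the $\pm$-construction applied to $\tilde s_j, \tilde s_k$. The obstacle is then to check that the representation \eqref{E3}--\eqref{E4} is flexible enough to realize every such reparametrization while remaining compatible with the constraints \eqref{E4}. Given any $\tilde u_j(\tilde s_j)$, one sets $f_j(\tilde s_j) = \sqrt{\tilde u_j(\tilde s_j) + a}$ on an interval where the square root is real, determines $g_j, h_j$ (up to signs) from \eqref{E4}, and notes that \eqref{E5} then returns precisely $f_j^2(\tilde s_j) - a = \tilde u_j(\tilde s_j)$; the analogous construction for the index $k$ finishes the realization, so every diagonal net appears in the form claimed.
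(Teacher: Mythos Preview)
Your proposal is correct and follows essentially the same route as the paper. The paper does not supply an explicit proof environment for this theorem; it is presented as the direct specialization of the general diagonal-net characterization stated in the paragraph immediately before it (``Applied to the system of confocal coordinates, we obtain the following''), together with the Dupin--Eisenhart fact that confocal coordinate surfaces meet along curvature lines. You have simply unpacked these two ingredients, including the small verification for the converse that any reparametrization $u_j=\tilde u_j(\tilde s_j)$, $u_k=\tilde u_k(\tilde s_k)$ can be realized within the ansatz \eqref{E3}--\eqref{E4} via \eqref{E5}, which the paper leaves implicit.
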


\section{Circular cross sections of ellipsoids and their isometric deformation}

The main result of the current paper is to demonstrate that the following key properties of ellipsoids admit canonical discrete analogues.
Any generic ellipsoid possesses four umbilic points, that is, four points at which the principal curvatures coincide so that the normal curvature at these points is independent of the direction of the normal plane. Pairs of ``opposite'' tangent planes at the four umbilic points are parallel. The cross section with the ellipsoid of any plane which is parallel to any of these tangent planes constitutes a circle (see Figure~\ref{fig:circular-cross-sections-and-curvature-lines}, left).
\begin{figure}
  \centering
  \includegraphics[width=0.49\textwidth, trim={50pt 100pt 50pt 100pt}, clip]{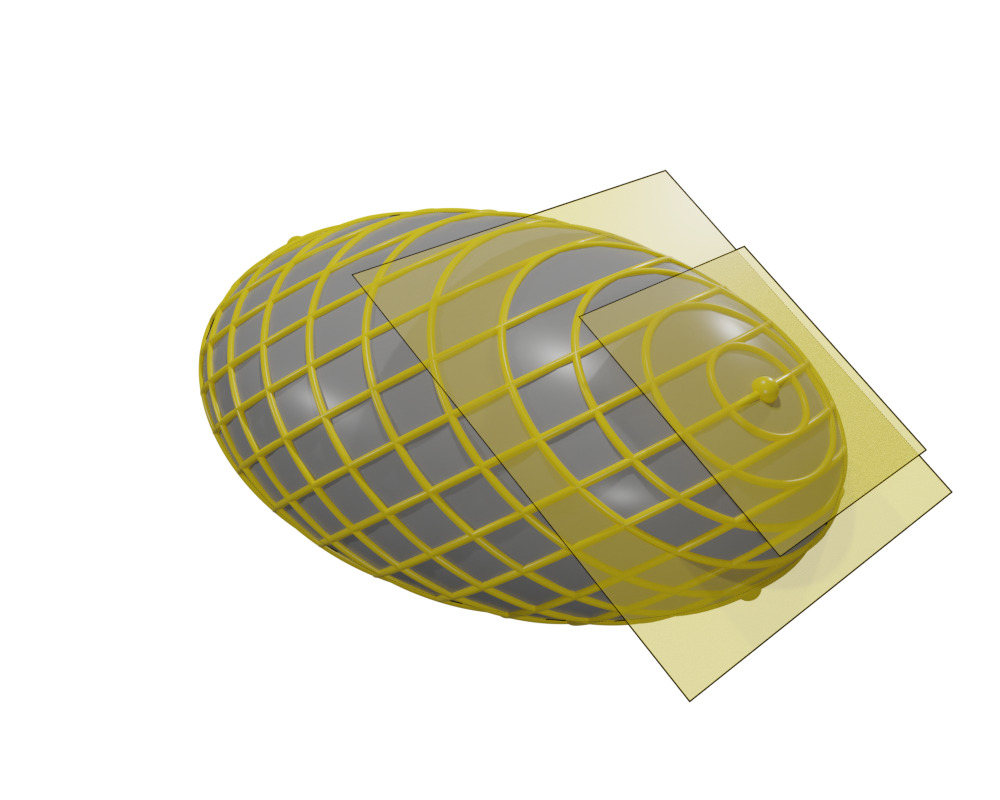}
  \includegraphics[width=0.49\textwidth, trim={50pt 100pt 50pt 100pt}, clip]{ellipsoid_3}
  \caption{
    \emph{Left:}
    The circular cross sections of an ellipsoid lie in the two families of planes
    which are parallel to the tangent planes at the two pairs of opposite umbilic points.
    \emph{Right:}
    The net of curvature lines and the net of circular cross sections of an ellipsoid are diagonally related.
  }
  \label{fig:circular-cross-sections-and-curvature-lines}
\end{figure}

In \cite{AkopyanBobenkoSchiefTechter21} it has been observed, that the net
composed of the two families of circular cross sections of an ellipsoid
is diagonally related to the net consisting of its lines of curvature (see Figure~\ref{fig:circular-cross-sections-and-curvature-lines}, right).
In \cite{HilbertCohnVossen52}, it has been stated without proof that any elliposid may be deformed
into a one-parameter family of ellipsoids in such a manner that its two one-parameter families of circular cross sections are preserved
and the deformed circles are congruent to the original circles (see Figure~\ref{fig:ellipsoid-family}).
In particular, the deformation is isometric along the circles, that is, arc length along any circle between any two points on the circle is preserved by the deformation.
Thus, one may construct a deformable model of an ellipsoid by realising a sample of circular cross sections as a collection of ``rings''
which are fastened at the points of intersection (see Figure \ref{circular_wien}).
In \cite{AkopyanBobenkoSchiefTechter21} this deformation is constructed by scaling the one-parameter family of confocal ellipsoids
in such a manner that all the ellipsoids share the second semi-axis.
\begin{figure}
  \centering
  \includegraphics[width=0.24\textwidth, trim={50pt 0 50pt 0}, clip]{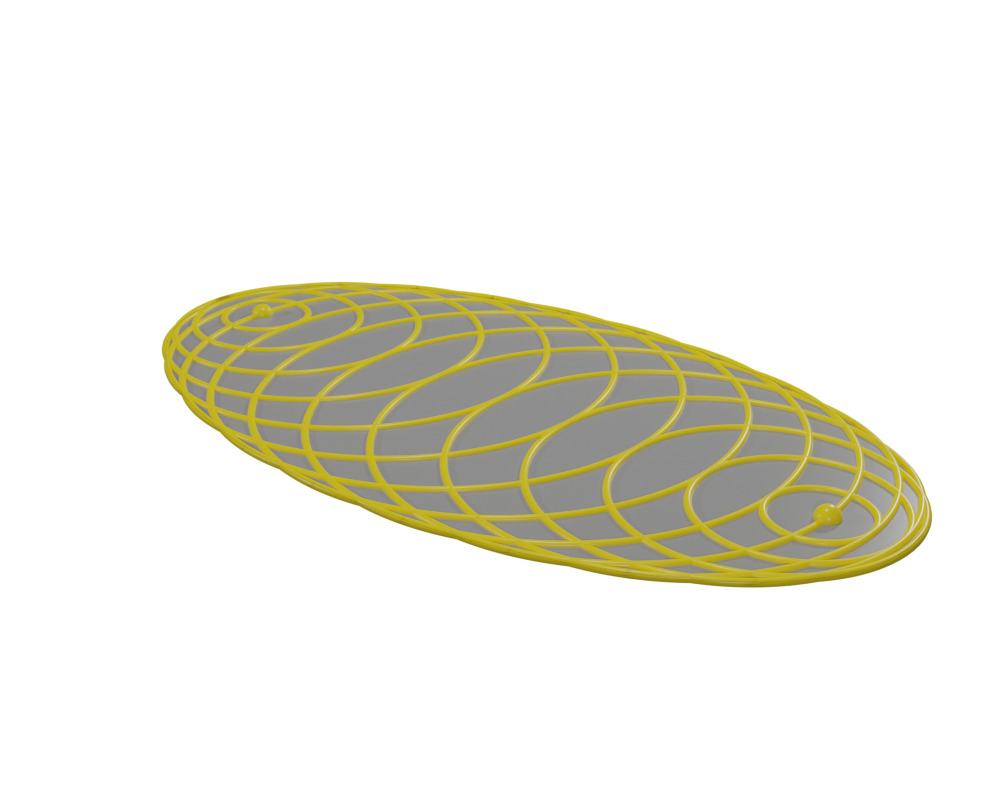}
  \includegraphics[width=0.24\textwidth, trim={50pt 0 50pt 0}, clip]{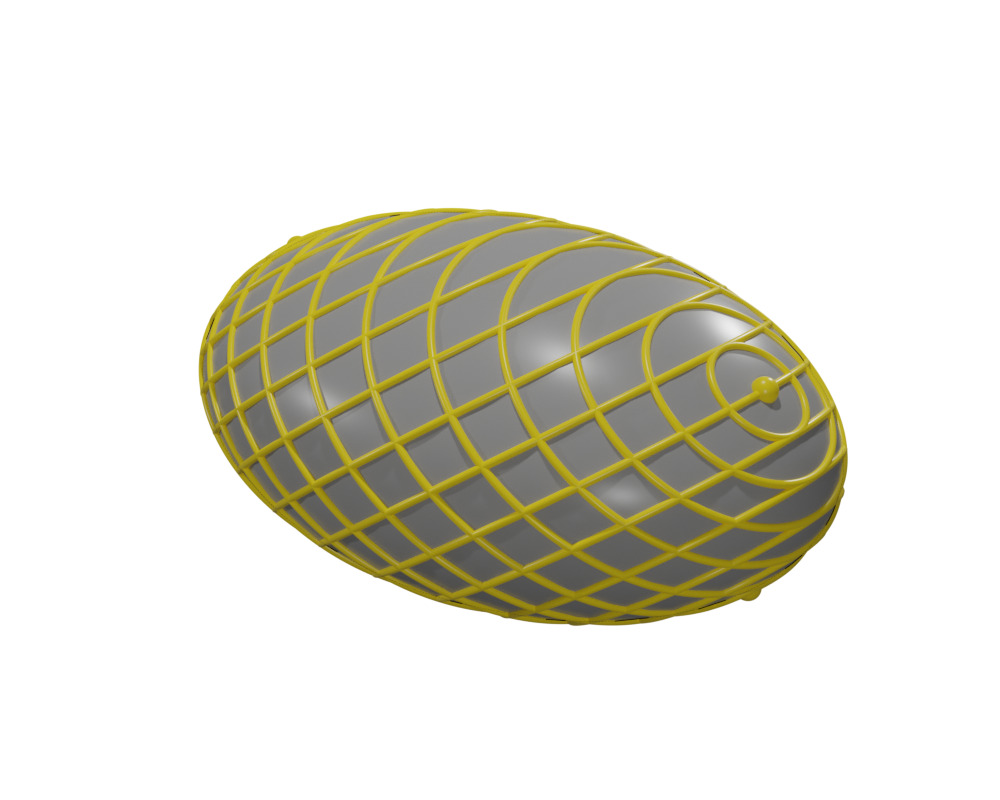}
  \includegraphics[width=0.24\textwidth, trim={50pt 0 50pt 0}, clip]{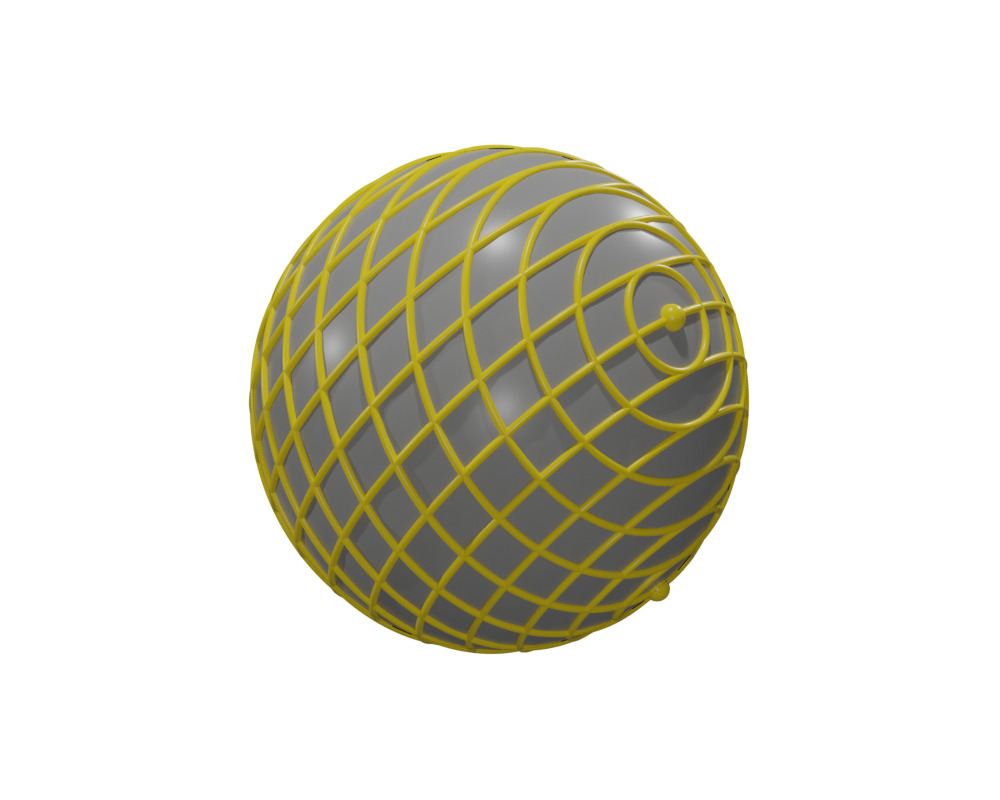}
  \includegraphics[width=0.24\textwidth, trim={50pt 0 50pt 0}, clip]{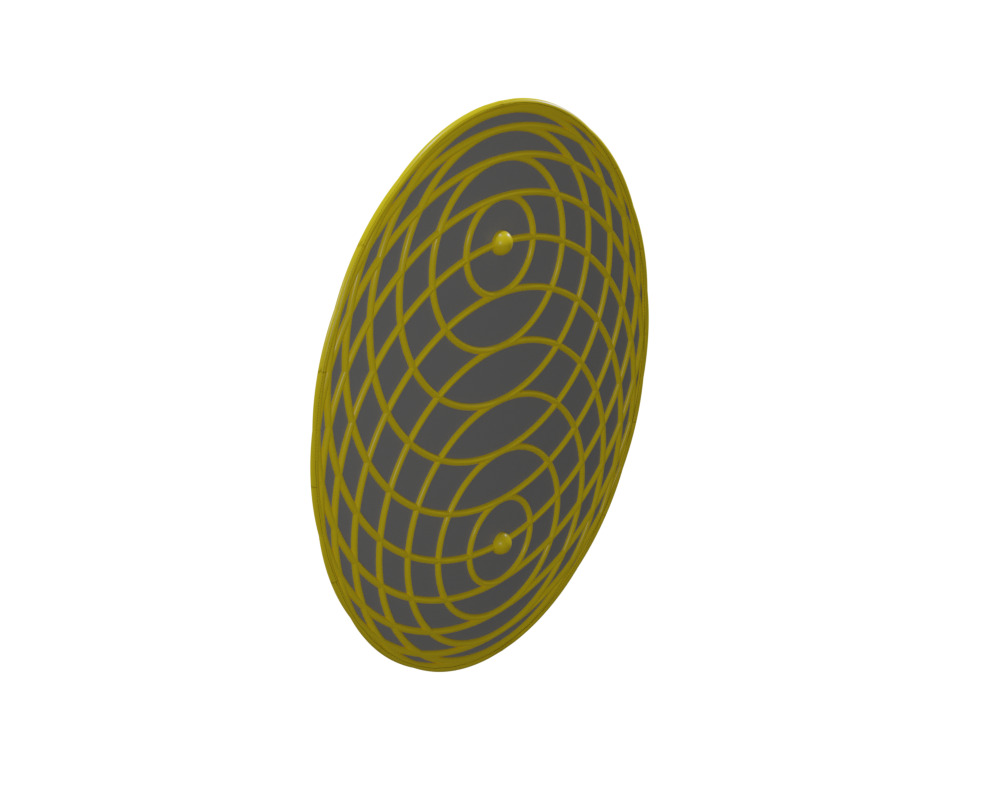}
  \caption{A one-parameter family of deformations of an ellipsoid which is isometric along the circular cross sections.}
  \label{fig:ellipsoid-family}
\end{figure}
\begin{figure}
  \centering
  \includegraphics[width=0.32\textwidth]{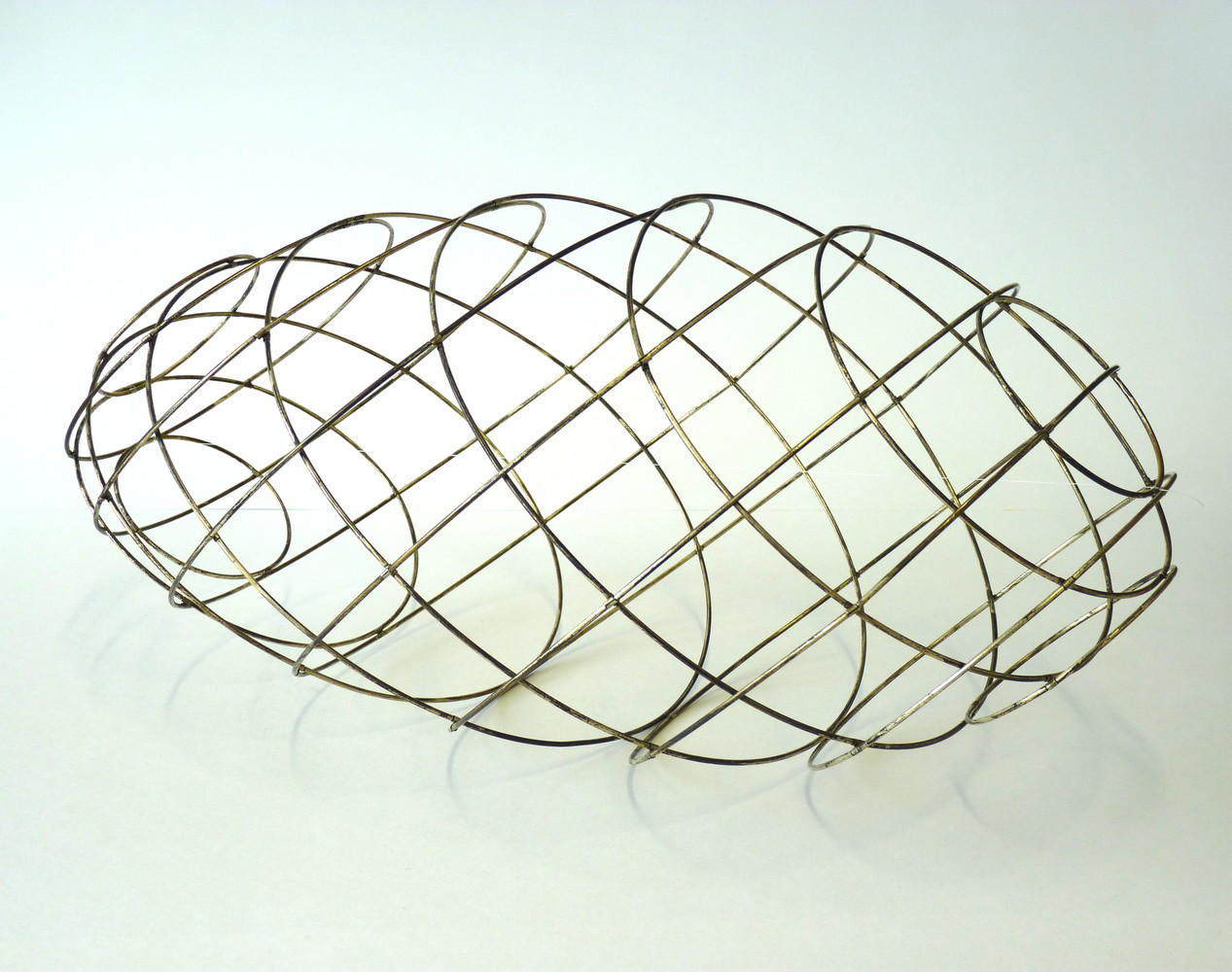}
  \includegraphics[width=0.32\textwidth]{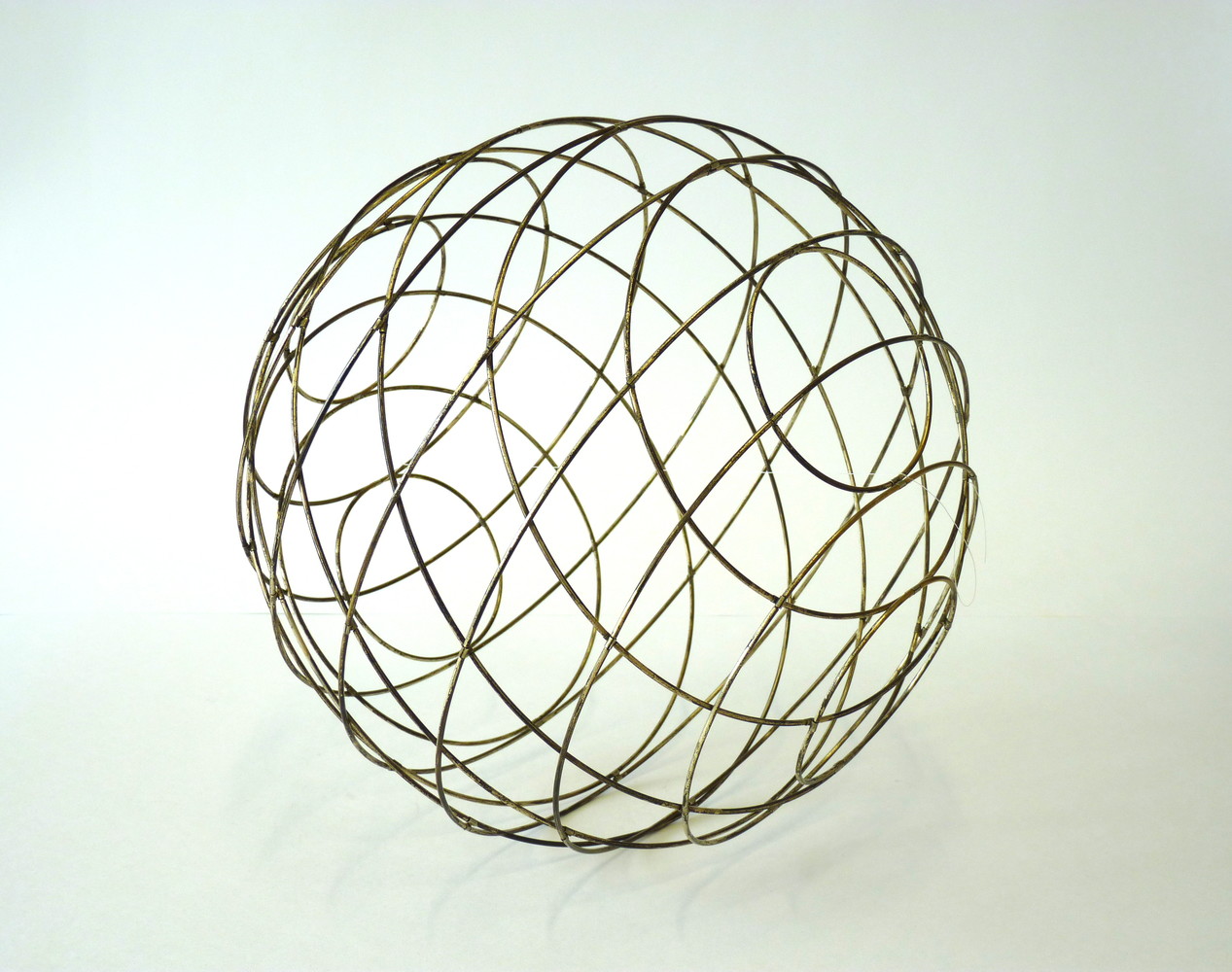}
  \includegraphics[width=0.32\textwidth]{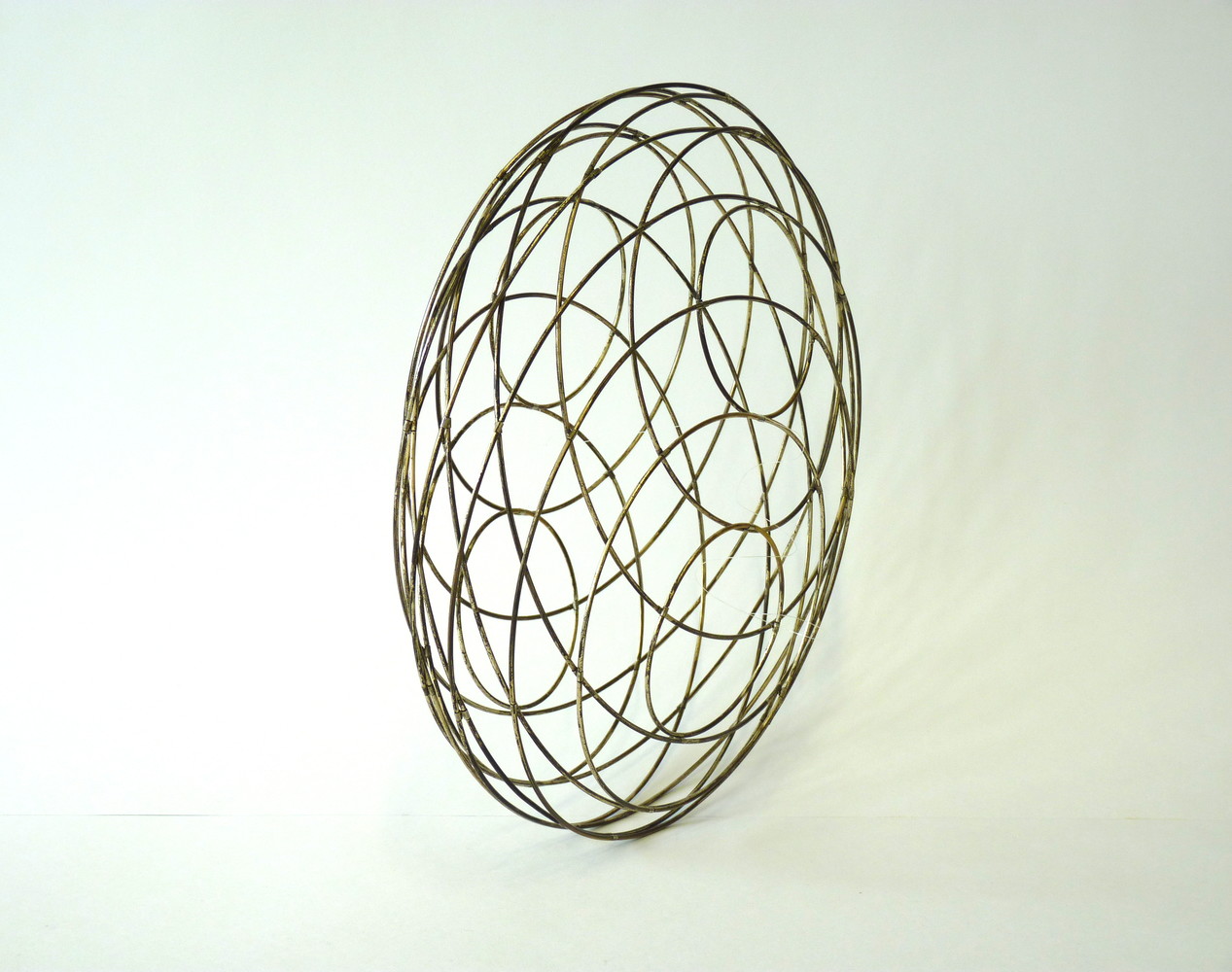}
  \caption{Photos of an ``isometrically'' deformable model of an ellipsoid from the collection of the Institute of Discrete Mathematics and Geometry, TU Wien.}
  \label{circular_wien}
\end{figure}
\begin{remark}
  A similar deformation also exists for one-sheeted hyperboloids,
  which in this case is isometric along the two one-parameter families of
  generators \cite{HilbertCohnVossen52, AkopyanBobenkoSchiefTechter21}.
  Here, the one-parameter family of deformations is given by the
  (unscaled) family of confocal hyperboloids.
\end{remark}

We collect the properties of ellipsoids related to its circular cross sections in the following theorem,
and later establish analogous properties for the discretization presented in Theorem~\ref{discretemaintheorem}.
\begin{figure}
  \centering
  \includegraphics[width=0.5\textwidth]{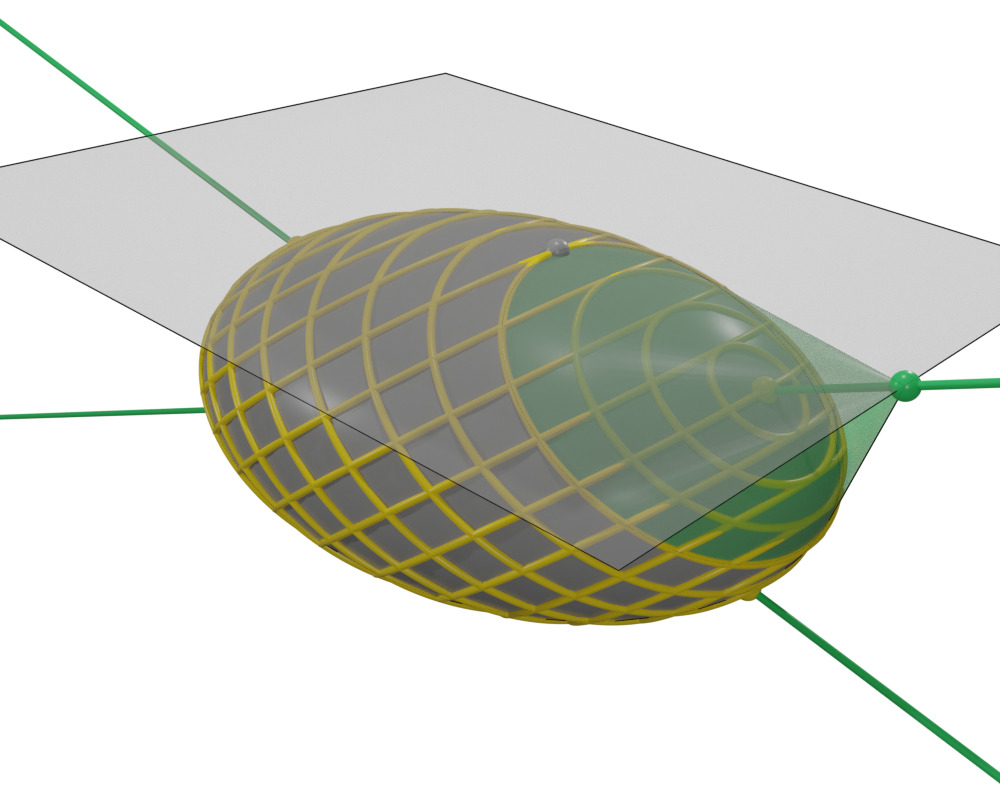}
  \caption{
    The tangent planes to an ellipsoid along a circular cross section meet at a point,
    which is the pole of the plane of the circular cross section.
    Equivalently, it is the apex of the touching cone enveloped by the tangent planes along the circular cross section.
    Furthermore, this point lies on one of the two lines passing through opposite umbilic points.
  }
  \label{fig:ellipsoid-tangent-cone}
\end{figure}
\begin{theorem}
  \label{thm:circular-cross-sections}
  Ellipsoids exhibit the following properties.
  \begin{enumerate}
  \item
    \label{thm:circular-cross-sections1}
    Any ellipsoid may be sliced into two one-parameter families of circles
    by taking the intersection with planes which are parallel to the tangent planes at the ellipsoid's umbilic points (see Figure~\ref{fig:circular-cross-sections-and-curvature-lines}, left).
  \item
    \label{thm:circular-cross-sections2}
    The tangent planes to an ellipsoid along any circular cross section meet at a point which lies on one of the two lines passing through opposite umbilic points (see Figure~\ref{fig:ellipsoid-tangent-cone}).
  \item
    \label{thm:circular-cross-sections3}
    The net composed of the two families of circular cross sections of an ellipsoid
    and the net consisting of its lines of curvature are mutually diagonal (see Figure~\ref{fig:circular-cross-sections-and-curvature-lines}, right).
  \item
    \label{thm:circular-cross-sections4}
    The two families of circular cross sections of an ellipsoid may be isometrically deformed in such a manner
    that the deformed circles are congruent to the original circles and remain the circular cross sections of ellipsoids (see Figure~\ref{fig:ellipsoid-family}).
    
    Furthermore, any two ellipsoids from this family of deformations are confocal up to a scaling.
  \end{enumerate}
\end{theorem}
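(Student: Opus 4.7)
The plan is to treat the four statements in order, using the confocal representation (\ref{E3})--(\ref{E4}) together with classical pencil-of-quadrics and polarity arguments. For (\ref{thm:circular-cross-sections1}), I would write the ellipsoid as $x^2/A + y^2/B + z^2/C = 1$ (with $A>B>C$ the squares of the semi-axes) and pass to the pencil obtained by subtracting $(1/B)(x^2+y^2+z^2)$. The result is a degenerate quadratic form in $x$ and $z$ alone which factors into two real linear forms, giving a pair of planes through the $y$-axis. Intersecting the ellipsoid with any plane parallel to one of these two distinguished planes therefore coincides with intersecting a concentric sphere with the same plane, so the section is a circle. Computing the umbilic points directly from the second fundamental form (they lie in the $xz$-plane, at the points where the principal curvatures coalesce) shows that the tangent planes at opposite umbilics are exactly the two distinguished planes.

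For (\ref{thm:circular-cross-sections2}), I would invoke the classical polarity fact that the tangent planes to a smooth quadric along a plane section envelope a cone whose apex is the pole of the section plane. The poles of any family of parallel planes therefore lie on a single line through the centre of the ellipsoid, namely the diameter conjugate to the common plane direction. That this conjugate diameter passes through the two opposite umbilic points follows from (\ref{thm:circular-cross-sections1}): the umbilics lie on the two distinguished planes through the centre, paired symmetrically about the centre, and each pair spans precisely the conjugate diameter to the other plane direction.

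For (\ref{thm:circular-cross-sections3}), I would specialise the representation (\ref{E3})--(\ref{E4}) on the ellipsoid $\{s_3 = \mathrm{const}\}$ using the trigonometric choice of $f_i, g_i, h_i$ introduced in \cite{AkopyanBobenkoSchiefTechter21}: with this choice, the coordinate triple $\br(s_1,s_2,s_3)$ along a curve $s_1 + s_2 = \mathrm{const}$ (respectively $s_1 - s_2 = \mathrm{const}$) lies in a plane whose normal is independent of the value of that constant, and this normal coincides with one of the two privileged directions produced in (\ref{thm:circular-cross-sections1}). Part (\ref{thm:circular-cross-sections1}) then guarantees that the section is a circle, and Theorem~\ref{thm:diagonal-curvature-lines} immediately yields diagonality to the curvature-line net $\{s_1 = \mathrm{const}\} \cup \{s_2 = \mathrm{const}\}$.

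For (\ref{thm:circular-cross-sections4}), I would move $s_3$ through the confocal family of ellipsoids and apply an $s_3$-dependent axial rescaling which keeps the semi-axis perpendicular to the plane of the umbilic focal hyperbola equal to a fixed length. Parts (\ref{thm:circular-cross-sections1})--(\ref{thm:circular-cross-sections3}) then transport along the family, and the radii of the sections can be computed from the factorisation of the pencil in (\ref{thm:circular-cross-sections1}). The main obstacle will be upgrading equality of radii to congruence of the parametrised circles: one must match corresponding arcs, not merely circles of equal radius. I expect this to follow from the observation that the chosen rescaling acts trivially on the diagonal parameters $s_1 \pm s_2$, so that an arc-length-type parametrisation of each circle is preserved along the deformation. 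Confocality up to scaling among the deformed ellipsoids is then automatic from the construction.
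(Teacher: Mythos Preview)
Your overall strategy matches the paper's almost exactly: the paper also splits the theorem into four propositions, uses polarity for \ref{thm:circular-cross-sections2}, the trigonometric specialisation of \eqref{E3}--\eqref{E4} for \ref{thm:circular-cross-sections3}, and the confocal family rescaled to common middle semi-axis for \ref{thm:circular-cross-sections4}. For \ref{thm:circular-cross-sections1} you take the more elementary pencil-with-a-sphere route, whereas the paper argues via the absolute conic in $\CP^3$; both are standard and yield the same planes.

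There is, however, a genuine slip in your argument for \ref{thm:circular-cross-sections2}. You write that ``the umbilics lie on the two distinguished planes through the centre'' and that ``each pair spans precisely the conjugate diameter to the other plane direction.'' Neither assertion is correct in general: the four umbilic points $\bigl(\pm\sqrt{\alpha(\alpha-\beta)/(\alpha-\gamma)},\,0,\,\pm\sqrt{\gamma(\beta-\gamma)/(\alpha-\gamma)}\bigr)$ do \emph{not} lie on the central planes $\Pi_\pm(0)$ unless $\alpha+\gamma=2\beta$, and the diameter through a pair of opposite umbilics is conjugate to the \emph{same} family $\Pi_\pm$ (the one containing their tangent planes), not the other one. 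The paper's version of this step is cleaner and avoids the issue entirely: since each parallel family $\Pi_\pm$ contains the two tangent planes at a pair of opposite umbilics, those umbilics are themselves poles of planes in the family, and hence automatically lie on the polar line of the common line at infinity. That single observation replaces your appeal to where the umbilics sit relative to the central planes.

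For \ref{thm:circular-cross-sections4} your anticipated ``main obstacle'' is handled in the paper by showing directly that the axial rescaling restricts to an isometry on every plane of each family $\Pi_\pm$ (this is the content of the relation $\alpha(\beta-\gamma)\sigma_1^2+\gamma(\alpha-\beta)\sigma_3^2=\beta(\alpha-\gamma)$); congruence of the parametrised circles is then immediate, with no separate arc-length argument needed.
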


Each part of the theorem will be shown to be an implication of a more technical proposition.
Thus, the proof of this theorem is distributed throughout the remaining chapter.
While most of the claims are well known, we still provide proofs for the convenience of the reader
and in some cases because they constitute a suitable preparation for the discretization to be established.
Indeed, subsequently we give a natural discrete analogue for each of these propositions.

Let $\alpha > \beta > \gamma > 0$ and $\mathcal{E} \subset \R^3$ be the ellipsoid
\begin{equation}
  \label{eq:ellipsoid}
  \mathcal{E} : \quad \frac{x^2}{\alpha} + \frac{y^2}{\beta} + \frac{z^2}{\gamma} = 1.
\end{equation}
To prove part \ref{thm:circular-cross-sections1} of Theorem~\ref{thm:circular-cross-sections},
we give explicit equations for the circular cross sections of the ellipsoid $\mathcal{E}$.
For the convenience of the reader we provide a projective proof of the following classical statement in Appendix~\ref{sec:appendix-proof}.
\begin{proposition}
  \label{prop:ellipsoid-circular-sections}
  Let $\alpha > \beta > \gamma > 0$ and $\mathcal{E} \subset \R^3$ be the ellipsoid \eqref{eq:ellipsoid}.
  Then the two one-parameter families of parallel planes
  \begin{equation}
    \label{eq:ellipsoid-circular-sections}
    \Pi_\pm(\mu_\pm) : ~ \sqrt{\tfrac{1}{\beta} - \tfrac{1}{\alpha}} \, x \pm \sqrt{\tfrac{1}{\gamma} - \tfrac{1}{\beta}} \, z = \mu_\pm,\qquad
    \mu_\pm \in \left[ - \sqrt{\tfrac{\alpha-\gamma}{\beta}}, \sqrt{\tfrac{\alpha-\gamma}{\beta}} \right]
  \end{equation}
  are exactly the planes that intersect the ellipsoid $\mathcal{E}$ in circles (see Figure~\ref{fig:circular-cross-sections-and-curvature-lines}, left).
  For the values $\mu_\pm = \pm \sqrt{\frac{\alpha-\gamma}{\beta}}$, the circles degenerate to (umbilic) points,
  and the planes become the tangent planes at these points.
\end{proposition}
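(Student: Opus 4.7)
The plan is to treat the three pieces of the proposition separately: (i) every $\Pi_\pm(\mu_\pm)$ in the prescribed range meets $\mathcal{E}$ in a circle; (ii) no other plane does; (iii) at the extremal values of $\mu_\pm$ the intersection degenerates to an umbilic and the plane is tangent there. For (i), the key observation is the algebraic identity obtained by subtracting $\tfrac{1}{\beta}(x^2+y^2+z^2)$ from both sides of \eqref{eq:ellipsoid}, which yields
\[
  \left(\tfrac{1}{\gamma}-\tfrac{1}{\beta}\right)z^2 \;-\; \left(\tfrac{1}{\beta}-\tfrac{1}{\alpha}\right)x^2 \;=\; 1 \;-\; \tfrac{1}{\beta}\bigl(x^2+y^2+z^2\bigr).
\]
Writing $A = \sqrt{\tfrac{1}{\beta}-\tfrac{1}{\alpha}}$ and $C = \sqrt{\tfrac{1}{\gamma}-\tfrac{1}{\beta}}$, the left-hand side factors as $(Cz-Ax)(Cz+Ax)$. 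Restricting to the plane $\Pi_+(\mu_+) : Ax+Cz=\mu_+$ turns it into the affine expression $\mu_+(Cz-Ax)$, so the intersection $\mathcal{E}\cap\Pi_+(\mu_+)$ satisfies a sphere equation (confirmed by completing the square in $x$ and $z$) and is therefore a circle. The argument for $\Pi_-$ is identical up to a sign.

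For the converse (ii), I would adopt a projective standpoint. A plane $P$ cuts $\mathcal{E}$ in a circle if and only if the conic $P\cap\mathcal{E}$ passes through the two circular points at infinity of $P$, equivalently, the line at infinity of $P$ meets the absolute conic $x^2+y^2+z^2=0$ at two points that also lie on the ellipsoid at infinity $\tfrac{x^2}{\alpha}+\tfrac{y^2}{\beta}+\tfrac{z^2}{\gamma}=0$. Eliminating $y^2$ between these two conics at infinity reduces to $A^2 x^2 = C^2 z^2$, so their four common points lie on the pair of real lines $Ax\pm Cz=0$ in the plane at infinity. Any circle-cutting plane must therefore have its line at infinity equal to one of these lines, i.e.\ be parallel to $\Pi_+(0)$ or $\Pi_-(0)$, which is precisely the claim.

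For (iii) I would extract the centre and radius of the auxiliary sphere from the completed-square form of the sphere equation derived above, and compute the squared distance $d^2$ from this centre to the plane $\Pi_\pm(\mu_\pm)$. Using the identity $A^2+C^2 = (\alpha-\gamma)/(\alpha\gamma)$, one verifies that the cutting-circle radius $\sqrt{R^2 - d^2}$ vanishes exactly at $\mu_\pm = \pm\sqrt{(\alpha-\gamma)/\beta}$, so that $\mathcal{E}$ and $\Pi_\pm(\mu_\pm)$ meet in a single point. Convexity of $\mathcal{E}$ then forces $\Pi_\pm(\mu_\pm)$ to be the tangent plane at that point, and a direct comparison with the standard umbilic coordinates identifies the contact point as an umbilic of $\mathcal{E}$. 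The main obstacle I anticipate is in the projective converse: one has to verify carefully that the four points at infinity common to the absolute conic and the ellipsoid at infinity really form two pairs lying on the real lines $Ax\pm Cz=0$, and that no spurious circle-cutting plane can pick up its two circular points from a line at infinity that is not one of these two. This reduces, however, to the elementary observation that the two conics at infinity share exactly those four points, each non-degeneracy condition being visible from the strict inequalities $\alpha>\beta>\gamma>0$.
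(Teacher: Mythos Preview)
Your proposal is correct, and for the converse direction (ii) you are using essentially the same projective argument as the paper: both of you intersect the absolute conic with the conic at infinity of $\mathcal{E}$, obtain the four common points on the two real lines $Ax\pm Cz=0$ at infinity, and conclude that any circle-cutting plane must belong to one of the two parallel families $\Pi_\pm$.

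The genuine difference is in the forward direction (i). The paper treats both directions simultaneously via the projective characterisation (a planar section is a circle if and only if it passes through two points of the absolute conic), so the sufficiency of $\Pi_\pm$ drops out of the same computation. You instead give an independent, purely affine argument: subtracting $\tfrac{1}{\beta}(x^2+y^2+z^2)$ from the ellipsoid equation and factoring the left-hand side as $(Cz-Ax)(Cz+Ax)$ shows directly that on $\Pi_\pm(\mu_\pm)$ the ellipsoid equation reduces to a sphere equation. This is more elementary and more explicit, and it has the pleasant by-product that the auxiliary sphere is available for part (iii), where you can read off the centre and radius and compute when the section degenerates. The paper, by contrast, disposes of (iii) by simply stating that one verifies the poles of the extremal planes to be the umbilic points; your route makes this verification concrete. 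What the paper's uniform projective approach buys is economy: one computation of four intersection points yields both existence and uniqueness of the circular sections at once, without the need to produce an auxiliary sphere.
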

\begin{remark}
  For a non-spherical ellipsoid, the families of circular cross sections given by \eqref{eq:ellipsoid-circular-sections}
  are its only circular cross sections.
  In the limiting case in which the ellipsoid becomes an ellipsoid of revolution,
  the two families of circular cross sections coincide.  
\end{remark}

To prove part \ref{thm:circular-cross-sections2} of Theorem~\ref{thm:circular-cross-sections},
recall that the polar plane with respect to an ellipsoid of a point on that ellipsoid is the tangent plane at that point.
Furthermore, the tangent planes along any planar section intersect in a point, which is the pole of the plane of that planar section.
Thus, part \ref{thm:circular-cross-sections2} is a corollary of the following proposition.
\begin{proposition}
  \label{prop:ellipsoid-polar-points}
  Let $\alpha > \beta > \gamma > 0$ and $\mathcal{E} \subset \R^3$ be the ellipsoid \eqref{eq:ellipsoid}.
  Then, for each of the two families of planes $\Pi_\pm$, given by \eqref{eq:ellipsoid-circular-sections},
  the poles of the planes with respect to the ellipsoid $\mathcal{E}$ lie on a line
  passing through opposite umbilic points (see Figure~\ref{fig:ellipsoid-tangent-cone}).
\end{proposition}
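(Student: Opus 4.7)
The plan is to compute the locus of poles directly using the polarity with respect to $\mathcal{E}$, and then to identify the umbilic points as two special points in this locus.

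First I would rewrite the ellipsoid as $\br^{\top} M \br = 1$ with the diagonal matrix $M = \mathrm{diag}(1/\alpha, 1/\beta, 1/\gamma)$. The standard polarity then gives that the pole of a plane $\bN \cdot \br = d$ with $d \neq 0$ is $\br_{\mathrm{pole}} = \frac{1}{d} M^{-1} \bN = \frac{1}{d}(\alpha N_x, \beta N_y, \gamma N_z)$. Applied to the plane $\Pi_+(\mu)$ from \eqref{eq:ellipsoid-circular-sections}, whose normal has vanishing $y$-component, this yields
\begin{equation}
  \br_{\mathrm{pole}}(\mu) = \frac{1}{\mu}\, \bR_+, \qquad
  \bR_+ \mathrel{\mathop:}= \bigl(\alpha\sqrt{\tfrac{1}{\beta} - \tfrac{1}{\alpha}},\, 0,\, \gamma\sqrt{\tfrac{1}{\gamma} - \tfrac{1}{\beta}}\bigr),
\end{equation}
and an analogous expression for $\Pi_-(\mu)$ with a minus sign in the third component of $\bR_-$. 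As $\mu$ ranges over its interval (excluding $0$), the pole sweeps out precisely the line through the origin spanned by $\bR_\pm$.

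Second, I would identify the umbilic points as lying on that same line. By Proposition~\ref{prop:ellipsoid-circular-sections}, at the extreme values $\mu_\pm = \pm\sqrt{(\alpha-\gamma)/\beta}$ the plane $\Pi_+(\mu)$ degenerates to the tangent plane at an umbilic point, and a tangent plane is self-polar in the sense that its pole equals the point of tangency. Hence the two umbilic points associated with the family $\Pi_+$ are $\pm\,\bR_+/\sqrt{(\alpha-\gamma)/\beta}$, which are antipodal on the span of $\bR_+$ and therefore a pair of opposite umbilic points through which this line passes. Since the centre of the ellipsoid lies on every line joining antipodal points, all the poles — in particular both umbilic points — lie on the same line, proving the claim for the family $\Pi_+$; the family $\Pi_-$ is handled identically, giving the second pair of opposite umbilic points and a second line.

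I do not anticipate any real obstacle; the only minor thing to be careful about is the case $\mu = 0$, where the plane passes through the centre and its "pole" is a point at infinity, but this value is included in the interval from \eqref{eq:ellipsoid-circular-sections} and corresponds to the "largest" circular cross section passing through the centre. In the projective closure the pole at infinity still lies on the line spanned by $\bR_\pm$, so the statement remains correct; alternatively one may simply restrict to $\mu \neq 0$ since the finite poles already determine the line.
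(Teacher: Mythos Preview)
Your argument is correct. The paper, however, proves this proposition synthetically rather than by computation: since the planes in each family $\Pi_\pm$ are parallel, they form a pencil through a common line at infinity; polarity with respect to $\mathcal{E}$ is a projective correlation, so it sends this pencil of planes to a range of points on a single line (the polar line of that line at infinity). Proposition~\ref{prop:ellipsoid-circular-sections} then guarantees that two members of the pencil are tangent planes at opposite umbilic points, whose poles are the umbilic points themselves, so the polar line is the line through them. Compared to this, your approach is more hands-on: you obtain the explicit direction vector $\bR_\pm$ of the line, at the cost of a small coordinate computation and the need to treat $\mu=0$ separately. The paper's version avoids coordinates entirely and sidesteps the $\mu=0$ issue automatically, since in projective language the pole at infinity is just another point of the range; your version, on the other hand, yields the concrete form of the umbilic line, which can be convenient for later explicit checks.
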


\begin{proof}
  Projectively, a family of parallel planes intersect in a line at infinity.
  Thus, the family of poles of these planes lie on a line as well, namely the polar line.
  Since each of the two families of planes $\Pi_\pm$ contains two tangent planes, the poles of which are opposite umbilic points,
  the polar line is the line through this pair of opposite umbilic points.
\end{proof}
\begin{remark}
  The planes in $\Pi_\pm$ are exactly the planes from the corresponding parallel family that have non-empty intersection with $\mathcal{E}$.
  Thus, the poles are exactly the points on the line that are outside the ellipsoid.
\end{remark}

To prove part~\ref{thm:circular-cross-sections3} of Theorem~\ref{thm:circular-cross-sections},
we need a suitable curvature line parametrization of the ellipsoid $\mathcal{E}$.
To this end, we may temporarily identify $\mathcal{E}$
with the ellipsoid of a confocal family \eqref{E0} with $\lambda = 0$.
Hence, on replacing $a, b, c$ by $\alpha, \beta, \gamma$ in \eqref{E3} and setting $u_3(s_3) = 0$ 
we obtain a curvature line parametrization of $\mathcal{E}$, namely
\begin{equation}
  \label{eq:ellipsoid-curvature-line-parametrization}
  \begin{split}
    x(s_1,s_2) &= \sqrt{\frac{\alpha}{(\alpha - \beta)(\alpha-\gamma)}} \, f_1(s_1)f_2(s_2) \\
    y(s_1,s_2) &= \sqrt{\frac{\beta}{(\alpha - \beta)(\beta-\gamma)}} \, g_1(s_1)g_2(s_2) \\
    z(s_1,s_2) &= \sqrt{\frac{\gamma}{(\alpha - \gamma)(\beta-\gamma)}} \, h_1(s_1)h_2(s_2)
  \end{split}
\end{equation}
with
\begin{equation}
  \label{eq:ellipsoid-curvature-line-parametrization-f}
  \begin{aligned}
    f_1(s_1)^2 + g_1(s_1)^2 &= \alpha - \beta, & \quad f_1(s_1)^2 + h_1(s_1)^2 &= \alpha - \gamma\\
    f_2(s_2)^2 - g_2(s_2)^2 &= \alpha - \beta, & \quad f_2(s_2)^2 + h_2(s_2)^2 &= \alpha - \gamma.
  \end{aligned}
\end{equation}
Now part~\ref{thm:circular-cross-sections3} is a corollary of the following proposition,
which is based on a suitable choice of the functions $f_i$, $g_i$, $h_i$ in the above functional equations.

The derivation of a discrete analogue of the circular cross sections of ellipsoids is based on the discretization of this proposition.
We present a version of the proof recorded in \cite{AkopyanBobenkoSchiefTechter21}
which is custom-made for the translation into the discrete language.
The proof is constructive in that we will derive an explicit representation of the circular cross sections of individual ellipsoids.
In Proposition~\ref{prop:isometric-circles} and Theorem~\ref{thm:isometric-circles},
we embed this representation into a one-parameter family of ``isometric deformations''.

\begin{proposition}
  \label{prop:ellipsoid-diagonal-relation}
  Let $\alpha > \beta > \gamma > 0$ and $\mathcal{E} \subset \R^3$ be the ellipsoid \eqref{eq:ellipsoid}.
  Then, the two parametrizations
  \begin{equation}
    \label{eq:ellipsoid-diagonal-relation}
    \begin{split}
    \br &= (x,y,z) : [-s_1^0,s_1^0] \times [-s_2^0,s_2^0] \rightarrow \R^3\\[0.5em]
      x(s_1,s_2) &= \sqrt{\frac{\alpha(\alpha-\gamma)}{\alpha - \beta}} \, \sin s_1 \cos s_2 \\
      y(s_1,s_2) &= \pm \sqrt{\frac{\beta}{\beta-\gamma}} \, \sqrt{1 - \frac{\alpha-\gamma}{\alpha-\beta}\sin^2s_1} \, \sqrt{\frac{\alpha-\gamma}{\alpha-\beta}\cos^2s_2 - 1} \\
      z(s_1,s_2) &= \sqrt{\frac{\gamma(\alpha-\gamma)}{\beta-\gamma}} \, \cos s_1 \sin s_2
    \end{split}
  \end{equation}
  with
  \[
    s_1^0 = \sin^{-1}\sqrt{\frac{\alpha-\beta}{\alpha-\gamma}}, \qquad s_2^0 = \cos^{-1}\sqrt{\frac{\alpha-\beta}{\alpha-\gamma}}
  \]
  are curvature line parametrizations of the two halves $\mathcal{E} \cap \{ y \geq 0 \}$ and $\mathcal{E} \cap \{ y \leq 0 \}$
  of the ellipsoid,
  such that the curves
  \[
    s_1 \pm s_2 = \mbox{const}
  \]
  are the circular cross sections (see Figure~\ref{fig:circular-cross-sections-and-curvature-lines}, right).
\end{proposition}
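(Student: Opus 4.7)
The plan is to split the proof into two stages, each of which reduces to a result already at hand. First, I would identify \eqref{eq:ellipsoid-diagonal-relation} as a particular realization of the general curvature line parametrization \eqref{eq:ellipsoid-curvature-line-parametrization}, which delivers the curvature line claim for free. Second, I would verify directly that the diagonal curves $s_1 \pm s_2 = \mbox{const}$ lie in the planes $\Pi_\pm$ of Proposition~\ref{prop:ellipsoid-circular-sections}, at which point the circularity follows instantly.

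For the first stage, the functional equations \eqref{eq:ellipsoid-curvature-line-parametrization-f} contain in each row a Pythagorean-type identity $f_i^2 + h_i^2 = \alpha - \gamma$. I would therefore make the ansatz
\[
  f_1 = \sqrt{\alpha-\gamma}\sin s_1, \quad h_1 = \sqrt{\alpha-\gamma}\cos s_1, \quad f_2 = \sqrt{\alpha-\gamma}\cos s_2, \quad h_2 = \sqrt{\alpha-\gamma}\sin s_2,
\]
after which the remaining identities in \eqref{eq:ellipsoid-curvature-line-parametrization-f} force $g_1(s_1)^2 = (\alpha-\beta) - (\alpha-\gamma)\sin^2 s_1$ and $g_2(s_2)^2 = (\alpha-\gamma)\cos^2 s_2 - (\alpha-\beta)$. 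Substituting back into \eqref{eq:ellipsoid-curvature-line-parametrization} reproduces \eqref{eq:ellipsoid-diagonal-relation}, with the $\pm$ in the $y$-component arising from the sign of the product $g_1 g_2$ and accounting for the two halves $\{y \ge 0\}$ and $\{y \le 0\}$ of $\mathcal{E}$. The reality conditions $g_1^2, g_2^2 \ge 0$ translate directly into the stated parameter bounds $|s_1| \le s_1^0$ and $|s_2| \le s_2^0$.

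For the second stage, a short calculation using the sine addition formula yields
\[
  \sqrt{\tfrac{1}{\beta}-\tfrac{1}{\alpha}}\,x \pm \sqrt{\tfrac{1}{\gamma}-\tfrac{1}{\beta}}\,z = \sqrt{\tfrac{\alpha-\gamma}{\beta}}\,\bigl(\sin s_1 \cos s_2 \pm \cos s_1 \sin s_2\bigr) = \sqrt{\tfrac{\alpha-\gamma}{\beta}}\,\sin(s_1 \pm s_2),
\]
which is manifestly constant along the curves $s_1 \pm s_2 = \mbox{const}$; the coefficients in \eqref{eq:ellipsoid-diagonal-relation} have been tuned precisely so as to effect this cancellation. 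Hence each such curve lies in a plane from the family $\Pi_\pm$ of \eqref{eq:ellipsoid-circular-sections}, and Proposition~\ref{prop:ellipsoid-circular-sections} identifies its intersection with $\mathcal{E}$ as a circle. Since $s_1^0 + s_2^0 = \pi/2$ (a consequence of $\sin^{-1}t + \cos^{-1}t = \pi/2$), the quantity $\sqrt{(\alpha-\gamma)/\beta}\,\sin(s_1 \pm s_2)$ sweeps exactly the admissible range for $\mu_\pm$, so every circular cross section is realized.

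The main obstacle I anticipate is not any single computation but the domain bookkeeping: one must confirm that the closed rectangle $[-s_1^0, s_1^0] \times [-s_2^0, s_2^0]$, together with one of the two sign choices for $y$, parametrizes exactly one closed half of $\mathcal{E}$ in a one-to-one manner, with the four corners mapping to the four umbilic points and the four edges mapping onto the four arcs of the equatorial ellipse $\mathcal{E} \cap \{y=0\}$ between consecutive umbilics. This is a straightforward but fiddly orientation and injectivity check, and it is the step most prone to sign errors.
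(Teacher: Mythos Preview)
Your proposal is correct and follows essentially the same approach as the paper: the same trigonometric ansatz for $f_i,h_i$, the same derivation of $g_i$ from the residual functional equations, and the same verification via the sine addition formula that the diagonals land in the planes $\Pi_\pm$. The only cosmetic difference is order of exposition---the paper first derives the conditions \eqref{eq:diagonal-curvature-circular} that force the ansatz, whereas you posit the ansatz and then verify; your extra remark that $s_1^0+s_2^0=\pi/2$ yields the full $\mu_\pm$-range is a nice addition not made explicit in the paper.
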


\begin{proof}
  We show that there exist solutions $f_1, f_2, g_1, g_2, h_1, h_2$
  of the functional equations~\eqref{eq:ellipsoid-curvature-line-parametrization-f}
  such that the diagonal net given by the curves
  \[
    s_\pm = \frac{s_1 \pm s_2}{2} = \text{const}
  \]
  are the circular cross sections of the ellipsoid.
  By substituting the parametrization \eqref{eq:ellipsoid-curvature-line-parametrization} into the planes $\Pi_\pm(\mu_\pm)$ given by \eqref{eq:ellipsoid-circular-sections}, we obtain
  \begin{equation}
    \label{eq:ellipsoid-parametrization-into-planes}
    f_1(s_1)f_2(s_2) \pm h_1(s_1)h_2(s_2) = \sqrt{\beta(\alpha-\gamma)}\,\mu_\pm.
  \end{equation}
  For the diagonal lines $s_\pm = \text{const}$ to lie in these planes,
  the right-hand side of equation \eqref{eq:ellipsoid-parametrization-into-planes} must be a function only depending on $s_+$ or $s_-$, respectively.
  Thus, the functions $f_1, f_2, h_1, h_2$ are subject to the four equations
  \begin{equation}
    \label{eq:diagonal-curvature-circular}
    \begin{aligned}
      f_1^2(s_1) + h_1^2(s_1) &= \alpha - \gamma\\
      f_2^2(s_2) + h_2^2(s_2) &= \alpha - \gamma\\
      f_1(s_1)f_2(s_2) \pm h_1(s_1)h_2(s_2) &= \tilde\mu_\pm\left(\tfrac{s_1 \pm s_2}{2}\right),
    \end{aligned}
  \end{equation}
  where $\tilde\mu_\pm$ are some functions of their indicated argument only.
  Due to the trigonometric addition theorems
  \[
    \sin s_1 \cos s_2 \pm \cos s_1 \sin s_2 = \sin(s_1 \pm s_2),
  \]
  these equations are readily shown to admit the solutions
  \begin{equation}
    \label{eq:f-solution}
    \begin{aligned}
      f_1(s_1) &= \sqrt{\alpha - \gamma}\sin s_1 , &  f_2(s_2) &= \sqrt{\alpha - \gamma}\cos s_2\\
      h_1(s_1) &= \sqrt{\alpha - \gamma}\cos s_1 , &  h_2(s_2) &= \sqrt{\alpha - \gamma}\sin s_2
    \end{aligned}
  \end{equation}
  with the functions $\tilde\mu_\pm$ given by
  \[
    \tilde\mu_\pm(s_\pm) = (\alpha - \gamma)\sin (2 s_\pm).
  \]
  The functions $g_1$ and $g_2$ are then obtained from
  \begin{equation}
    \label{eq:ellipsoid-g-equations}
    \begin{aligned}
      g_1(s_1)^2 &= \alpha - \beta - (\alpha - \gamma)\sin^2s_1,\\
      g_2(s_2)^2 &= (\alpha-\gamma)\cos^2s_2 - \alpha + \beta.
    \end{aligned}
  \end{equation}
  The right-hand sides are non-negative as long as
  \[
    \sin^2s_1 \leq \frac{\alpha - \beta}{\alpha - \gamma}, \qquad
    \cos^2s_2 \geq \frac{\alpha - \beta}{\alpha - \gamma},
  \]
  which is satisfied for $(s_1, s_2) \in [-s_1^0,s_1^0] \times [-s_2^0,s_2^0]$.
  We note that $s_1^0, s_2^0 > 0$ since
  \[
    0 < \frac{\alpha - \beta}{\alpha - \gamma} < 1.
  \]
  Also note that, for the values $s_1^0$ and $s_2^0$, the boundary of the halves of the ellipsoid at $y=0$ is reached,
  which ensures that each of the two parametrizations covers the entire half, respectively.
\end{proof}
By Proposition~\ref{prop:ellipsoid-circular-sections}, the circular cross sections of an ellipsoid lie in two families of parallel planes.
It is known that any pair of families of parallel planes may be continuously deformed into any other such pair by a unique one-parameter family of affine transformations whose restrictions to the planes are isometries.
Applied to the two families of parallel planes containing the circular cross sections of an ellipsoid, this deformation preserves the circles and, being affine, maps the ellipsoid to another ellipsoid (see Figure~\ref{fig:affine-isometric}).
In particular, this means that one can construct a deformable model of an ellipsoid by interlocking slotted disks (see Figure~\ref{fig:slotted-disks}).
\begin{figure}
  \centering
  \includegraphics[width=0.38\textwidth]{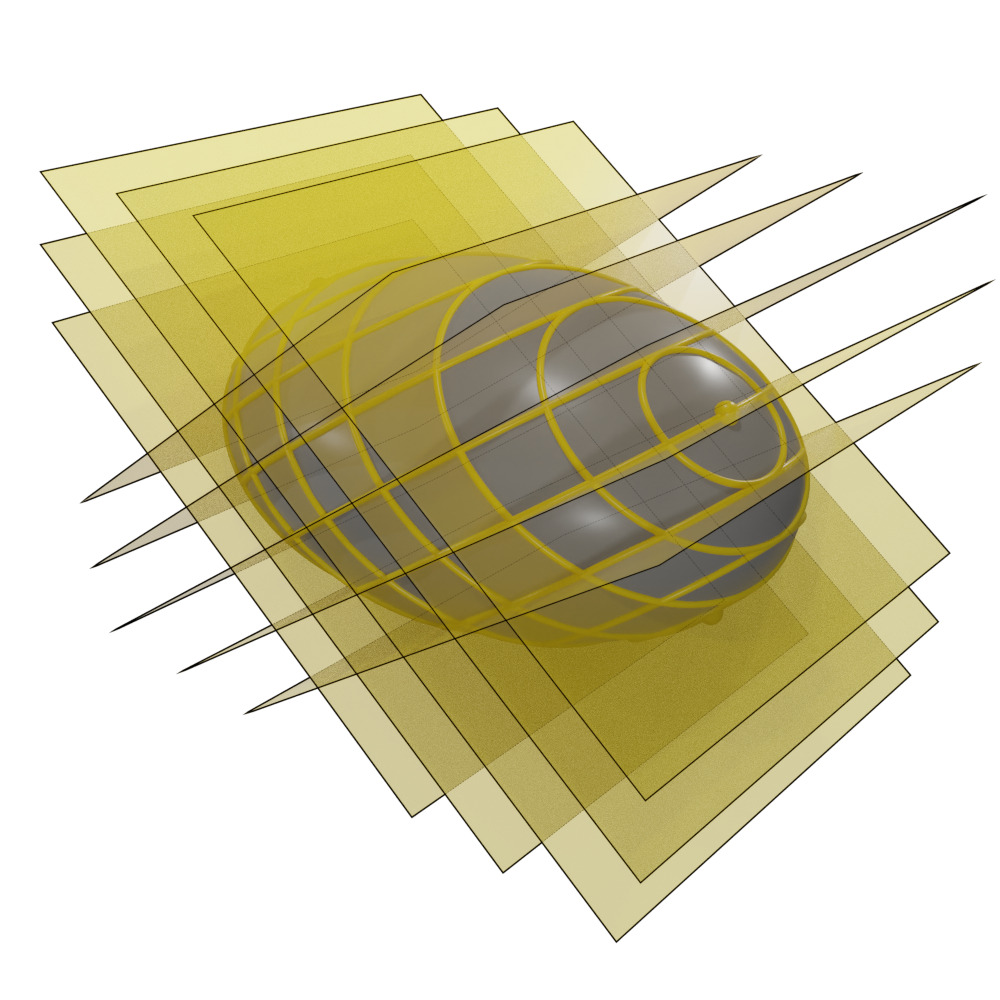}
  \hspace{0.5cm}
  \includegraphics[width=0.38\textwidth]{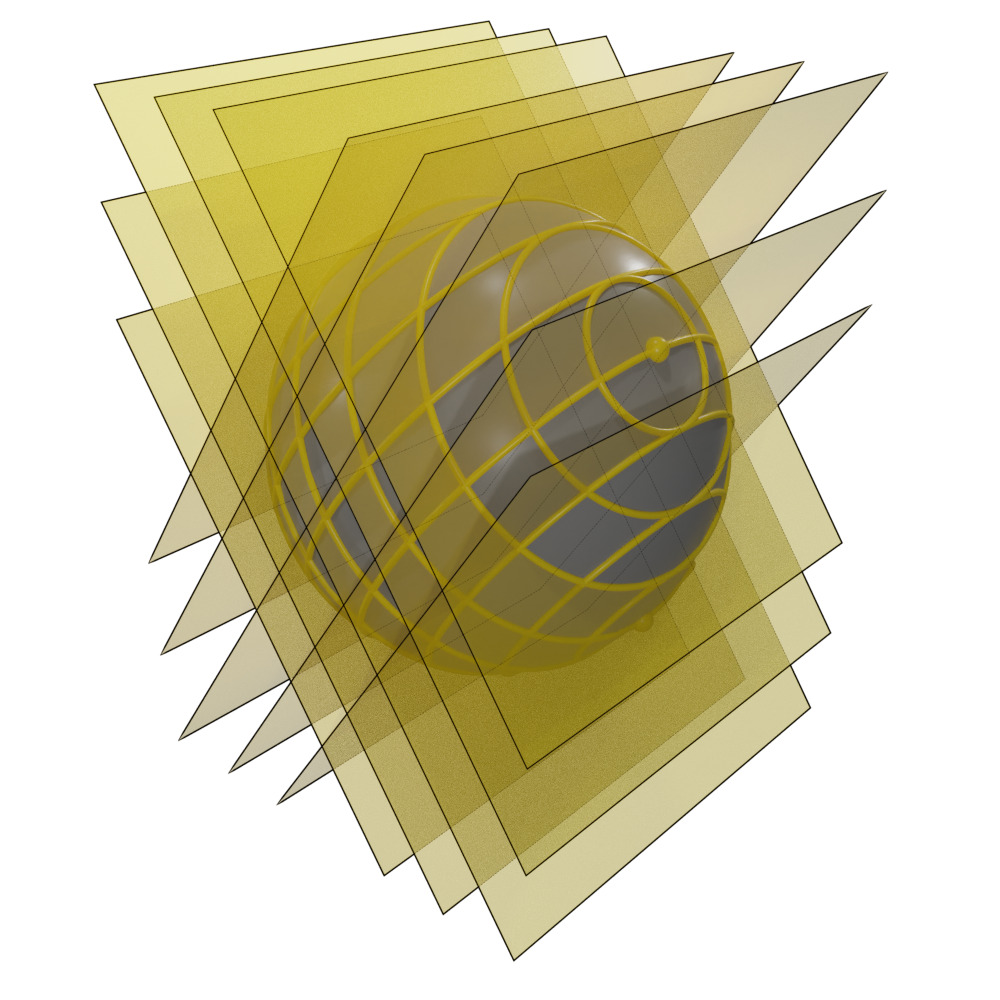}
  \caption{
    An affine transformation which is isometric on two families of parallel planes
    containing the circular cross sections of an ellipsoid.
  }
  \label{fig:affine-isometric}
\end{figure}
\begin{figure}
  \centering
  \includegraphics[width=0.38\textwidth]{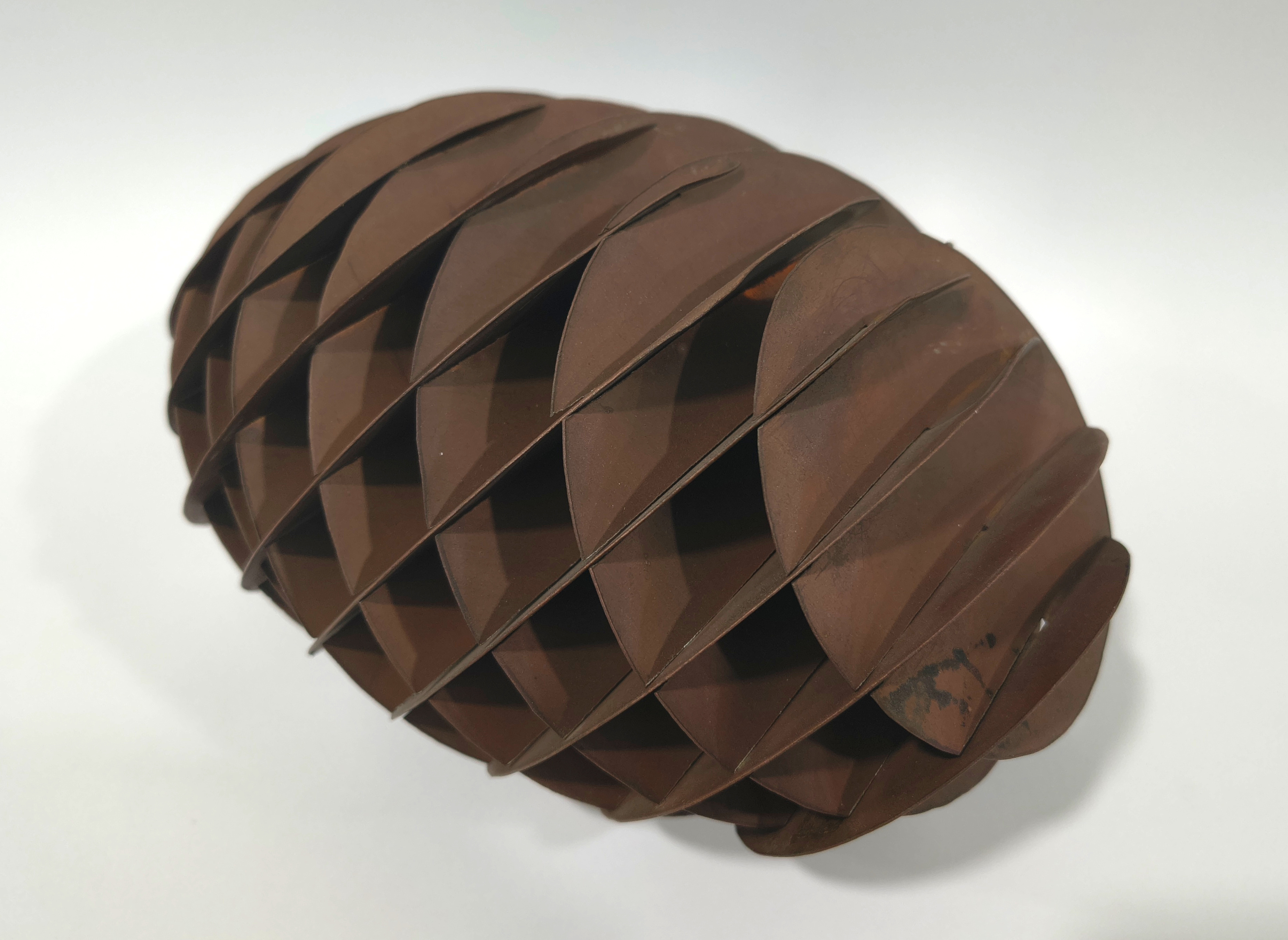}
  \hspace{0.5cm}
  \includegraphics[width=0.38\textwidth]{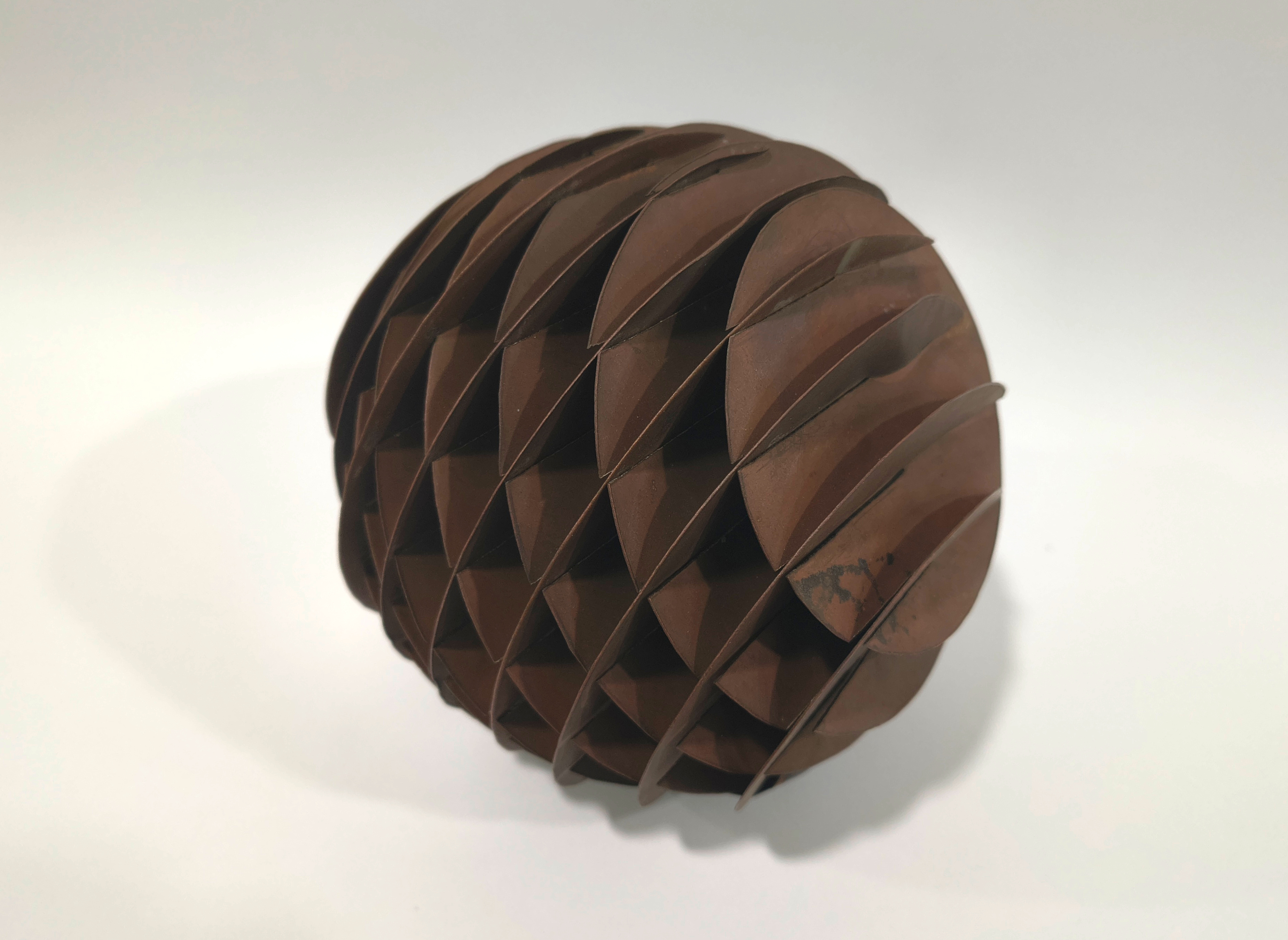}
  \caption{
    Photos of an ``isometrically'' deformable model of an ellipsoid made from slotted metal disks.
    UNSW Sydney.
  }
  \label{fig:slotted-disks}
\end{figure}

To prove part~\ref{thm:circular-cross-sections4} of Theorem~\ref{thm:circular-cross-sections},
we provide the explicit family of affine transformations which is isometric on the circular cross sections of a given ellipsoid.
\begin{proposition}
  \label{prop:isometric-circles}
  Let $\alpha > \beta > \gamma > 0$ and $\mathcal{E} \subset \R^3$ be the ellipsoid \eqref{eq:ellipsoid}.
  Then, the one-parameter family of affine transformations
  \[
    (x, y, z) \mapsto (\sigma_1 x, y, \sigma_3 z)
  \]
  with $\sigma_1, \sigma_3 > 0$ such that
  \begin{equation}
    \label{eq:affine-circle-family}
    \alpha(\beta - \gamma)\sigma_1^2 + \gamma(\alpha-\beta)\sigma_3^2 = \beta(\alpha-\gamma)
  \end{equation}
  is isometric on the circular cross sections of $\mathcal{E}$ (see Figure~\ref{fig:affine-isometric}).
  Any two affine transforms of $\mathcal{E}$ constructed in this manner are confocal up to a scaling.
\end{proposition}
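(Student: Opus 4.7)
The strategy is to show that the affine map $T:(x,y,z)\mapsto(\sigma_1 x, y, \sigma_3 z)$ restricts to an isometry on each of the two families of circular-section planes $\Pi_\pm(\mu_\pm)$ provided by Proposition~\ref{prop:ellipsoid-circular-sections}. Under $T$, the ellipsoid $\mathcal{E}$ is sent to the ellipsoid $\tilde{\mathcal{E}}$ with semi-axes squared $\sigma_1^2\alpha,\beta,\sigma_3^2\gamma$; once the restriction to each $\Pi_\pm$ is an isometry, every circular cross section of $\mathcal{E}$ is mapped to a congruent planar curve lying on $\tilde{\mathcal{E}}$, which is therefore a congruent circular cross section of $\tilde{\mathcal{E}}$. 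This single step simultaneously delivers the preservation of the planes of circular sections, the congruence of the circles, and the fact that the image is again an ellipsoid carrying those circles as cross sections.

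To verify the isometry on $\Pi_\pm$, I would choose the orthonormal in-plane basis $e_1=(0,1,0)$ together with the unit vector $e_2$ in the $xz$-plane orthogonal to the plane normal $\bigl(\sqrt{\tfrac{1}{\beta}-\tfrac{1}{\alpha}},\,0,\,\pm\sqrt{\tfrac{1}{\gamma}-\tfrac{1}{\beta}}\bigr)$. Because $T$ fixes $e_1$ and preserves the $xz$-plane, the orthogonality $\langle Te_1,Te_2\rangle=0$ is automatic, so the only remaining requirement is $|Te_2|=1$. A direct computation of $|Te_2|^2$, followed by clearing denominators with a factor of $\alpha\beta\gamma$, reduces precisely to
\[
  \sigma_1^2\alpha(\beta-\gamma)+\sigma_3^2\gamma(\alpha-\beta)=\beta(\alpha-\gamma),
\]
which is the defining relation \eqref{eq:affine-circle-family}. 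At that point the isometric-deformation half of the proposition is complete.

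For the final assertion I would rationally parametrise the curve \eqref{eq:affine-circle-family} by
\[
  \sigma_1^2(\lambda)=\frac{\beta(\alpha+\lambda)}{\alpha(\beta+\lambda)},\qquad
  \sigma_3^2(\lambda)=\frac{\beta(\gamma+\lambda)}{\gamma(\beta+\lambda)},
\]
the verification of \eqref{eq:affine-circle-family} being a short cancellation after expansion. A subsequent rescaling of $T(\mathcal{E})$ by $\mu(\lambda)=\sqrt{\beta/(\beta+\lambda)}$ produces the ellipsoid with semi-axes squared $\alpha+\lambda,\beta+\lambda,\gamma+\lambda$, which is a member of the confocal family \eqref{E0}. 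Two deformations corresponding to parameters $\lambda_1,\lambda_2$ therefore become confocal after a common overall rescaling, giving the ``confocal up to a scaling'' statement.

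The only place where something could \emph{a priori} go wrong is the following. One might expect two independent conditions on $(\sigma_1,\sigma_3)$: one forcing the images $T(\Pi_\pm)$ to be circular-section planes of $\tilde{\mathcal{E}}$ in the sense of Proposition~\ref{prop:ellipsoid-circular-sections}, and another forcing $T|_{\Pi_\pm}$ to preserve length. The content of the proposition is that both collapse to the same single linear relation in $\sigma_1^2$ and $\sigma_3^2$; once this coincidence is observed, the remaining algebra is routine and the rational parametrisation by $\lambda$ simply packages the one-parameter family together with its confocal normalisation.
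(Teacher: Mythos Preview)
Your argument is correct and follows essentially the same line as the paper's proof: both verify that $T$ restricts to an isometry on each plane $\Pi_\pm$ by checking that the in-plane direction lying in the $xz$-plane has its length preserved (the paper does this via a specific point on the central circle, you via the unit vector $e_2$), and the computation in either case reduces to \eqref{eq:affine-circle-family}.

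The one genuine difference is in the confocality step. The paper argues directly that the two ellipsoids are confocal up to scaling if and only if there exists $\nu>0$ with $\alpha\sigma_1^2-\beta=\nu(\alpha-\beta)$ and $\beta-\gamma\sigma_3^2=\nu(\beta-\gamma)$, and eliminating $\nu$ gives back \eqref{eq:affine-circle-family}. Your rational parametrisation by $\lambda$ is more explicit and gives the same conclusion; note only that your rescaling factor should be $\sqrt{(\beta+\lambda)/\beta}$ rather than its reciprocal to land on semi-axes squared $\alpha+\lambda,\beta+\lambda,\gamma+\lambda$. Both routes are short; the paper's is marginally more direct, while yours has the advantage of identifying the deformation parameter with the confocal parameter from the outset.
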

\begin{proof}
  We now derive this affine transformation.
  Let $\sigma_1, \sigma_2, \sigma_3 > 0$ and consider the affine transformation
  \[
    \tilde x = \sigma_1 x, \quad
    \tilde y = \sigma_2 y, \quad
    \tilde z = \sigma_3 z
  \]
  that transforms the ellipsoid \eqref{eq:ellipsoid} into the ellipsoid
  \begin{equation}
    \label{eq:ellipsoid-affine-trafo}
    \tilde{\mathcal{E}} : \frac{\tilde x^2}{\sigma_1^2 \alpha} + \frac{\tilde y^2}{\sigma_2^2 \beta} + \frac{\tilde z^2}{\sigma_3^2 \gamma} = 1.
  \end{equation}
  The planes of the circular cross sections of $\mathcal{E}$ and $\tilde{\mathcal{E}}$ are orthogonal to the plane $y=0$.
  Consider the circular cross section $C$ of one of the two planes through the origin.
  Then, the radius of $C$ is given by $\sqrt{\beta}$.
  To preserve the radius of the circle we must have $\sigma_2 = 1$.
 
  Now, a point that lies on $C$ and in the symmetry plane $y=0$ satisfies
  \[
    x^2 + z^2 = \beta,\qquad
    \frac{x^2}{\alpha} + \frac{z^2}{\gamma} = 1,
  \]
  and, thus,
  \[
    x^2 = \alpha \frac{\beta - \gamma}{\alpha - \gamma}, \qquad
    z^2 = \gamma \frac{\alpha - \beta}{\alpha - \gamma}.
  \]
  The restriction of the affine transformation to the plane of $C$ constitutes an isometry only if
  \[
    x^2 + z^2 = \tilde x^2 + \tilde z^2,
  \]
  i.e.,
  \[
    \beta = \sigma_1^2 \alpha \frac{\beta - \gamma}{\alpha - \gamma} + \sigma_3^2 \gamma \frac{\alpha - \beta}{\alpha - \gamma},
  \]
  which is equivalent to \eqref{eq:affine-circle-family}.
  Finally, the two ellipsoids $\mathcal{E}$ and $\tilde{\mathcal{E}}$ are confocal up to a scaling
  if there exists a $\nu > 0$ such that
  \[
    \alpha \sigma_1^2 - \beta = \nu(\alpha - \beta),\qquad
    \beta - \gamma \sigma_3^2 = \nu(\beta - \gamma),
  \]
  which is again equivalent to \eqref{eq:affine-circle-family}.
\end{proof}

\begin{remark}
  Given an ellipsoid, one may show as above
  that the two-parameter family of affine transformations which map the ellipsoid to every confocal ellipsoid up to scaling
  is equal to the two-parameter family of affine transformations which deform the circular cross sections isometrically up to scaling.
  This family is given by
  \[
    A = \rm{diag}\left( \sigma_1, \sigma_2, \sigma_3 \right)
  \]
  with
  \[
    \alpha(\beta - \gamma)\sigma_1^2 + \gamma(\alpha-\beta)\sigma_3^2 = \beta(\alpha-\beta)\sigma_2^2.
  \]
  Accordingly, this family preserves both the circular cross sections and the curvature lines of an ellipsoid
  and, thus, their diagonal relation.
\end{remark}

As an extension to Proposition~\ref{prop:isometric-circles},
we now construct a curvature line parametrization of a one-parameter family of ellipsoids
which describes the isometric deformation of the circular cross sections.
For each single ellipsoid, the parametrization is similar to the one given in Proposition~\ref{prop:ellipsoid-diagonal-relation}
in the sense that the circular cross sections are given by the curves
\[
  s_1 \pm s_2 = \mbox{const}.
\]
The starting point is the family of confocal ellipsoids from \eqref{E1}
\begin{equation}
  \label{eq:confocal-ellipsoids}
  \frac{x^2}{u_3 + a} + \frac{y^2}{u_3 + b} + \frac{z^2}{u_3 + c} = 1, \quad
  u_3 > -c
\end{equation}
or, by \eqref{E4}, equivalently,
\[
  \frac{x^2}{f_3^2(s_3)} + \frac{y^2}{g_3^2(s_3)} + \frac{z^2}{h_3^2(s_3)} = 1.
\]
We scale each ellipsoid uniformly to obtain a family of ellipsoids with
common second semi-axis equal to 1:
\begin{equation}
  \frac{g_3^2(s_3)}{f_3^2(s_3)}x^2 + y^2 + \frac{g_3^2(s_3)}{h_3^2(s_3)}z^2 = 1.
\end{equation}
By virtue of \eqref{E5}, on introduction of the functions
\bela{E10}
\hat{f}_3(s_3) = \frac{f_3(s_3)}{g_3(s_3)}, \quad \hat{h}_3(s_3) = \frac{h_3(s_3)}{g_3(s_3)},
\ela
this family may be written as
\bela{E12}
\frac{x^2}{\hat{f}_3^2(s_3)} + y^2 + \frac{z^2}{\hat{h}_3^2(s_3)} = 1,
\ela
wherein the functions $\hat f_3$ and $\hat h_3$ constitute the semi-axes of the ellipsoids and are related by
\bela{E11}
(b-c)\hat{f}_3^2(s_3) + (a-b)\hat{h}_3^2(s_3) = a-c.
\ela
It is observed that, by virtue of \eqref{E4}$_5$, the original family is only defined for $\hat f_3^2(s_3) > 1$.
We drop this condition and consider the extended family of ellipsoids from now on.
\begin{lemma}
  \label{lem:ellispoid-isometric-circles}
  Let $a > b > c > 0$.
  Then, the one-parameter family of ellipsoids \eqref{E12} with \eqref{E11}
  consists of affine transforms of any one ellipsoid in the family which are isometric on the circular cross sections.
\end{lemma}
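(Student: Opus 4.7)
The plan is to reduce the lemma to Proposition~\ref{prop:isometric-circles} by exhibiting, for any two members of the family, an explicit affine transformation of the form $(x,y,z) \mapsto (\sigma_1 x, y, \sigma_3 z)$ relating them. Concretely, I would fix two parameter values $s_3$ and $s_3^\prime$ with associated data $(\hat f_3, \hat h_3)$ and $(\hat f_3^\prime, \hat h_3^\prime)$, and set $\sigma_1 = \hat f_3^\prime/\hat f_3$, $\sigma_3 = \hat h_3^\prime/\hat h_3$ (and $\sigma_2 = 1$, forced by the common second semi-axis). Since the semi-axes of each ellipsoid in the family are $\hat f_3, 1, \hat h_3$ along the coordinate axes, this rescaling manifestly maps the ellipsoid \eqref{E12} at $s_3$ onto the one at $s_3^\prime$.

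Next I would identify $\alpha = \hat f_3^2$, $\beta = 1$, $\gamma = \hat h_3^2$ (relabelling $x \leftrightarrow z$ when $\hat f_3^2 < 1 < \hat h_3^2$, so that the ordering hypothesis $\alpha > \beta > \gamma$ of Proposition~\ref{prop:isometric-circles} is met) and apply that proposition. The isometric condition \eqref{eq:affine-circle-family} then becomes the single algebraic requirement
\[
\hat f_3^2\bigl(1 - \hat h_3^2\bigr)\sigma_1^2 \;+\; \hat h_3^2\bigl(\hat f_3^2 - 1\bigr)\sigma_3^2 \;=\; \hat f_3^2 - \hat h_3^2.
\]
Substituting $\sigma_1 = \hat f_3^\prime/\hat f_3$ and $\sigma_3 = \hat h_3^\prime/\hat h_3$, this reduces to the cleaner identity
\[
\bigl(1 - \hat h_3^2\bigr)\,\hat f_3^{\prime\,2} \;+\; \bigl(\hat f_3^2 - 1\bigr)\,\hat h_3^{\prime\,2} \;=\; \hat f_3^2 - \hat h_3^2.
\]

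The remaining step, which I expect to be the only real technical point, is to verify this identity directly from the defining constraint \eqref{E11} applied at both $s_3$ and $s_3^\prime$. I would eliminate $\hat h_3^2$ and $\hat h_3^{\prime\,2}$ using the linear relation $\hat h_3^2 = \bigl((a-c) - (b-c)\hat f_3^2\bigr)/(a-b)$ (and analogously for the primed quantities). Both sides then become polynomials in $\hat f_3^2$ and $\hat f_3^{\prime\,2}$, and a short calculation shows each side collapses to $(a-c)(\hat f_3^2 - 1)/(a-b)$, so the identity follows. This argument places any two members of the family in the one-parameter family of affine deformations of Proposition~\ref{prop:isometric-circles}, proving the lemma.

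The main obstacle is essentially bookkeeping: one must check that the proof goes through on the \emph{extended} family (where the original restriction $\hat f_3^2 > 1$ is dropped, so one of $\alpha,\gamma$ in the identification may fall below $\beta = 1$). I would handle this by invoking the $x \leftrightarrow z$ symmetry of the configuration, under which Proposition~\ref{prop:isometric-circles} is manifestly invariant, so that the verification above applies uniformly along the whole curve defined by \eqref{E11}.
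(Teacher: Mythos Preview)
Your approach is correct and follows the same strategy as the paper: reduce to Proposition~\ref{prop:isometric-circles} by identifying $\alpha,\beta,\gamma$ and $\sigma_1,\sigma_3$ so that \eqref{E11} becomes \eqref{eq:affine-circle-family}. The only difference is in the choice of base ellipsoid. You take an arbitrary member $(\hat f_3,\hat h_3)$ of the family as the base, set $\alpha=\hat f_3^2$, $\beta=1$, $\gamma=\hat h_3^2$, and then must argue separately (via the $x\leftrightarrow z$ symmetry) when $\hat f_3^2<1$. The paper instead fixes the single canonical base $\alpha=a/b$, $\beta=1$, $\gamma=c/b$, $\sigma_1^2=(b/a)\hat f_3^2(s_3)$, $\sigma_3^2=(b/c)\hat h_3^2(s_3)$; since $a>b>c$ forces $\alpha>\beta>\gamma$ automatically, the ordering hypothesis of Proposition~\ref{prop:isometric-circles} is satisfied once and for all, and \eqref{E11} reduces to \eqref{eq:affine-circle-family} by a one-line substitution. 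This sidesteps precisely the bookkeeping you flag as the main obstacle, making the paper's version slightly cleaner, but the underlying argument is the same.
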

\begin{proof}
  Equations \eqref{E12} and \eqref{E11} are equivalent to \eqref{eq:ellipsoid-affine-trafo} and \eqref{eq:affine-circle-family}
  upon setting
  \[
    \alpha = \frac{a}{b}, \quad \beta = 1, \quad \gamma = \frac{c}{b},
    \quad
    \sigma_1^2 = \frac{b}{a}\hat f_3^2(s_3), \quad \sigma_3^2 = \frac{b}{c}\hat h_3^2(s_3).
  \]
\end{proof}
Again, by \eqref{E5}, the corresponding curvature line parametrizations of the ellipsoids
are obtained by scaling the confocal coordinate system \eqref{E3} with \eqref{E4} by the factor $\frac{1}{g(s_3)}$.
This still holds for the extended family of ellipsoids so that
\bela{E9}
\begin{aligned}
  x(s_1, s_2, s_3) &= \frac{f_1(s_1)f_2(s_2)\hat{f}_3(s_3)}{\sqrt{(a-b)(a-c)}}\\
  y(s_1, s_2, s_3) &= \frac{g_1(s_1)g_2(s_2)}{\sqrt{(a-b)(b-c)}}\\
  z(s_1, s_2, s_3) &= \frac{h_1(s_1)h_2(s_2)\hat{h}_3(s_3)}{\sqrt{(a-c)(b-c)}},
\end{aligned}
\ela
where the functions $f_i$, $g_i$ and $h_i$ are related by
\begin{equation}
  \label{eq:scaled-ellipsoid-f}
  \begin{aligned}
    f_1^2(s_1) + g_1^2(s_1) & = a-b,\quad & f_1^2(s_1) +h_1^2(s_1) & = a-c\\
    f_2^2(s_2) - g_2^2(s_2) & = a-b,\quad & f_2^2(s_2) +h_2^2(s_2) & = a-c
  \end{aligned}
\end{equation}
The following theorem is based on a suitable choice of the functions $f_i$, $g_i$, $h_i$, $\hat f_i$, $\hat h_i$
in the above functional equations.
\begin{theorem}
  \label{thm:isometric-circles}
  Let $a > b > c > 0$ and $\mathcal{E}(s_3) \subset \R^3$ be the one-parameter family of ellipsoids
  \begin{equation}
    \label{eq:deformation-family}
    \frac{b-c}{a-c}\frac{x^2}{\cos^2 s_3} + y^2 + \frac{a-b}{a-c}\frac{z^2}{\sin^2 s_3} = 1.
  \end{equation}
  Then, for each $s_3 \in [0, \pi)$, the two parametrizations
  \begin{equation}
    \label{eq:discrete-ellipsoids-parametrization-family}
    \begin{split}
      \br &= (x, y, z) : [-s_1^0,s_1^0] \times [-s_2^0,s_2^0] \rightarrow \R^3\\[0.5em]
      x(s_1,s_2,s_3) &= \frac{a-c}{\sqrt{(a-b)(b-c)}} \, \sin s_1 \cos s_2 \cos s_3 \\
      y(s_1,s_2,s_3) &= \pm \sqrt{\frac{1}{b-c}} \, \sqrt{1 - \frac{a-c}{a-b}\sin^2s_1} \, \sqrt{\frac{a-c}{a-b}\cos^2s_2 - 1} \\
      z(s_1,s_2,s_3) &= \frac{a-c}{\sqrt{(a-b)(b-c)}} \, \cos s_1 \sin s_1 \sin s_3
    \end{split}
  \end{equation}
  with
  \[
    s_1^0 = \sin^{-1}\sqrt{\frac{a-b}{a-c}}, \quad s_2^0 = \cos^{-1}\sqrt{\frac{a-b}{a-c}}
  \]
  are curvature line parametrizations of the two halves $\mathcal{E}(s_3) \cap \{ y \geq 0 \}$ and $\mathcal{E}(s_3) \cap \{ y \leq 0 \}$
  of the ellipsoid.
  For any two values of $s_3$, the two curves 
  \[
    s_1 + s_2 = \rm{const}
  \]
  are congruent circles,
  and so are the two curves
  \[
    s_1 - s_2 = \rm{const}.
  \]
  The one-parameter family of ellipsoids includes two planar degenerations
  corresponding to $s_3=0$ ($z=0$) and $s_3=\pi/2$ ($x=0$) and the unit sphere
  \bela{E32}
  x^2 + y^2 + z^2 = 1,\quad s_3 = s_3^\circ := \arctan\sqrt{\frac{a-b}{b-c}}
  \ela
  (see Figure~\ref{fig:ellipsoid-family}).
\end{theorem}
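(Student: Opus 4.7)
The plan is to instantiate the general framework set up in \eqref{E9}--\eqref{E11} by choosing explicit solutions to the functional equations that reproduce the trigonometric form \eqref{eq:discrete-ellipsoids-parametrization-family}, and then to inherit the geometric claims from Proposition~\ref{prop:ellipsoid-diagonal-relation} (for the curvature line/diagonal structure of each individual ellipsoid) and Lemma~\ref{lem:ellispoid-isometric-circles} (for the congruence of the circles across the family). Concretely, I would adopt exactly the ansatz \eqref{eq:f-solution}, with $\alpha,\beta,\gamma$ replaced by $a,b,c$, so that $f_1(s_1)=\sqrt{a-c}\sin s_1$, $h_1(s_1)=\sqrt{a-c}\cos s_1$, $f_2(s_2)=\sqrt{a-c}\cos s_2$, $h_2(s_2)=\sqrt{a-c}\sin s_2$, with $g_1,g_2$ determined up to sign by \eqref{eq:scaled-ellipsoid-f}. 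For the ``deformation factors'' $\hat f_3,\hat h_3$, I would set
\[
\hat f_3(s_3)=\sqrt{\tfrac{a-c}{b-c}}\cos s_3,\qquad \hat h_3(s_3)=\sqrt{\tfrac{a-c}{a-b}}\sin s_3,
\]
so that the constraint \eqref{E11} becomes the Pythagorean identity $\cos^2 s_3+\sin^2 s_3=1$ and is automatically satisfied.

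Next I would substitute these choices directly into \eqref{E9}. The algebraic factors collapse to exactly the prefactors appearing in \eqref{eq:discrete-ellipsoids-parametrization-family}, confirming that the stated formulas provide a valid parametrization within the extended family \eqref{E12}--\eqref{E11}. Substituting into the ellipsoid equation \eqref{eq:deformation-family} and using $\sin^2+\cos^2=1$ twice in the $s_1,s_2$ variables (via the definitions of $g_1^2,g_2^2$) verifies that the image lies on $\mathcal E(s_3)$. The fact that the $(s_1,s_2)$-parametrization is a curvature line parametrization of $\mathcal E(s_3)$ for each fixed $s_3$ follows because the map \eqref{E9} at fixed $s_3$ is obtained from the confocal parametrization \eqref{E3}--\eqref{E4} by a uniform $s_3$-dependent rescaling, which preserves the curvature line net; the admissible ranges $[-s_1^0,s_1^0]\times[-s_2^0,s_2^0]$ are dictated by positivity of the $g_i^2$, exactly as in the proof of Proposition~\ref{prop:ellipsoid-diagonal-relation}, and cover each half of $\mathcal E(s_3)$.

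For the congruence of the diagonal curves across different values of $s_3$, I would invoke Lemma~\ref{lem:ellispoid-isometric-circles}: since our one-parameter family is an instance of \eqref{E12}--\eqref{E11}, the affine transformations connecting any two of its members restrict to isometries on the planes containing the circular cross sections. Applying Proposition~\ref{prop:ellipsoid-diagonal-relation} to the fixed-$s_3$ ellipsoid, the curves $s_1\pm s_2=\mathrm{const}$ are precisely those circular cross sections, and hence are mapped to congruent circles under the deformation. This yields both the congruence statement and the identification of the diagonal net with the circular cross sections.

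Finally, the three distinguished members of the family are handled by direct substitution into \eqref{eq:deformation-family}: $s_3=0$ forces $z=0$ and $s_3=\pi/2$ forces $x=0$, giving the two planar degenerations, while for $s_3=s_3^\circ=\arctan\sqrt{(a-b)/(b-c)}$ one computes $\cos^2 s_3^\circ=(b-c)/(a-c)$ and $\sin^2 s_3^\circ=(a-b)/(a-c)$, upon which \eqref{eq:deformation-family} reduces to $x^2+y^2+z^2=1$. The argument is essentially algebraic throughout; the only conceptual step is recognizing that the isometric deformation property is already encoded in Lemma~\ref{lem:ellispoid-isometric-circles}, so that no separate verification of congruence is required. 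Consequently the main obstacle is really a bookkeeping one: ensuring simultaneously that the ansatz satisfies \eqref{eq:scaled-ellipsoid-f}, that the chosen $\hat f_3,\hat h_3$ satisfy \eqref{E11}, and that the normalization constants in \eqref{E9} combine to produce the prefactors displayed in \eqref{eq:discrete-ellipsoids-parametrization-family}.
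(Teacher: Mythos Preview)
Your proposal is correct and follows essentially the same route as the paper: the same trigonometric ansatz for $f_i,h_i$ and $\hat f_3,\hat h_3$, the same appeal to Lemma~\ref{lem:ellispoid-isometric-circles} for congruence, and the same direct check of the degenerate and spherical members. The only cosmetic difference is that the paper re-derives the plane-substitution condition \eqref{eq:ellipsoid-parametrization-into-planes2} explicitly (and notes the case $\hat f_3^2<1$ must be handled separately, where the semi-axis ordering $\alpha>\beta>\gamma$ fails) rather than invoking Proposition~\ref{prop:ellipsoid-diagonal-relation} as a black box, but this is a matter of presentation rather than substance.
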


\begin{proof}
  We show that there exist solutions $f_1, f_2, g_1, g_2, h_1, h_2$
  of the functional equations~\eqref{eq:scaled-ellipsoid-f}
  such that the diagonal net given by the curves
  \[
    s_\pm = \frac{s_1 \pm s_2}{2} = \text{const}
  \]
  consists of the circular cross sections of the ellipsoids.
  The congruence of the corresponding circles follows from Lemma~\ref{lem:ellispoid-isometric-circles}.
  By substituting the parametrization \eqref{E9} into the planes $\Pi_\pm(\mu_\pm)$ given by \eqref{eq:ellipsoid-circular-sections}
  with
  \[
    \alpha = \hat f_3^2(s_3), \quad
    \beta = 1, \quad
    \gamma = \hat h_3^2(s_3),
  \]
  we obtain
  \[
    \sqrt{\frac{\hat f_3^2(s_3) - 1}{(a - b)\hat f_3^2(s_3)}} f_1(s_1)f_2(s_2)\hat f_3(s_3) \pm \sqrt{\frac{1 - \hat h_3^2(s_3)}{(b - c)\hat h_3^2(s_3)}} h_1(s_1)h_2(s_2)\hat h_3(s_3) = \sqrt{a - c}\,\mu_\pm,
  \]
  where we have assumed that $\hat f_3^2(s_3) > 1$ so that $\alpha > \beta > \gamma$.
  The latter coincides with the assumption on which \eqref{eq:ellipsoid-circular-sections} has been derived.
  Applying \eqref{E11}, which is equivalent to
  \begin{equation}
    \label{E11-variation}
    \frac{\hat f_3^2(s_3) - 1}{a -b} = \frac{1 - \hat h_3^2(s_3)}{b-c},
  \end{equation}
  this may be formulated as
  \begin{equation}
    \label{eq:ellipsoid-parametrization-into-planes2}
    f_1(s_1)f_2(s_2) \pm h_1(s_1)h_2(s_2) = \tilde\mu_\pm.
  \end{equation}
  The case $\hat f_3^2(s_3) < 1$ may be dealt with in a similar manner and leads to the same result.
  For the diagonal lines $s_\pm = \text{const}$ to lie in these planes,
  the right-hand side of equation \eqref{eq:ellipsoid-parametrization-into-planes2}
  must be a function only depending on $s_+$ or $s_-$ respectively.
  Thus, the functions $f_1, f_2, h_1, h_2$ are subject to the four equations
  \begin{equation}
    \label{eq:diagonal-curvature-circular2}
    \begin{aligned}
      f_1(s_1)^2 + h_1(s_1)^2 &= a - c\\
      f_2(s_2)^2 + h_2(s_2)^2 &= a - c\\
      f_1(s_1)f_2(s_2) \pm h_1(s_1)h_2(s_2) &= \tilde\mu_\pm\left(\tfrac{s_1 \pm s_2}{2}\right),
    \end{aligned}
  \end{equation}
  where $\tilde\mu_\pm$ are some functions of their indicated argument only.
  These conditions coincide algebraically with those derived in the proof of Proposition~\ref{prop:ellipsoid-diagonal-relation} so that,
  in complete analogy, we may choose the solutions
  \[
    \begin{aligned}
      f_1(s_1) &= \sqrt{a-c}\sin s_1 , &  f_2(s_2) &= \sqrt{a-c}\cos s_2\\
      h_1(s_1) &= \sqrt{a-c}\cos s_1 , &  h_2(s_2) &= \sqrt{a-c}\sin s_2
    \end{aligned}
  \]
  with the functions $\tilde\mu_\pm$ given by
  \[
    \tilde\mu_\pm(s_\pm) = (a-c)\sin(2s_\pm)
  \]
  and the functions $g_1$ and $g_2$ being obtained from
  \[
    \begin{aligned}
      g_1(s_1)^2 &= a - b - (a - c)\sin^2 s_1\\
      g_2(s_2)^2 &= (a - c)\cos^2 s_2 - a + b.
    \end{aligned}
  \]
  The right-hand sides are non-negative as long as
  \[
    \sin^2s_1 \leq \frac{a-b}{a-c}, \qquad
    \cos^2s_2 \geq \frac{a-b}{a-c},
  \]
  which is satisfied for $(s_1, s_2) \in [-s_1^0,s_1^0] \times [-s_2^0,s_2^0]$.

  Finally, a solution of \eqref{E11} is given by
  \[
    \hat f_3(s_3) = \sqrt{\frac{a-c}{b-c}} \cos s_3,\qquad
    \hat h_3(s_3) = \sqrt{\frac{a-c}{a-b}} \sin s_3.
  \]
  Substitution into \eqref{E9} yields the stated parametrization.

  We observe that $z = 0$ if and only if $s_3 = 0$ and $x = 0$ if and only if $s_3 = \frac{\pi}{2}$.
  A member of the family \eqref{eq:deformation-family} becomes a sphere if and only if
  \[
    \frac{(a-c)\cos^2 s_3}{b-c} = \frac{(a-c)\sin^2 s_3}{a-b} = 1,
  \]
  which is solved by
  \[
    \tan^2 s_3 = \frac{a-b}{b-c}.
  \]
\end{proof}

\newpage
\section{Discrete confocal coordinate systems, ellipsoids as principal binets, and diagonal nets}

In \cite{BobenkoSchiefSurisTechter16,BobenkoSchiefSurisTechter18}, a canonical discretization of confocal coordinate systems and the associated quadrics has been introduced and examined extensively in connection with both their geometric and algebraic properties.
In this section, we summarize and adapt some of the conclusions which we then use for the discretization of circular cross sections of discrete ellipsoids presented in the next section.

\begin{figure}
  \centering
  \includegraphics[width=0.45\textwidth]{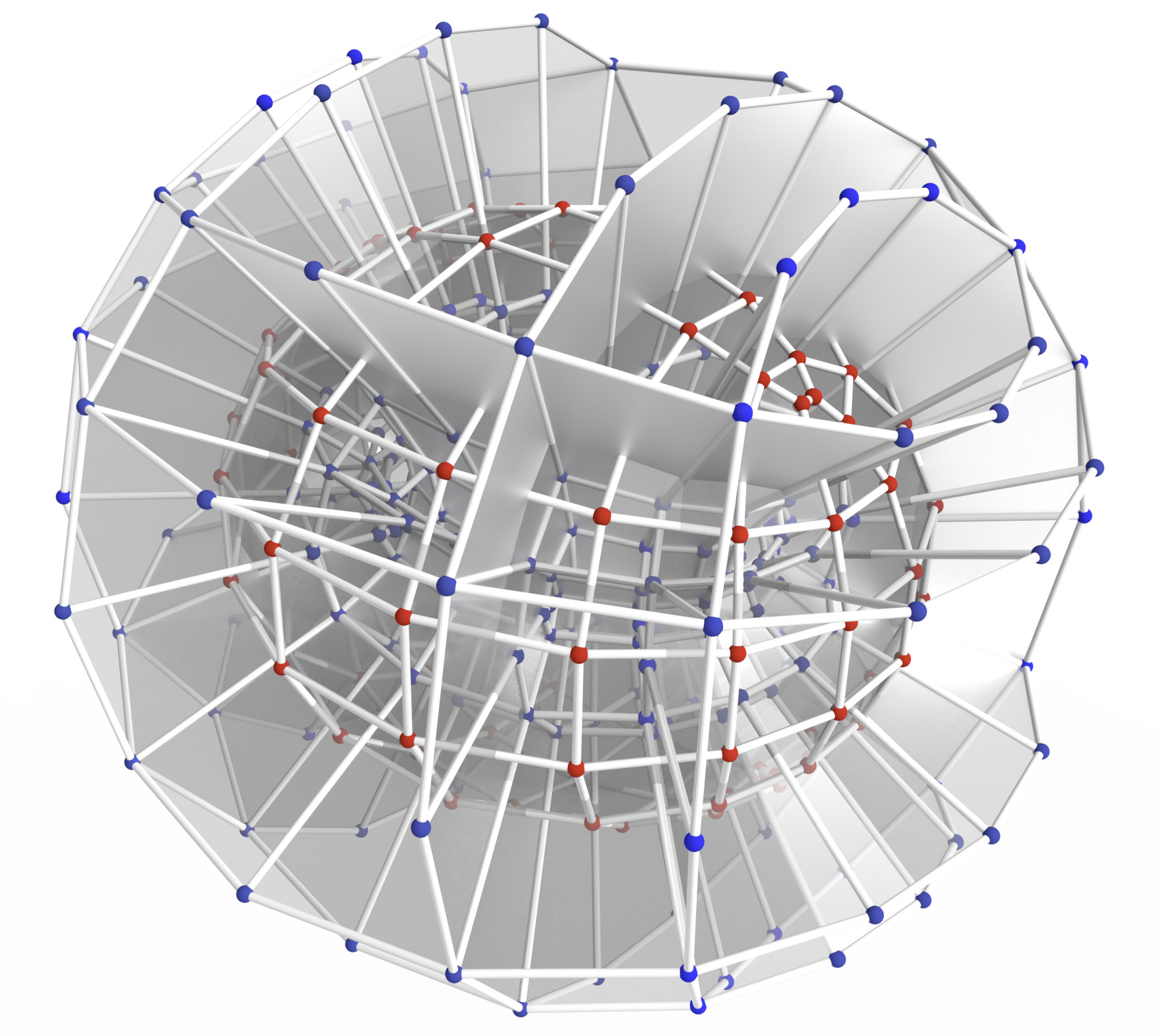}
  \hspace{1cm}
  \includegraphics[width=0.4\textwidth]{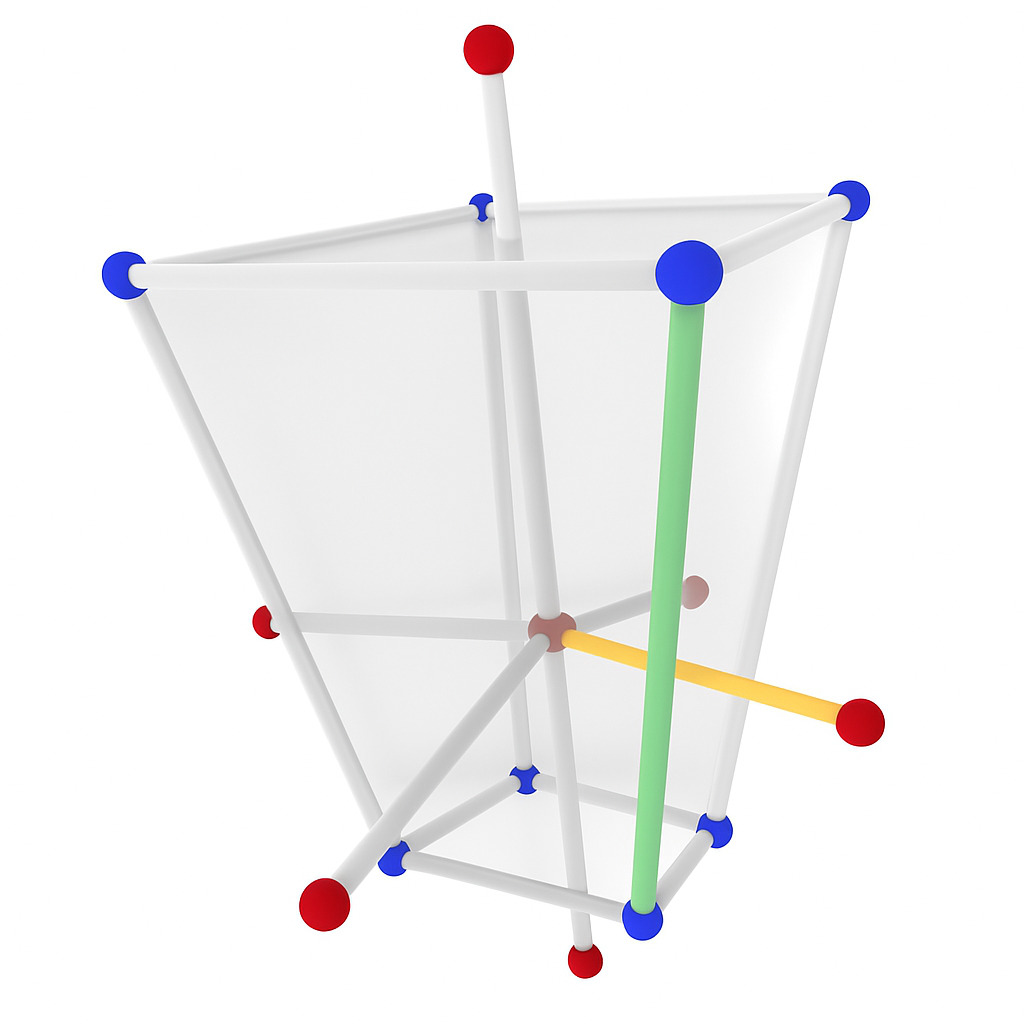}
  \caption{
    \emph{Left:} Part of the image of two dual sublattices $\Z^3$ and $\left(\Z^3\right)^* = \left(\Z+\tfrac{1}{2}\right)^3$ of a discrete confocal coordinate system.
    \emph{Right:} Elementary cube of $\Z^3$ and its dual edges. The green and yellow edges are an example of an orthogonal pair of edges.
  }
  \label{fig:dual-lattices}
\end{figure}

In the following, we require the algebraic notion of a ``discrete square'' which, however, has its origin in natural geometric considerations \cite{BobenkoSchiefSurisTechter18}. Thus, the discrete square of a function
\bela{F1}
  f : \tfrac{1}{2}\Z \rightarrow \R
\ela
is defined by
\bela{F2}
  f^\two(n) = f(n)f(n^+), \quad n^{\pm} = n \pm \tfrac{1}{2}. 
\ela

The discrete analogue $\br$ of a confocal coordinate system \eqref{E3}, \eqref{E4} is then given by a map
\bela{FF4}
\begin{aligned}
  \br &= (x,y,z) : \left(\tfrac{1}{2}\Z\right)^3 \supset\Omega \rightarrow\R^3\\[0.5em]
  x(n_1,n_2,n_3) &= \frac{f_1(n_1)f_2(n_2)f_3(n_3)}{\sqrt{(a-b)(a-c)}}\\
  y(n_1,n_2,n_3) &= \frac{g_1(n_1)g_2(n_2)g_3(n_3)}{\sqrt{(a-b)(b-c)}}\\
  z(n_1,n_2,n_3) &= \frac{h_1(n_1)h_2(n_2)h_3(n_3)}{\sqrt{(a-c)(b-c)}},
\end{aligned}
\ela
where the functions $f_i$, $g_i$ and $h_i$ are related by the functional equations
\bela{FF5}
 \begin{aligned}
  f_1^\two(n_1) + g_1^\two(n_1) & = a-b,\quad & f_1^\two(n_1) +h_1^\two(n_1) & = a-c\\
  f_2^\two(n_2) - g_2^\two(n_2) & = a-b,\quad & f_2^\two(n_2) +h_2^\two(n_2) & = a-c\\
  f_3^\two(n_3) - g_3^\two(n_3) & = a-b,\quad & f_3^\two(n_3) -h_3^\two(n_3) & = a-c
 \end{aligned}
\ela
and $a>b>c$ are constants.
It is important to emphasise that, originally, these discrete confocal coordinate systems have not been obtained by merely replacing in \eqref{E3}, \eqref{E4} squares of functions by discrete squares. On the contrary, the relations \eqref{FF4}, \eqref{FF5} encode the geometric construction of discrete confocal coordinate systems presented in \cite{BobenkoSchiefSurisTechter18}. In fact, we now summarise some of their important geometric properties.

If we focus on the subset of vertices of $\left(\frac{1}{2}\Z\right)^3$ given by the union of the vertices of $\Z^3$ and its dual $\left(\Z^3\right)^* \cong \left(\Z + \frac{1}{2}\right)^3$ (that is, the vertices of the body-centred cubic (bcc) lattice) then one may directly verify that each edge of the lattice $\br(\Z^3\cap\Omega)$ is orthogonal to the four dual edges of the lattice $\br(\left(\Z^3\right)^*\cap\Omega)$ and vice versa (see Figure~\ref{fig:dual-lattices}).

This property is the discrete analogue of the orthogonality of confocal coordinate systems and implies, in turn, that
the quadrilaterals of the two-dimensional sub-lattices $n_i=\mbox{const}$ of $\br(\Z^3\cap\Omega)$ and  $\br(\left(\Z^3\right)^*\cap\Omega)$ are planar. The latter is the defining property of an extensively used discrete analogue of conjugate coordinate systems on two-dimensional surfaces (see \cite{BobenkoSuris08} and references therein). In this connection, it is recalled that lines of curvature on two-dimensional surfaces are characterised by the property that they are both orthogonal and conjugate \cite{Eisenhart60}. Since we are concerned with continuous one-parameter families of discrete ellipsoids, we now perform the continuum limit $n_3\rightarrow s_3$, leading to semi-discrete confocal coordinate systems
\bela{F4}
\begin{aligned}
  \br &= (x,y,z) : \left(\tfrac{1}{2}\Z\right)^2\times\R \supset\Omega \rightarrow\R^3\\[0.5em]
  x(n_1,n_2,s_3) &= \frac{f_1(n_1)f_2(n_2)f_3(s_3)}{\sqrt{(a-b)(a-c)}}\\
  y(n_1,n_2,s_3) &= \frac{g_1(n_1)g_2(n_2)g_3(s_3)}{\sqrt{(a-b)(b-c)}}\\
  z(n_1,n_2,s_3) &= \frac{h_1(n_1)h_2(n_2)h_3(s_3)}{\sqrt{(a-c)(b-c)}},
\end{aligned}
\ela
where the functions $f_i$, $g_i$ and $h_i$ are related by the functional equations
\bela{F5}
 \begin{aligned}
  f_1^\two(n_1) + g_1^\two(n_1) & = a-b,\quad & f_1^\two(n_1) +h_1^\two(n_1) & = a-c\\
  f_2^\two(n_2) - g_2^\two(n_2) & = a-b,\quad & f_2^\two(n_2) +h_2^\two(n_2) & = a-c\\
  f_3^2(s_3) - g_3^2(s_3) & = a-b,\quad & f_3^2(s_3) -h_3^2(s_3) & = a-c.
 \end{aligned}
\ela
If we choose $U \subset \Z^2$ and $U^* \subset \left(\Z^2\right)^*$ where $\Omega \supset (U\cup U^*)\times I$, $I\subset\R$,
then for any fixed value of $s_3$, the map $\br$ restricted to $U \cup U^*$
represents a pair of polyhedral analogues of ellipsoids which are composed of planar quadrilaterals such that dual edges are orthogonal.
This is reflected in the discretization
\begin{equation}
  \label{F7}
  \begin{aligned}
    \frac{x(n_1,n_2,s_3)x(n_1^{\sigma_1},n_2^{\sigma_2},s_3)}{f_3^2(s_3)} &+
    \frac{y(n_1,n_2,s_3)y(n_1^{\sigma_1},n_2^{\sigma_2},s_3)}{g_3^2(s_3)} \\
    &+ \frac{z(n_1,n_2,s_3)z(n_1^{\sigma_1},n_2^{\sigma_2},s_3)}{h_3^2(s_3)} = 1,
    \qquad \sigma_1, \sigma_2 \in \{ +, - \}
  \end{aligned}
\end{equation}
of the one-parameter family of ellipsoids \eqref{E1}$_3$, which is satisfied by the parametrization \eqref{F4}, \eqref{F5}.
It is now convenient to adopt the following definition \cite{principal-binets}.
\begin{figure}
  \centering
  \begin{overpic}[width=0.9\textwidth]{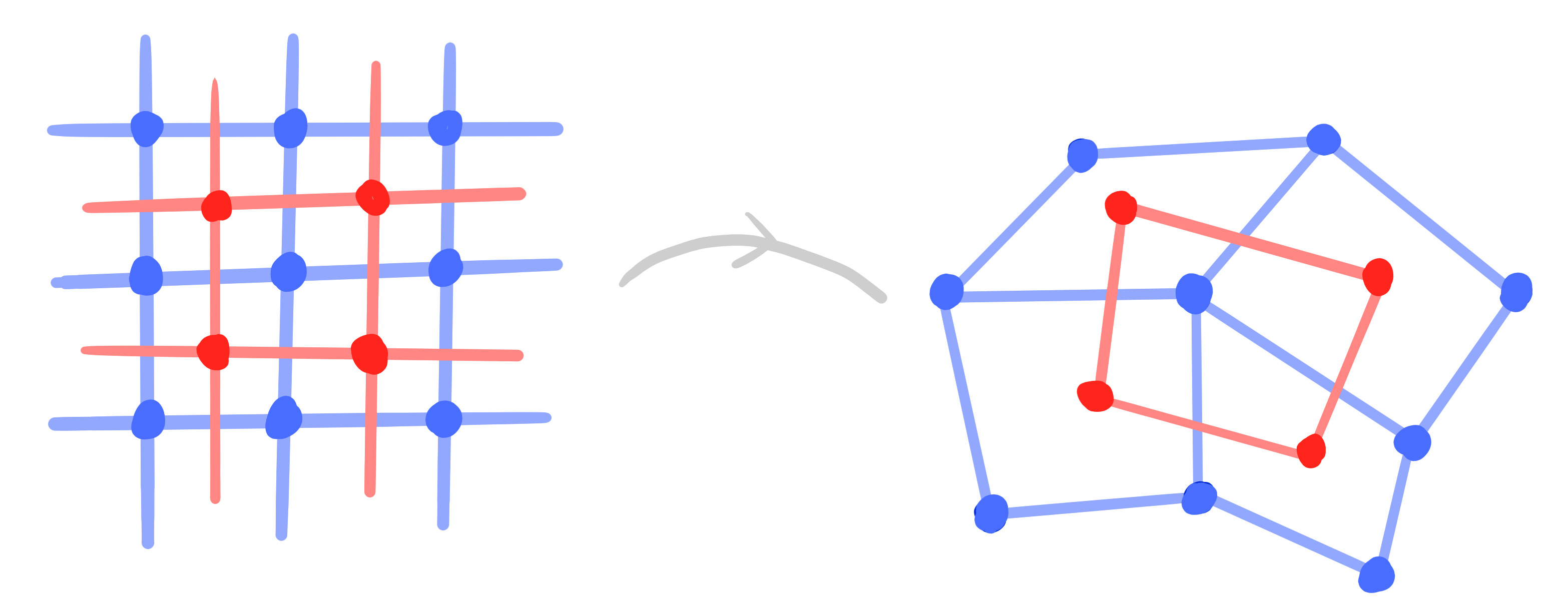}
    \put(3,0){$\textcolor{blue}{\Z^2} \cup \textcolor{red}{\left(\Z^2\right)^*}$}
    \put(95,32){$\R^3$}
    \put(48,25){$\br$}
    \put(5,13.5){\scriptsize\textcolor{red}{\contour{white}{$(n_1^-,n_2^-)$}}}
    \put(24.5,13.5){\scriptsize\textcolor{red}{\contour{white}{$(n_1^+,n_2^-)$}}}
    \put(5,26.8){\scriptsize\textcolor{red}{\contour{white}{$(n_1^-,n_2^+)$}}}
    \put(24.5,26.8){\scriptsize\textcolor{red}{\contour{white}{$(n_1^+,n_2^+)$}}}
    \put(19,18.5){\scriptsize\textcolor{blue}{\contour{white}{$(n_1,n_2)$}}}
    \put(59.5,11){\scriptsize\textcolor{red}{\contour{white}{$\br(n_1^-,n_2^-)$}}}
    \put(84.5,7.5){\scriptsize\textcolor{red}{\contour{white}{$\br(n_1^+,n_2^-)$}}}
    \put(61,26.5){\scriptsize\textcolor{red}{\contour{white}{$\br(n_1^-,n_2^+)$}}}
    \put(89,22){\scriptsize\textcolor{red}{\contour{white}{$\br(n_1^+,n_2^+)$}}}
    \put(77,17.5){\scriptsize\textcolor{blue}{\contour{white}{$\br(n_1,n_2)$}}}
  \end{overpic}
  \caption{A binet is a map $\Z^2 \cup \left(\Z^2\right)^* \rightarrow \R^3$.}
  \label{fig:binet}
\end{figure}
\begin{definition}
  A map
  \begin{equation}
    \label{eq:1}
    \br:\Z^2 \supset U \rightarrow \R^3
    \qquad\text{or}\qquad
    \br:\left(\Z^2\right)^* \supset U^* \rightarrow \R^3
  \end{equation}
  is called a \emph{discret net}.
  A pair of discrete nets
  \bela{F6}
  \br:\Z^2 \cup \left(\Z^2\right)^* \supset U \cup U^* \rightarrow \R^3
  \ela
  is called a \emph{binet} (see Figure~\ref{fig:binet}).
  A \emph{principal binet} is a binet for which the following two conditions are satisfied:
  \begin{enumerate}
  \item
    \label{def:principal-binet1}
    All elementary quadrilaterals $(\br(n_1^-,n_2^-), \br(n_1^+,n_2^-), \br(n_1^+,n_2^+), \br(n_1^-,n_2^+))$ are planar.
    We denote the corresponding planes by $\plane(n_1,n_2)$.
  \item
    All pairs of dual edges $(\br(n_1^-,n_2),\br(n_1^+,n_2))$ and $(\br(n_1,n_2^-),\br(n_1,n_2^+))$ are orthogonal.
  \end{enumerate}
\end{definition}
\begin{remark}
  For any binet satisfying \ref{def:principal-binet1}, we can define a normal line at $(n_1,n_2) \in U \cup U^*$
  as the line through the point $\br(n_1,n_2)$ orthogonal to the plane $\plane(n_1,n_2)$.
  With this definition, adjacent normal lines of a principal binet intersect.
  This is a discrete analogue of the classical fact that two ``infinitesimally'' adjacent normal lines along a line of curvature intersect.
\end{remark}
For any $\alpha > \beta > \gamma > 0$, we obtain a principal binet representing a single discrete ellipsoid
by replacing $a, b, c$ by $\alpha, \beta, \gamma$ in \eqref{F4}
and setting $f_3^2(s_3) = \alpha$, $g_3^2(s_3) = \beta$, $h_3^2(s_3) = \gamma$.
\begin{figure}[b]
  \centering
  \includegraphics[width=0.45\textwidth]{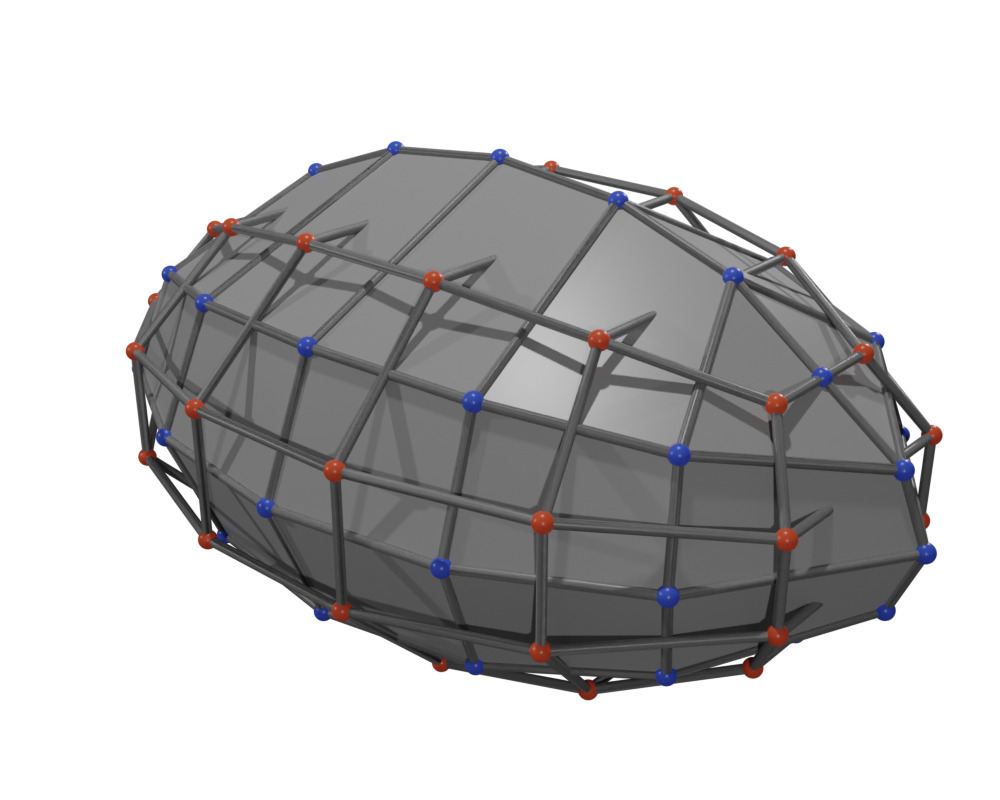}
  \hspace{0.5cm}
  \includegraphics[width=0.45\textwidth]{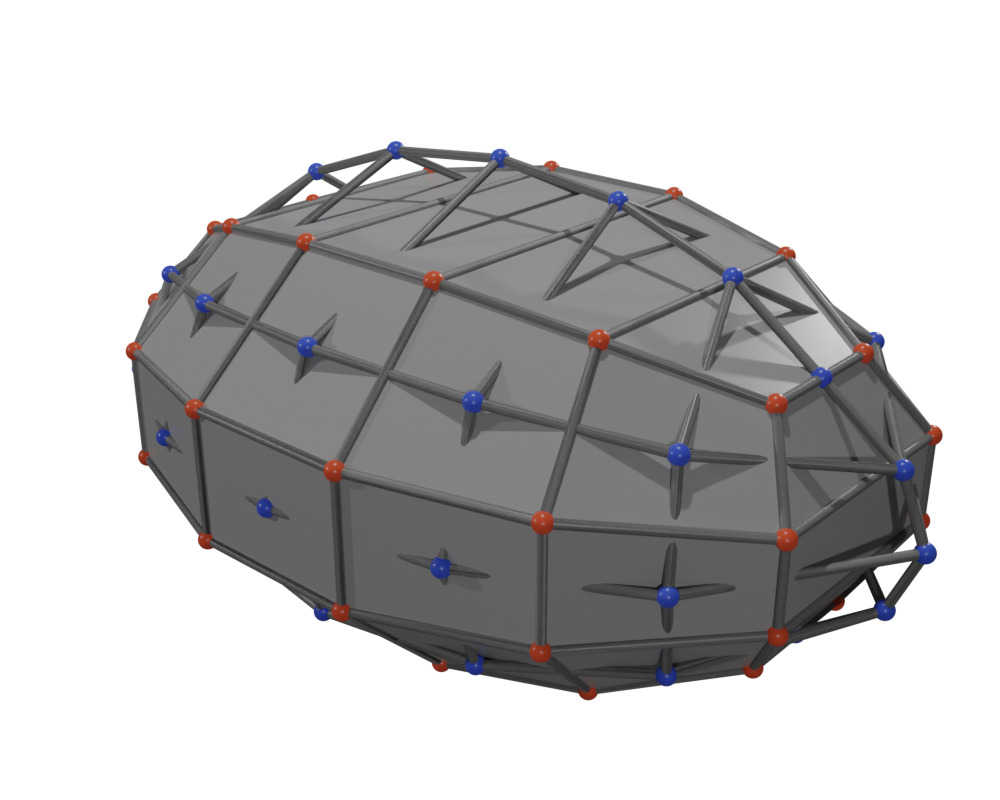}
  \caption{A discrete ellipsoid as a principal binet. Planar faces for each half of the binet are shown, respectively.}
  \label{fig:discrete-ellipsoid-curvature-lines}
\end{figure}
\begin{theorem}
  \label{thm:discrete-ellipsoid-curvature-line-parametrization}
  Let $\alpha > \beta > \gamma > 0$.
  Then, the binet
  \begin{equation}
    \label{eq:discrete-ellipsoid}
    \br:\Z^2 \cup \left(\Z^2\right)^* \supset U \cup U^* \rightarrow \R^3
  \end{equation}
  given by
  \begin{equation}
    \label{eq:discrete-ellipsoid-curvature-line-parametrization}
    \begin{split}
      x(n_1,n_2) &= \sqrt{\frac{\alpha}{(\alpha - \beta)(\alpha - \gamma)}} \, f_1(n_1)f_2(n_2) \\
      y(n_1,n_2) &= \sqrt{\frac{\beta}{(\alpha - \beta)(\beta - \gamma)}} \, g_1(n_1)g_2(n_2) \\
      z(n_1,n_2) &= \sqrt{\frac{\gamma}{(\alpha - \gamma)(\beta - \gamma)}} \, h_1(n_1)h_2(n_2)
    \end{split}
  \end{equation}
  with
  \begin{equation}
    \label{eq:discrete-f-single}
    \begin{aligned}
      f_1^\two(n_1) + g_1^\two(n_1) & = \alpha - \beta,\quad & f_1^\two(n_1) +h_1^\two(n_1) & = \alpha - \gamma\\
      f_2^\two(n_2) - g_2^\two(n_2) & = \alpha - \beta,\quad & f_2^\two(n_2) +h_2^\two(n_2) & = \alpha - \gamma
    \end{aligned}
  \end{equation}
  has the following properties:
  \begin{enumerate}
  \item
    $\br$ is a principal binet (see Figure~\ref{fig:discrete-ellipsoid-curvature-lines}).
  \item
    \label{thm:discrete-ellipsoid-curvature-line-parametrization2}
    For each vertex $\br(n_1,n_2)$, the plane $\plane(n_1,n_2)$ is its polar plane
    with respect to the classical ellipsoid \eqref{eq:ellipsoid}
    \[
      \frac{x^2}{\alpha} + \frac{y^2}{\beta} + \frac{z^2}{\gamma} = 1
    \]
    (see Figure~\ref{fig:discrete-ellipsoid-polarity}).
  \end{enumerate}
\end{theorem}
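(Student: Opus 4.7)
The plan is to isolate a single algebraic identity that does most of the work, namely the \emph{polar identity}
\begin{equation*}
  \frac{x(n_1,n_2)\,x(n_1^{\sigma_1},n_2^{\sigma_2})}{\alpha}
  + \frac{y(n_1,n_2)\,y(n_1^{\sigma_1},n_2^{\sigma_2})}{\beta}
  + \frac{z(n_1,n_2)\,z(n_1^{\sigma_1},n_2^{\sigma_2})}{\gamma} = 1
\end{equation*}
for every $(n_1,n_2)\in U\cup U^*$ and every choice of signs $\sigma_1,\sigma_2\in\{+,-\}$. Geometrically, this says that each of the four vertices of the face centred at $(n_1,n_2)$ lies on the polar plane of $\br(n_1,n_2)$ with respect to $\mathcal{E}$, so the single identity simultaneously establishes the planarity condition in part~(i) and the full content of part~(ii).

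To verify the identity I would substitute \eqref{eq:discrete-ellipsoid-curvature-line-parametrization} and notice that each product $f_i(n_i)f_i(n_i^{\sigma_i})$ collapses, via the definition \eqref{F2} of the discrete square, to $f_i^\two(m_i)$ for the unique $m_i\in\{n_i,n_i^-\}$ with $\{m_i,m_i^+\}=\{n_i,n_i^{\sigma_i}\}$; and similarly for the $g_i$- and $h_i$-products. Writing $F:=f_1^\two(m_1)$, $P:=f_2^\two(m_2)$, the relations \eqref{eq:discrete-f-single} (which hold pointwise on $\tfrac12\Z$) express $g_i^\two(m_i)$ and $h_i^\two(m_i)$ as affine functions of $F$ and $P$. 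Clearing the denominator $(\alpha-\beta)(\alpha-\gamma)(\beta-\gamma)$ turns the left-hand side into a polynomial in $F,P$; expanding shows that the coefficients of $F$, of $P$ and of $FP$ all cancel (the $FP$ cancellation reflects the telescoping $(\beta-\gamma)-(\alpha-\gamma)+(\alpha-\beta)=0$), while the constant term reduces to $(\alpha-\beta)(\alpha-\gamma)(\beta-\gamma)$.

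It remains to establish the orthogonality of dual edges in part~(i). I would compute the inner product of the edge vectors $\br(n_1^+,n_2)-\br(n_1^-,n_2)$ and $\br(n_1,n_2^+)-\br(n_1,n_2^-)$. Using $f_i(n_i)\bigl[f_i(n_i^+)-f_i(n_i^-)\bigr]=f_i^\two(n_i)-f_i^\two(n_i^-)$, and its $g_i$- and $h_i$-analogues, the inner product splits into three terms proportional to $\Delta F_1\Delta F_2$, $\Delta G_1\Delta G_2$, $\Delta H_1\Delta H_2$, where $\Delta$ denotes the discrete difference across $n_i$. The constancy of $f_i^\two+g_i^\two$, $f_i^\two+h_i^\two$ (for $i=1$) and of $f_i^\two-g_i^\two$, $f_i^\two+h_i^\two$ (for $i=2$) yields $\Delta G_1=-\Delta F_1$, $\Delta H_1=-\Delta F_1$, $\Delta G_2=\Delta F_2$, $\Delta H_2=-\Delta F_2$, so the whole inner product factorises as $\Delta F_1\Delta F_2$ times
\begin{equation*}
  \frac{\alpha}{(\alpha-\beta)(\alpha-\gamma)}
  - \frac{\beta}{(\alpha-\beta)(\beta-\gamma)}
  + \frac{\gamma}{(\alpha-\gamma)(\beta-\gamma)},
\end{equation*}
whose numerator $\alpha(\beta-\gamma)-\beta(\alpha-\gamma)+\gamma(\alpha-\beta)$ vanishes identically.

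All individual steps reduce to short polynomial manipulations. The only real point of care is bookkeeping: in the polar identity, $f_i^\two$, $g_i^\two$, $h_i^\two$ are evaluated at either $n_i$ or $n_i^-$ depending on $\sigma_i$, but since \eqref{eq:discrete-f-single} holds at every point of $\tfrac12\Z$ this dependence is harmless. The main conceptual observation — and the thing worth stating clearly in the writeup — is that the polar identity packages the planarity of faces and the polar-plane description into one line, after which orthogonality is a clean, self-contained calculation.
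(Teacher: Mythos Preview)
Your proof is correct and follows essentially the same route as the paper: part~(ii) is exactly the polar identity \eqref{F7}, which the paper states but does not expand, while you carry out the verification in full. The only packaging difference is that the paper deduces planarity in part~(i) by appealing to the limiting procedure from the three-dimensional discrete confocal theory (orthogonality of dual edges in $\R^3$ forces planarity of the two-dimensional slices), whereas you obtain planarity directly as a by-product of the polar identity---a cleaner and more self-contained argument, since it avoids invoking the ambient three-dimensional construction altogether.
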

\begin{proof}\
  \nobreakpar
  \begin{enumerate}
  \item
    This is a direct consequence of the limiting procedure applied to discrete confocal quadrics as described above.
  \item
    The polarity follows from \eqref{F7}.
  \end{enumerate}
\end{proof}
\begin{figure}
  \centering
  \includegraphics[width=0.45\textwidth]{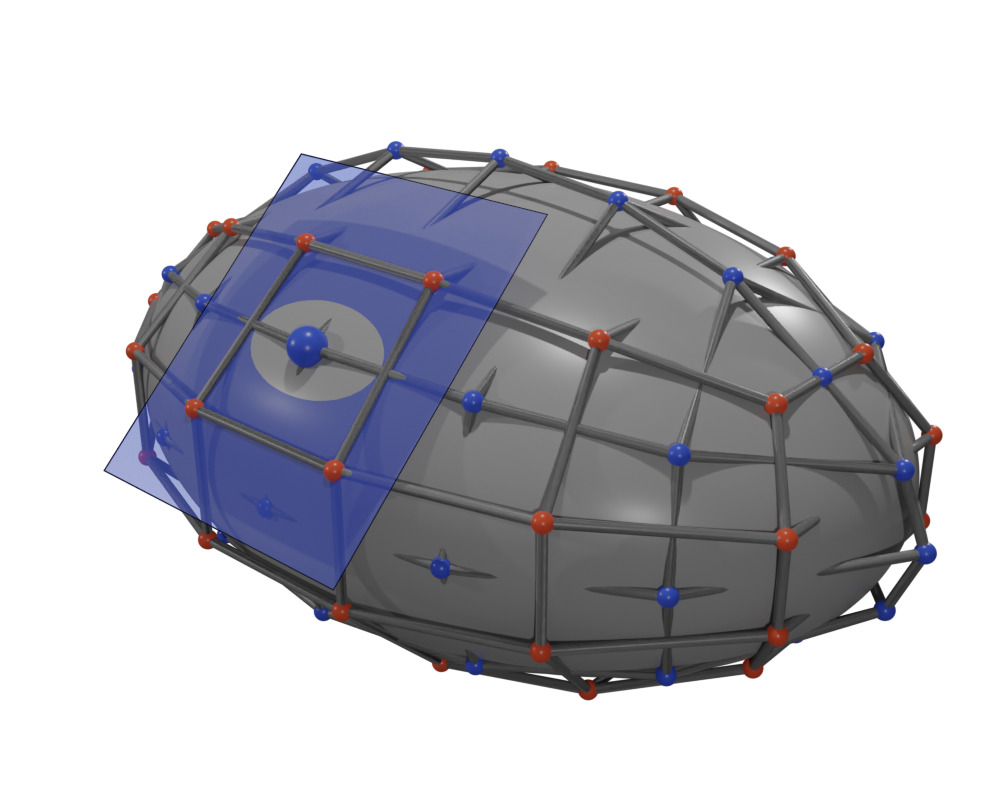}
  \caption{
    Polarity for the principal binet representing a discrete ellipsoid.
    Each vertex of the binet is the pole of the corresponding dual face plane
    with respect to an underlying classical ellipsoid.
  }
  \label{fig:discrete-ellipsoid-polarity}
\end{figure}

As in the continuous case, for any discrete binet
\[
  \br : \Z^2 \cup (\Z^*)^2 \supset U \cup U^* \rightarrow \R^3,\quad
\]
it is natural to introduce new discrete coordinates $(n_+,n_-)\in\Z^2$ defined by
\bela{G1}
n_\pm = \frac{n_1 \pm n_2}{2}
\ela
so that the domains $U$ and $U^*$ are characterised by
\bela{G2}
n_+ \pm n_- \in
\left\{\begin{aligned} & \Z \mbox{ on } U\\ & \Z^* \mbox{on }U^*.\end{aligned}\right.
\ela
Accordingly, a discrete curve $n_+=\mbox{const}$ is a pair of polygons consisting of a polygon
for which $n_++n_- \in \Z$ and a polygon for which $n_++n_- \in \Z^*$.
If $n_++n_-$ is in $\Z$ or in $\Z^*$ then the edges of the corresponding polygon $n_+=\mbox{const}$ are the diagonals of quadrilaterals of the discrete net $\br(U)$ or $\br(U^*)$ respectively.
This leads to the following natural definition.

\begin{definition}\
  \nobreakpar
  \label{def:discrete-diagonal}
  \begin{enumerate}
  \item
    Let
    \[
      \br : \Z^2 \supset U \rightarrow \R^3
    \]
    be a discrete net.
    Then, the binet
    \[
      \Z^2 \cup (\Z^*)^2 \supset \tilde{U} \cup \tilde{U}^* \rightarrow \R^3,\quad
      (n_+, n_-) \mapsto \br(n_+ + n_-, n_+ - n_-)
    \]
    where $\tilde{U} \cup \tilde{U}^*$ is defined such that $(n_+, n_-) \in \tilde{U} \cup \tilde{U}^*$ if and only if $(n_++n_-,n_+-n_i) \in U$,
    is called \emph{diagonal to} $\br$ (cf.\ Figure~\ref{fig:discrete-ellipsoid-diagonal}, top).
  \item
    Let
    \[
      \br : \Z^2 \cup \left(\Z^2\right)^* \supset U \cup U^* \rightarrow \R^3
    \]
    be a binet.
    Then, the pair of binets, which together may be defined on $(\frac{1}{2}\Z)^2$,
    \[
      (\tfrac{1}{2}\Z)^2 \supset \tilde{\Omega} \rightarrow \R^3,\quad
      (n_+, n_-) \mapsto \br(n_+ + n_-, n_+ - n_-),
    \]
    where $\tilde\Omega$ is defined such that $(n_+, n_-) \in \tilde{\Omega}$ if and only if $(n_++n_-,n_+-n_i) \in U \cup U^*$,
    is called \emph{diagonal to} $\br$ (cf.\ Figure~\ref{fig:discrete-ellipsoid-diagonal}, bottom).
  \end{enumerate}
\end{definition}
\begin{remark}
  Note that if $\br$ has planar faces, then the diagonal binets have intersecting dual edges.
\end{remark}

\section{A discretization of circular cross sections of ellipsoids and their isometric deformation}

In this section, it is demonstrated that the theory developed in \cite{BobenkoSchiefSurisTechter16,BobenkoSchiefSurisTechter18}
and extended in the previous section can be applied to the classical results concerning the circular cross sections of ellipsoids. In particular, this leads to the explicit construction of polyhedral analogues of ellipsoids composed of planar quadrilaterals with their diagonals forming planar closed polygons which constitute discretizations of the classical circular cross sections of ellipsoids.
This provides a further indication that the novel definition of discrete lines of curvature proposed in \cite{BobenkoSchiefSurisTechter18},
which are now termed principal binets \cite{principal-binets}, is both applicable and natural.

We start with the definition of a \emph{discrete circle},
which is based on polarity with respect to a classical circle,
similar to the polarity of a discrete ellipsoid with respect to a classical ellipsoid as observed in Theorem~\ref{thm:discrete-ellipsoid-curvature-line-parametrization}.
\begin{figure}[b]
  \centering
  \begin{overpic}[width=0.3\textwidth]{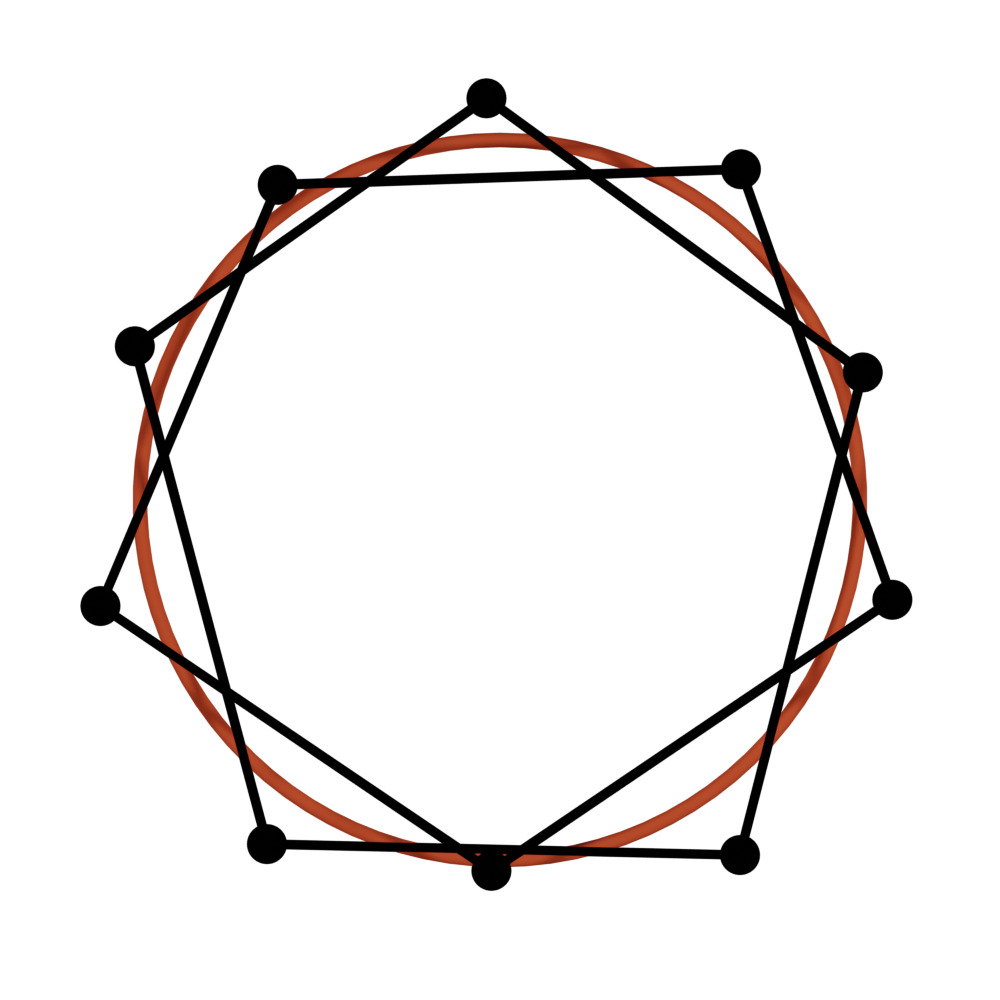}
    \put(90,62){$\br(n)$}
    \put(77.5,82){$\br(n^+)$}
    \put(93,39){$\br(n^-)$}
  \end{overpic}
  \caption{
    A discrete circle.
    Each vertex $\br(n)$ is the pole of the line passing through $\br(n^-)$ and $\br(n^+)$
    with respect to an underlying classical circle.
  }
  \label{fig:discrete-circle}
\end{figure}
\begin{definition}
  \label{def:discrete-circle}
  A planar discrete curve
  \[
    \br = (x, y) : \Z \cup \Z^* \supset I \cup I^* \rightarrow \R^2
  \]
  is called a \emph{discrete circle} if there exists $(x_0, y_0) \in \R^2$ and $R > 0$
  such that each vertex $\br(n)$ is the pole of the line passing through $\br(n^-)$ and $\br(n^+)$
  with respect to the circle of radius $R$ centred at $(x_0,y_0)$ (see Figure~\ref{fig:discrete-circle}).
  Algebraically, this is encoded in the relation
  \[
    (x(n) - x_0)^\two + (y(n) - y_0)^\two = R^2.
  \]
\end{definition}
\begin{remark}
  If we view a discrete circle as a pair of polygons given by the restrictions $\br(I)$ and $\br(I^*)$,
  the vertices of $\br(I)$ are the poles of the dual edges of $\br(I^*)$.
\end{remark}

Based on the coordinate transformation \eqref{G1}, we are now in a position to formulate and prove the following discrete analogue of Proposition~\ref{prop:ellipsoid-diagonal-relation}.
\begin{figure}
  \centering
  \includegraphics[width=0.42\textwidth]{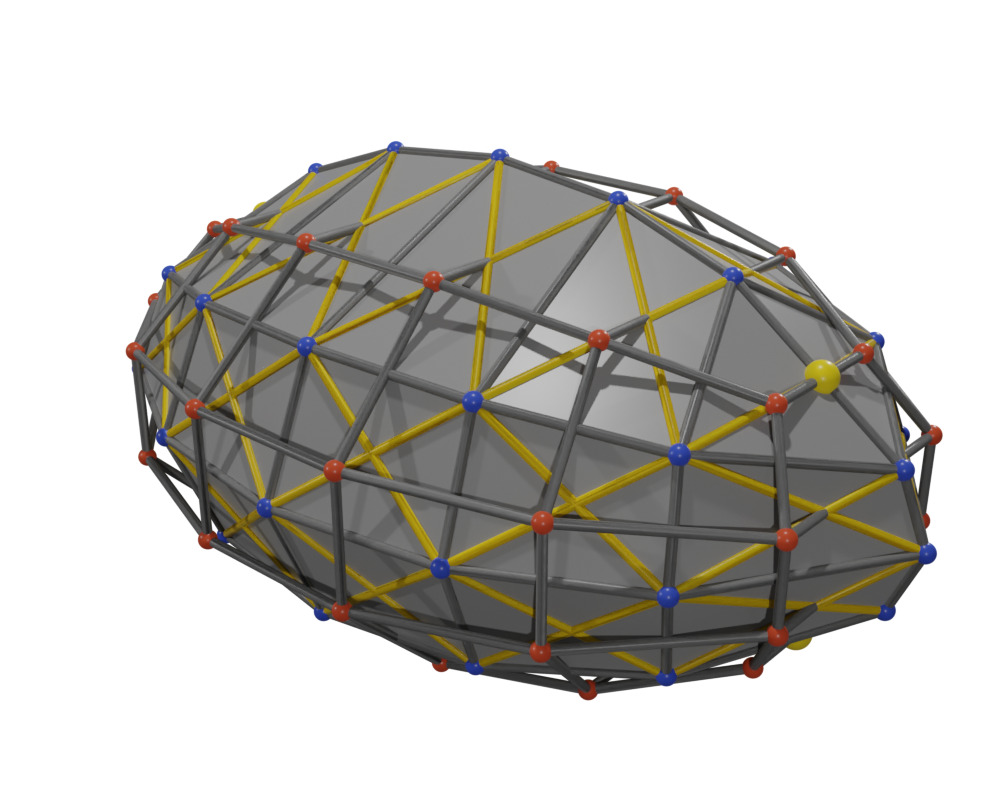}
  \hspace{0.5cm}
  \includegraphics[width=0.42\textwidth]{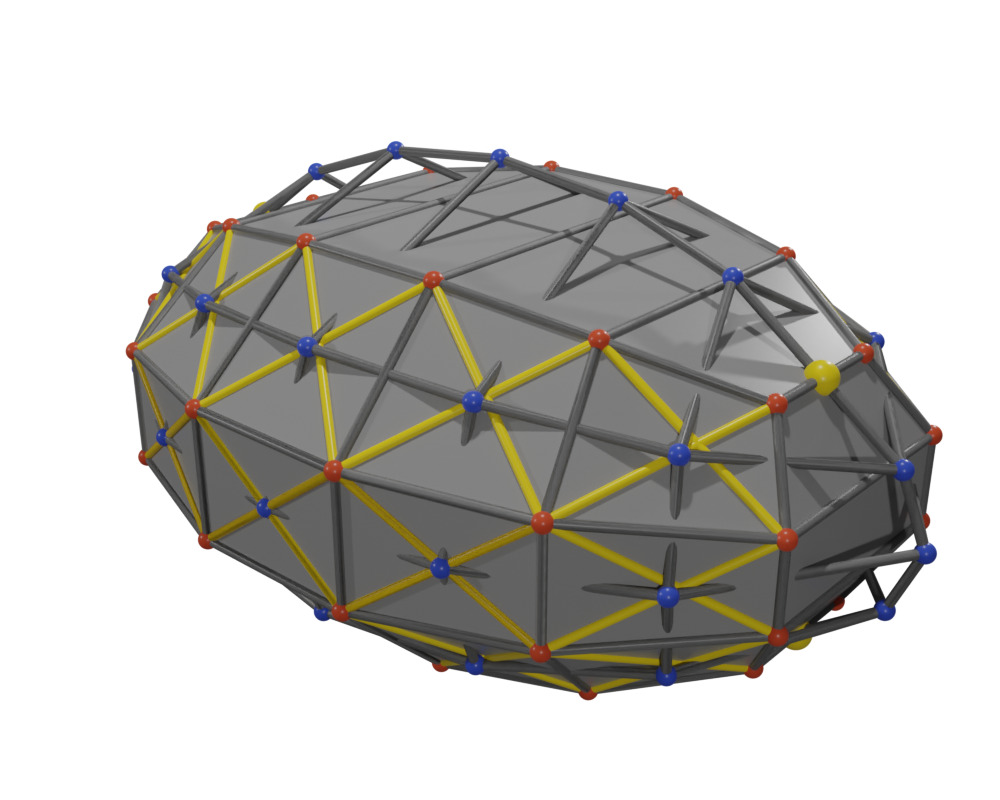}
  \includegraphics[width=0.42\textwidth]{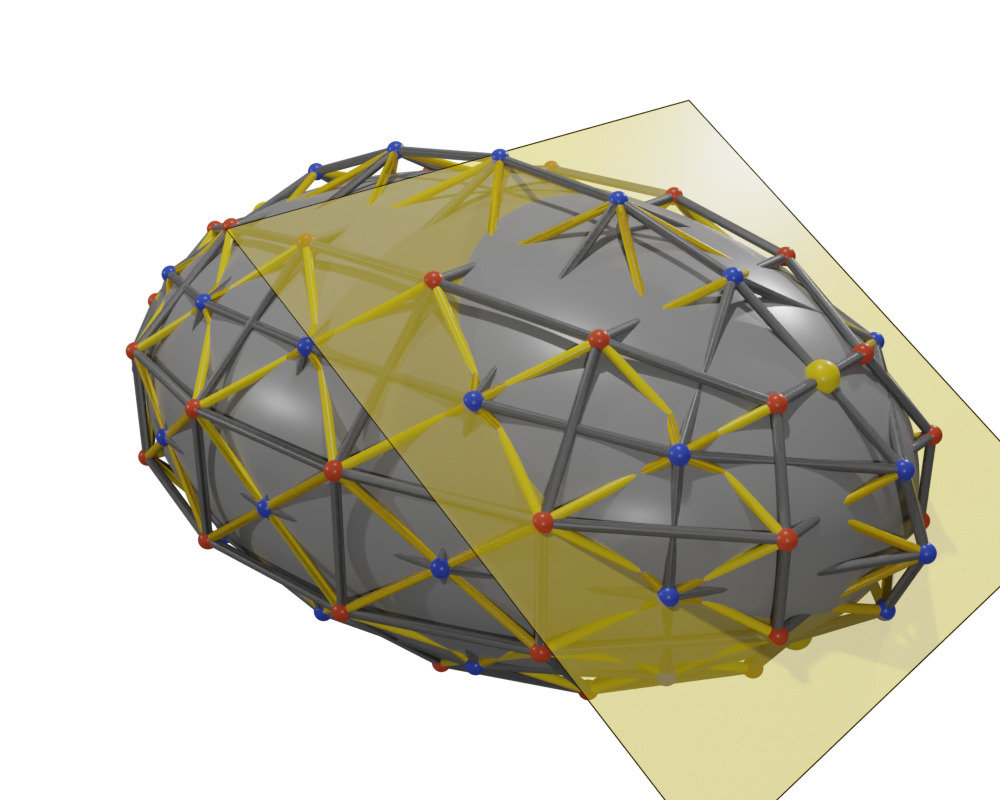}
  \caption{
    Discrete circular cross sections of a discrete ellipsoid.
    \emph{Top-left:}
    A pair of diagonal nets (yellow) to one half of a principal binet (shown with faces) representing a discrete ellipsoid.
    The two diagonal nets form a diagonal binet.
    \emph{Top-right:}
    A pair of diagonal nets (yellow) to the other half of a principal binet (shown with faces) representing a discrete ellipsoid.
    The two diagonal nets form another diagonal binet.
    \emph{Bottom:}
    Together, the two diagonal binets form the discrete circular cross sections of the discrete ellipsoid.
    The underlying classical ellipsoid is shown together with one of the planes of a circular cross section
    which contains one of the discrete circles.
  }
  \label{fig:discrete-ellipsoid-diagonal}
\end{figure}
\begin{theorem}
  \label{thm:discrete-diagonal-curvature-circular}
  Let $\alpha > \beta > \gamma > 0$ and $0 < \delta < \pi$.
  Then, the binet
  \begin{equation}
    \label{eq:discrete-ellipsoid-circles}
    \begin{split}
      \br &= (x,y,z) : \Z^2 \cup \left(\Z^2\right)^* \supset U \cup U^* \rightarrow \R^3\\[0.5em]
      x(n_1,n_2) &= \frac{1}{\cos(\delta/2)} \sqrt{\frac{\alpha(\alpha-\gamma)}{(\alpha - \beta)}} \, \sin(\delta n_1)\cos(\delta n_2) \\
      y(n_1,n_2) &= \sqrt{\frac{\beta}{(\alpha-\beta)(\beta-\gamma)}} g_1(n_1)g_2(n_2)\\
      z(n_1,n_2) &= \frac{1}{\cos(\delta/2)} \sqrt{\frac{\gamma(\alpha-\gamma)}{(\beta-\gamma)}} \, \cos(\delta n_1)\sin(\delta n_2),
    \end{split}
  \end{equation}
  where $g_1$ and $g_2$ are determined by initial conditions and the recurrence relations
  \bela{F13}
  g_1(n_1^+) = \frac{\alpha-\beta - f_1(n_1)^\two}{g_1(n_1)},\quad
  g_2(n_2^+) = \frac{f_2(n_2)^\two - \alpha+\beta}{g_2(n_2)},
  \ela
  has the following properties:
  \begin{enumerate}
  \item
    It constitutes a particular case of the class of discrete ellipsoids described in Theorem~\ref{thm:discrete-ellipsoid-curvature-line-parametrization}.
  \item
    The discrete curves
    \begin{equation}
      \label{eq:discrete-circular-cross-sections}
      n_\pm = \frac{n_1 \pm n_2}{2} = \mbox{const}
    \end{equation}
    are planar, and they are discrete circles in the sense of Definition~\ref{def:discrete-circle}.

    The collection of discrete circles form two binets
    which are diagonal to $\br$ in the sense of Definition~\ref{def:discrete-diagonal} (see Figure~\ref{fig:discrete-ellipsoid-diagonal}).
  \end{enumerate}
\end{theorem}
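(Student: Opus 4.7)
The plan is to reduce both parts to the polarity of the underlying classical ellipsoid combined with the sine addition formula. For part~(i), I would specialize the construction of Theorem~\ref{thm:discrete-ellipsoid-curvature-line-parametrization} by choosing
\[
  f_1(n_1) = A\sin(\delta n_1),\quad h_1(n_1) = A\cos(\delta n_1),\quad f_2(n_2) = A\cos(\delta n_2),\quad h_2(n_2) = A\sin(\delta n_2),
\]
with $A^2 = (\alpha-\gamma)/\cos(\delta/2)$. The cosine subtraction identity
\[
  \sin(\delta n)\sin(\delta(n+\tfrac{1}{2})) + \cos(\delta n)\cos(\delta(n+\tfrac{1}{2})) = \cos(\delta/2)
\]
then yields $f_i^\two(n_i) + h_i^\two(n_i) = A^2 \cos(\delta/2) = \alpha - \gamma$ for $i \in \{1,2\}$, so the two $(\alpha-\gamma)$-equations of \eqref{eq:discrete-f-single} are satisfied. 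The remaining $(\alpha-\beta)$-equations then \emph{define} $g_1$ and $g_2$ via the recurrences \eqref{F13}, and substitution into \eqref{eq:discrete-ellipsoid-curvature-line-parametrization} reproduces the stated parametrization. Hence $\br$ is a particular instance of Theorem~\ref{thm:discrete-ellipsoid-curvature-line-parametrization}, establishing (i).

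For the planarity part of (ii), I would substitute the parametrization into the equations of the planes $\Pi_\pm(\mu_\pm)$ from Proposition~\ref{prop:ellipsoid-circular-sections}. The $y$-coordinate does not appear in these equations, while the $x$- and $z$-contributions combine, via the sine addition formula
\[
  \sin(\delta n_1)\cos(\delta n_2) \pm \cos(\delta n_1)\sin(\delta n_2) = \sin(\delta(n_1 \pm n_2)),
\]
into a function of $n_\pm = (n_1 \pm n_2)/2$ alone. Therefore each diagonal curve $n_\pm = \mathrm{const}$ lies in a single plane $\Pi_\pm(\mu_\pm)$, with $\mu_\pm$ determined by the value of $n_\pm$.

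The main obstacle is the discrete-circle property, and my plan is to derive it from polarity rather than by direct coordinate computation. Fix a polygon $n_+ = c$ lying in the plane $\Pi := \Pi_+(\mu_+(c))$, and let $\mathcal{C} := \mathcal{E} \cap \Pi$ be the classical circular cross section of the reference ellipsoid \eqref{eq:ellipsoid}. For any vertex $V = \br(n_1, n_2)$ of the polygon, the two polygon neighbors $\br(n_1 - \tfrac{1}{2}, n_2 + \tfrac{1}{2})$ and $\br(n_1 + \tfrac{1}{2}, n_2 - \tfrac{1}{2})$ are two of the four corners of the face $\plane(n_1, n_2)$, so by part~\ref{thm:discrete-ellipsoid-curvature-line-parametrization2} of Theorem~\ref{thm:discrete-ellipsoid-curvature-line-parametrization} they lie on the polar plane of $V$ with respect to $\mathcal{E}$. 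Since $V$ itself lies in $\Pi$, the standard restriction property of polarity — intersecting the polar plane of a coplanar pole with the plane of the conic yields the polar line — identifies the line through these two neighbors with the polar line of $V$ with respect to $\mathcal{C}$. This is exactly the condition of Definition~\ref{def:discrete-circle}, so the polygon is a discrete circle; the argument for the polygons $n_- = \mathrm{const}$ is identical. Sorting these polygons by the parities of $n_1 \pm n_2$ finally assembles them into the two binets diagonal to $\br$ in the sense of Definition~\ref{def:discrete-diagonal}.
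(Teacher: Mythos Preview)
Your proof is correct and follows essentially the same approach as the paper: the same trigonometric specialization of $f_i,h_i$ with normalization $A^2=(\alpha-\gamma)/\cos(\delta/2)$ to verify \eqref{eq:discrete-f-single}, the sine addition formula to place the diagonal polygons in the planes $\Pi_\pm$ of Proposition~\ref{prop:ellipsoid-circular-sections}, and the restriction of the vertex--face polarity of Theorem~\ref{thm:discrete-ellipsoid-curvature-line-parametrization}\,\ref{thm:discrete-ellipsoid-curvature-line-parametrization2} to those planes to obtain the discrete-circle property. Your explicit mention of the ``restriction of polarity'' principle makes transparent the step the paper states more tersely.
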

\begin{proof}
  A natural discrete analogue of the functions \eqref{eq:f-solution} is given by
  \bela{F11}
  \begin{aligned}
    f_1(n_1) & = \epsilon \sqrt{\alpha - \gamma}\sin (\delta n_1),\quad &h_1(n_1) &= \epsilon \sqrt{\alpha - \gamma}\cos (\delta n_1)\\
    f_2(n_2) & = \epsilon \sqrt{\alpha - \gamma}\cos (\delta n_2),\quad &h_2(n_2) &= \epsilon\sqrt{\alpha - \gamma}\sin (\delta n_2).
  \end{aligned}
  \ela
  Indeed, the functional equations \eqref{eq:discrete-f-single} are satisfied provided that
  \bela{F12}
  \epsilon = \frac{1}{\sqrt{\cos(\delta/2)}}
  \ela
  for some constant $0 < \delta < \pi$.
  It is evident that \eqref{eq:f-solution} is retrieved in the limit $s_i = \delta n_i$, $\delta\rightarrow 0$.
  The functions $g_1, g_2$ in \eqref{eq:discrete-f-single}
  are obtained from the recurrence relations \eqref{F13}
  which determine them up to their initial values.
  Thus, the binets \eqref{eq:discrete-ellipsoid-circles} constitute a particular case of \eqref{eq:discrete-ellipsoid}
  with the corresponding classical ellipsoid \eqref{eq:ellipsoid}
  \[
    \frac{x^2}{\alpha} + \frac{y^2}{\beta} + \frac{z^2}{\gamma} = 1.
  \]

  Substituting the binet \eqref{eq:discrete-ellipsoid} into the equations of the circular cross sections
  of the classical ellipsoid \eqref{eq:ellipsoid} given by \eqref{eq:ellipsoid-circular-sections}, we obtain
  \[
    f_1(n_1)f_2(n_2) \pm h_1(n_1)h_2(n_2) = \sqrt{\beta(\alpha-\gamma)} \mu_\pm.
  \]
  The functions \eqref{F11}, on the other hand, satisfy,
  \[
    f_1(n_1)f_2(n_2) \pm h_1(n_1)h_2(n_2) = \tilde\mu_\pm(\tfrac{n_1\pm n_2}{2})
  \]
  with
  \[
    \tilde\mu_\pm(n_{\pm}) = \cos(\delta/2)(\alpha - \gamma) \sin(2n_\pm).
  \]
  Thus, the parameter polygons of the diagonal binets lie in the planes of the circular cross sections
  of the classical ellipsoid \eqref{eq:ellipsoid}.
  By Theorem~\ref{thm:discrete-ellipsoid-curvature-line-parametrization}~\ref{thm:discrete-ellipsoid-curvature-line-parametrization2}, the point $\br(n_1,n_2)$ and the plane $\plane(n_1,n_2)$ are polar.
  In the restriction to a parameter polygon, say $n_- = \tfrac{n_1-n_2}{2} = \mathrm{const}$,
  this means that the point $\br(n_1,n_2)$ and the line spanned by $\br(n_1^-,n_2^-)$ and $\br(n_1^+,n_2^+)$ are polar
  with respect to the circular cross section of the ellipsoid.
  Thus, the parameter polygons of the diagonal binets form discrete circles.
\end{proof}
\begin{remark}
  Considering the binet \eqref{eq:discrete-ellipsoid-circles} on a suitable domain $U \cup U^*$
  will only generate the discrete analogue of one half of an ellipsoid.
  Analogous to the smooth case, to generate both halves of an ellipsoid,
  one should consider two binets with different (positive and negative) initial conditions for $g_1$.
  This will be discussed in more detail in Section~\ref{sec:boundary-conditions}.
\end{remark}

As in the continuous case,
to obtain an isometric deformation of the discrete circular cross sections,
we scale the discrete ellipsoids of the family \eqref{F4} by $\frac{1}{g(s_3)}$ such that
\bela{F8}
\begin{aligned}
  x(n_1,n_2,s_3) &= \frac{f_1(n_1)f_2(n_2)\hat f_3(s_3)}{\sqrt{(a-b)(a-c)}}\\
  y(n_1,n_2,s_3) &= \frac{g_1(n_1)g_2(n_2)}{\sqrt{(a-b)(b-c)}}\\
  z(n_1,n_2,s_3) &= \frac{h_1(n_1)h_2(n_2)\hat h_3(s_3)}{\sqrt{(a-c)(b-c)}},
\end{aligned}
\ela
subject to the functional equations
\bela{F9}
 \begin{gathered}
  f_1^\two(n_1) + g_1^\two(n_1)  = a-b,\quad  f_1^\two(n_1) +h_1^\two(n_1)  = a-c\\
  f_2^\two(n_2) - g_2^\two(n_2)  = a-b,\quad  f_2^\two(n_2) +h_2^\two(n_2)  = a-c\\
  (b-c)\hat{f}_3^2(s_3) + (a-b)\hat{h}_3^2(s_3) = a-c,
 \end{gathered}
\ela
where, as before,
\bela{F10}
  \hat{f}_3(s_3) = \frac{f_3(s_3)}{g_3(s_3)}, \quad \hat{h}_3(s_3) = \frac{h_3(s_3)}{g_3(s_3)}.
\ela
The one-parameter family (in $s_3$) of discrete ellipsoids encoded in \eqref{F8}, \eqref{F9} form the basis of the following discussion.

The following theorem is the discrete analogue of Theorem~\ref{thm:isometric-circles}.
\begin{figure}
  \centering
  \includegraphics[width=0.24\textwidth]{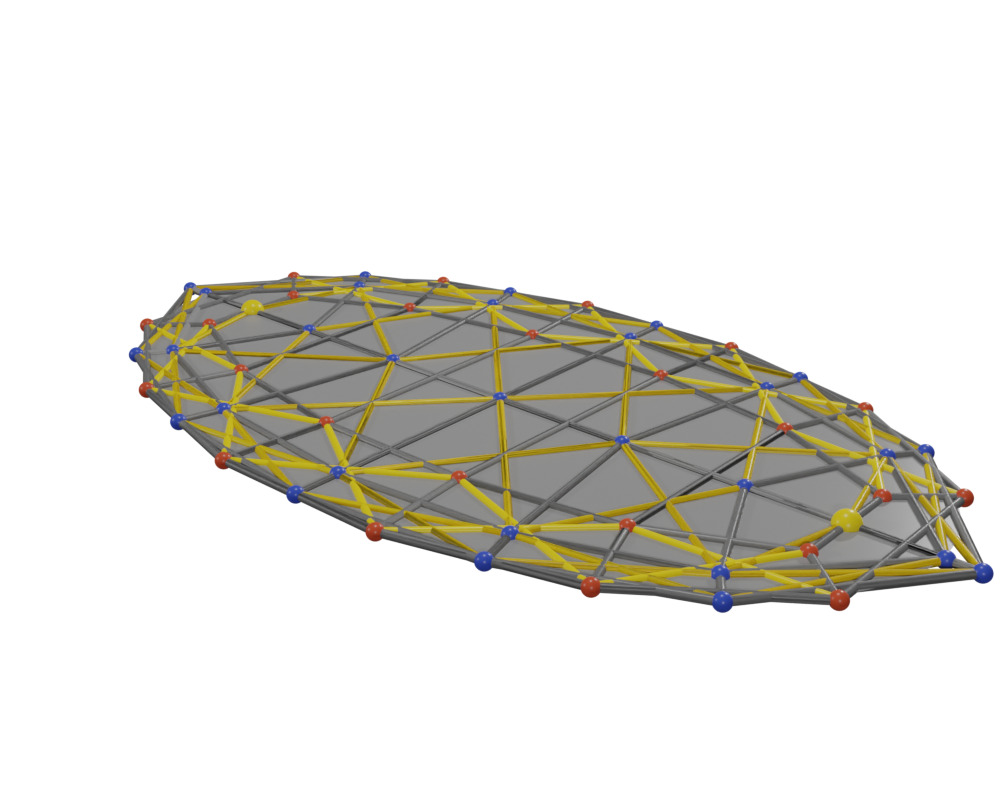}
  \includegraphics[width=0.24\textwidth]{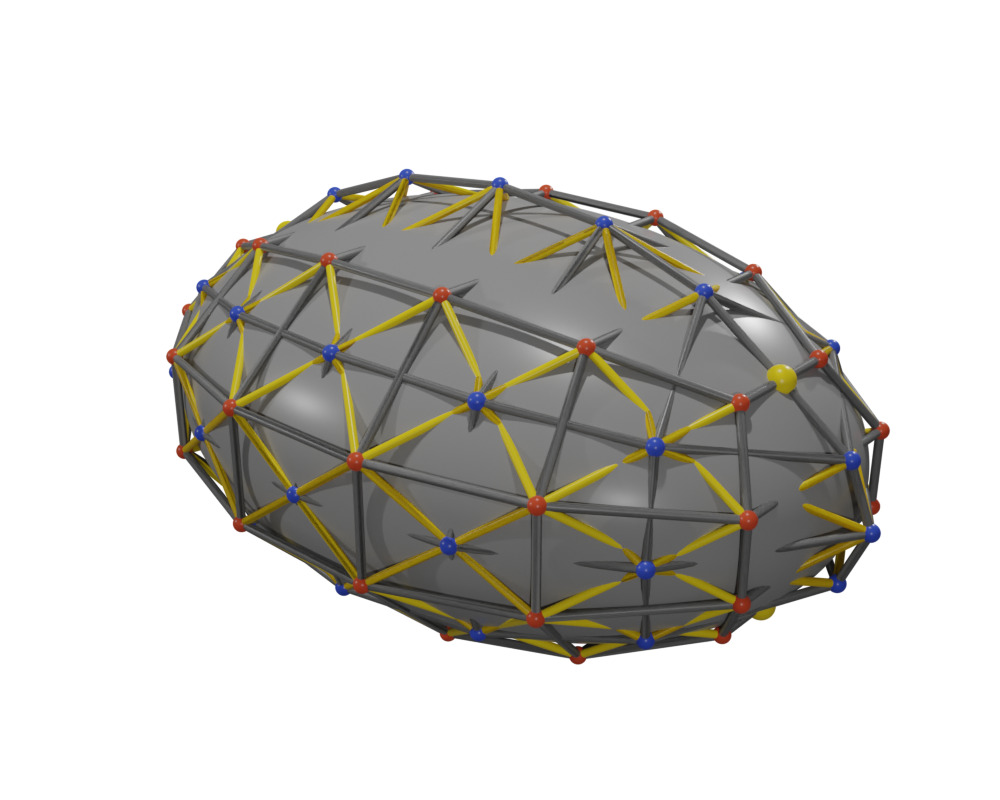}
  \includegraphics[width=0.24\textwidth]{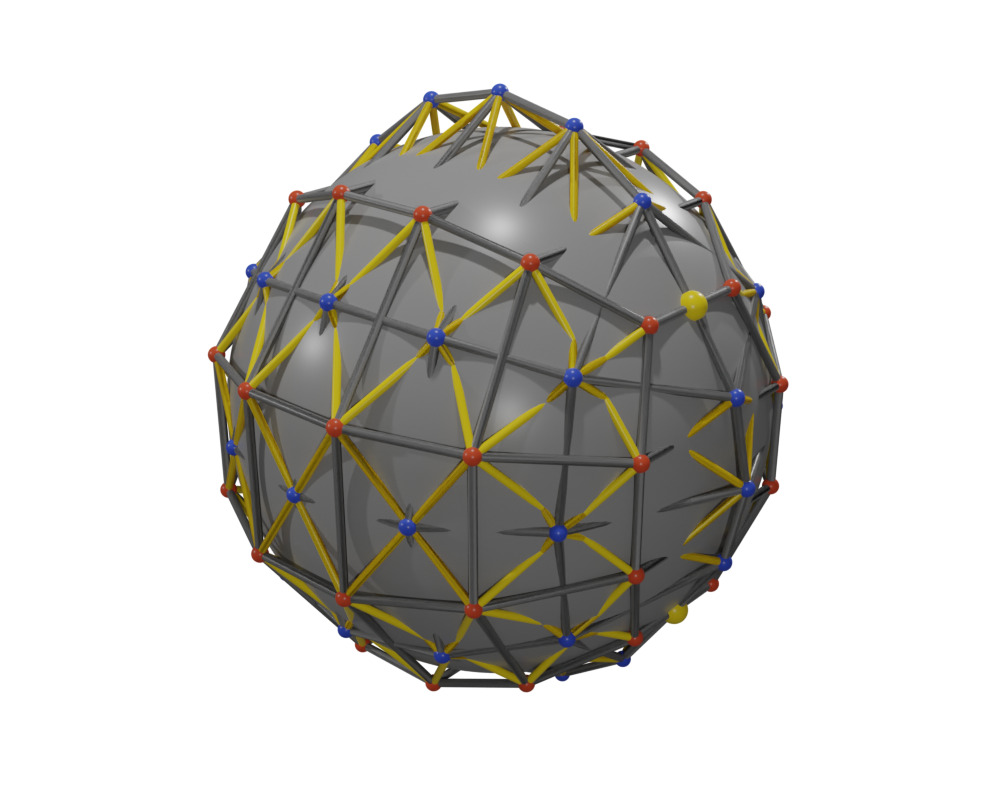}
  \includegraphics[width=0.24\textwidth]{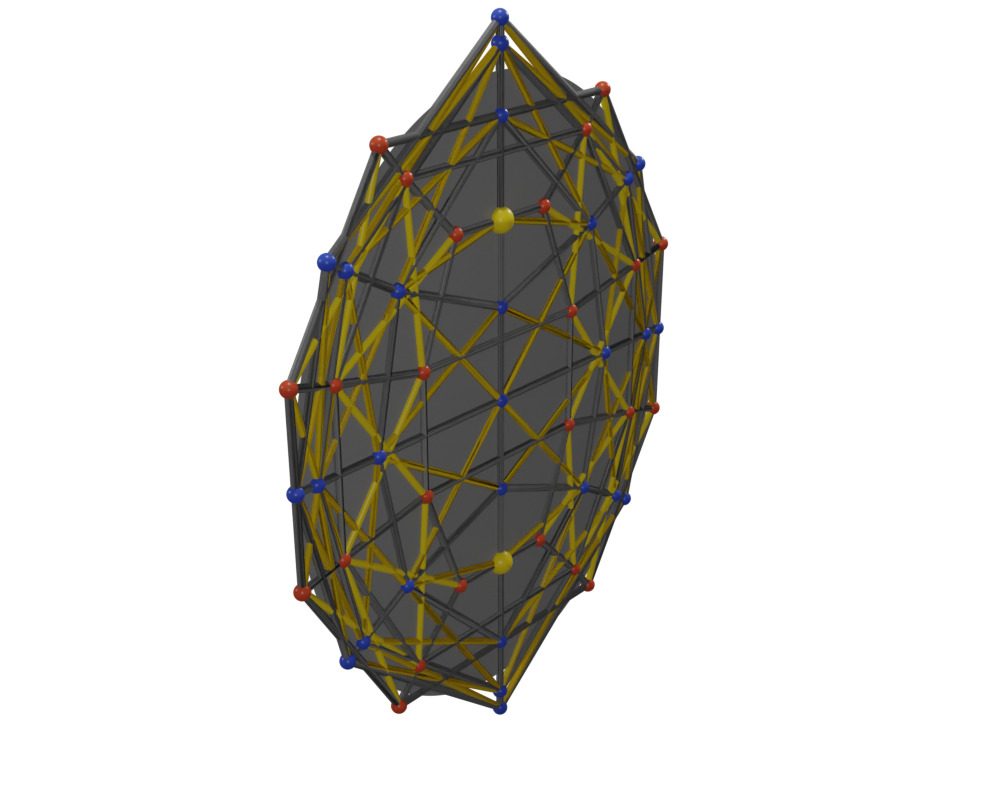}
  \caption{
    A one-parameter family of deformations of a discrete ellipsoid which is isometric along the discrete circular cross sections.
  }
  \label{fig:discrete-isometric-family}
\end{figure}
\begin{theorem}
  \label{thm:discrete-diagonal-curvature-circular2}
  Let $a > b > c > 0$ and $0 < \delta < \pi$.
  Then, the one-parameter family ($s_3 \in [0, \pi)$) of binets 
  \begin{equation}
    \label{eq:discrete-ellipsoid-circles-isometric}
    \begin{split}
      \br &= (x,y,z) : \Z^2 \cup \left(\Z^2\right)^* \supset U \cup U^* \rightarrow \R^3\\[0.5em]
      x(n_1,n_2) &= \frac{1}{\cos(\delta/2)} \frac{a-c}{\sqrt{(a-b)(b-c)}} \, \sin(\delta n_1)\cos(\delta n_2) \cos s_3 \\
      y(n_1,n_2) &= \frac{1}{\sqrt{(a-b)(b-c)}} g_1(n_1)g_2(n_2)\\
      z(n_1,n_2) &= \frac{1}{\cos(\delta/2)} \frac{a-c}{\sqrt{(a-b)(b-c)}} \, \cos(\delta n_1)\sin(\delta n_2) \sin s_3,
    \end{split}
  \end{equation}
  where $g_1$ and $g_2$ are determined by initial conditions and the recurrence relations
  \bela{F133}
  g_1(n_1^+) = \frac{a-b - f_1(n_1)^\two}{g_1(n_1)},\quad
  g_2(n_2^+) = \frac{f_2(n_2)^\two - a+b}{g_2(n_2)},
  \ela
  has the following properties (see Figure~\ref{fig:discrete-isometric-family}):
  \begin{enumerate}
  \item
    For each $s_3$, it constitutes a particular case of the class of discrete ellipsoids described in Theorem~\ref{thm:discrete-ellipsoid-curvature-line-parametrization} with
    \begin{equation}
      \alpha = \frac{b - c}{a - c}\cos^2 s_3,\quad
      \beta = 1,\quad
      \gamma = \frac{a - b}{a - c}\sin^2 s_3.
    \end{equation}
  \item
    For each $s_3$, the discrete curves
    \begin{equation}
      \label{eq:discrete-circular-cross-sections2}
      n_\pm = \frac{n_1 \pm n_2}{2} = \mbox{const}
    \end{equation}
    are planar, and they are discrete circles in the sense of Definition~\ref{def:discrete-circle}.
    
    For each $s_3$, the collection of discrete circles form two binets
    which are diagonal to $\br$ in the sense of Definition~\ref{def:discrete-diagonal}.
    
    For any two values of $s_3$,
    the two discrete circles $n_+ = \mbox{const}$ are congruent, and so are the two discrete circles $n_- = \mbox{const}$.
  \end{enumerate}
\end{theorem}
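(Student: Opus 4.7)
The plan is to mirror the proof of Theorem~\ref{thm:discrete-diagonal-curvature-circular}, absorbing the $s_3$-dependence into the continuous factors $\hat{f}_3$ and $\hat{h}_3$ of the framework \eqref{F8}--\eqref{F9}, and then to invoke Lemma~\ref{lem:ellispoid-isometric-circles} to obtain the congruence of discrete circles across different values of $s_3$.

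First, I would identify \eqref{eq:discrete-ellipsoid-circles-isometric} as the specialisation of \eqref{F8}--\eqref{F9} corresponding to the choices $f_1(n_1) = \epsilon\sqrt{a-c}\sin(\delta n_1)$, $h_1(n_1) = \epsilon\sqrt{a-c}\cos(\delta n_1)$, $f_2(n_2) = \epsilon\sqrt{a-c}\cos(\delta n_2)$, $h_2(n_2) = \epsilon\sqrt{a-c}\sin(\delta n_2)$ with $\epsilon = 1/\sqrt{\cos(\delta/2)}$, together with $\hat{f}_3(s_3) = \sqrt{(a-c)/(b-c)}\,\cos s_3$ and $\hat{h}_3(s_3) = \sqrt{(a-c)/(a-b)}\,\sin s_3$, while $g_1, g_2$ are recovered from \eqref{F133}. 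The trigonometric verification of the first four equations of \eqref{F9} is already carried out in the proof of Theorem~\ref{thm:discrete-diagonal-curvature-circular}, and the remaining equation $(b-c)\hat{f}_3^2 + (a-b)\hat{h}_3^2 = a-c$ collapses to $\cos^2 s_3 + \sin^2 s_3 = 1$. Combining this with Lemma~\ref{lem:ellispoid-isometric-circles}, which identifies the associated one-parameter family of underlying classical ellipsoids with \eqref{eq:deformation-family}, establishes part (i) with the stated values of $\alpha, \beta, \gamma$.

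For part (ii) at fixed $s_3$, I would substitute the binet into the plane equations \eqref{eq:ellipsoid-circular-sections} of the circular cross sections of the corresponding classical ellipsoid. Exactly as in the proof of Theorem~\ref{thm:discrete-diagonal-curvature-circular}, the discrete sine-addition identity yields $f_1(n_1)f_2(n_2) \pm h_1(n_1)h_2(n_2) = \tilde\mu_\pm((n_1\pm n_2)/2)$, so every diagonal polygon $n_\pm = \mathrm{const}$ lies in a single plane. Combining this planarity with the polarity statement of Theorem~\ref{thm:discrete-ellipsoid-curvature-line-parametrization}\ref{thm:discrete-ellipsoid-curvature-line-parametrization2} and restricting to the plane of that cross section shows that each vertex of such a polygon is the pole, with respect to the classical circular cross section lying in that plane, of the line through its two neighbours. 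By Definition~\ref{def:discrete-circle} this is a discrete circle, and the associated pair of binets is diagonal to $\br$ in the sense of Definition~\ref{def:discrete-diagonal}.

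For the congruence across different values of $s_3$, I would invoke Lemma~\ref{lem:ellispoid-isometric-circles}: any two members of the family \eqref{eq:deformation-family} are related by an affine transformation $(x,y,z)\mapsto(\sigma_1 x, y, \sigma_3 z)$ which is an isometry on the planes of the circular cross sections. Since the factors $\cos s_3$ and $\sin s_3$ enter \eqref{eq:discrete-ellipsoid-circles-isometric} only linearly, and only in the $x$- and $z$-coordinates, this pointwise affine action maps the vertices of a discrete circle at $s_3$ precisely onto the vertices of the corresponding discrete circle at $s_3'$ while preserving all mutual distances; this gives the claimed congruence. The main delicate point is purely notational: the ordering of $\alpha, \beta, \gamma$ in \eqref{eq:deformation-family} changes with $s_3$, so some care is needed in identifying which coordinate plays the role of largest or smallest semi-axis in \eqref{eq:ellipsoid-circular-sections} for a given $s_3$; this causes no trouble because the functional equations are symmetric under the corresponding swaps $f_i \leftrightarrow h_i$.
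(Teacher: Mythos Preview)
Your proposal is correct and follows essentially the same route as the paper's proof: you specialise \eqref{F8}--\eqref{F9} via the same trigonometric choices of $f_i,h_i$ and $\hat f_3,\hat h_3$, invoke the addition formula to place the diagonal polygons in the planes \eqref{eq:ellipsoid-circular-sections}, deduce the discrete-circle property from the polarity in Theorem~\ref{thm:discrete-ellipsoid-curvature-line-parametrization}\ref{thm:discrete-ellipsoid-curvature-line-parametrization2}, and obtain the congruence from Lemma~\ref{lem:ellispoid-isometric-circles}. Your remark on the $s_3$-dependent ordering of the semi-axes corresponds to the paper's case distinction $\hat f_3^2(s_3)\gtrless 1$.
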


\begin{proof}
  As we have seen in the proof of Theorem~\ref{thm:discrete-diagonal-curvature-circular},
  the functions
  \begin{equation}
    \label{f-discrete-solutions-family}
    \begin{aligned}
      f_1(n_1) & = \sqrt{\frac{a-c}{\cos(\delta/2)}}\sin (\delta n_1),
      \quad &h_1(n_1) &= \sqrt{\frac{a-c}{\cos(\delta/2)}}\cos (\delta n_1)\\
      f_2(n_2) & = \sqrt{\frac{a-c}{\cos(\delta/2)}}\cos (\delta n_2),
      \quad &h_2(n_2) &= \sqrt{\frac{a-c}{\cos(\delta/2)}}\sin (\delta n_2)
    \end{aligned}
  \end{equation}
  satisfy \eqref{F9}$_{1,2}$.
  As in Theorem~\ref{thm:isometric-circles}, we choose
  \[
    \hat f_3(s_3) = \sqrt{\frac{a-c}{b-c}} \cos s_3,\qquad
    \hat h_3(s_3) = \sqrt{\frac{a-c}{a-b}} \sin s_3
  \]
  as solutions of \eqref{F9}$_{3}$.
  The functions $g_1, g_2$ in \eqref{F9}
  are obtained from the recurrence relations \eqref{F133}
  which determine them up to their signs and initial values.
  Thus, the one-parameter family of binets \eqref{eq:discrete-ellipsoid-circles-isometric}
  constitutes a particular case of \eqref{eq:discrete-ellipsoid}
  with the corresponding classical ellipsoids \eqref{eq:ellipsoid}, where
  \[
    \alpha = \hat f_3(s_3)^2, \quad
    \beta = 1, \quad
    \gamma = \hat h_3(s_3)^2,
  \]
  i.e., the one-parameter family of ellipsoids
  \begin{equation}
    \label{eq:deformation-family-again}
    \frac{b-c}{a-c}\frac{x^2}{\cos^2 s_3} + y^2 + \frac{a-b}{a-c}\frac{z^2}{\sin^2 s_3} = 1.
  \end{equation}
  Substituting the binet \eqref{F6}
  into the equations of the circular cross sections given by \eqref{eq:ellipsoid-circular-sections},
  we obtain
  \[
    \sqrt{\frac{\hat f_3(s_3)^2 - 1}{(a - b)\hat f_3(s_3)^2}} f_1(n_1)f_2(n_2)\hat f_3(s_3) \pm \sqrt{\frac{1 - \hat h_3(s_3)^2}{(b - c)\hat h_3(s_3)^2}} h_1(n_1)h_2(n_2)\hat h_3(s_3) = \sqrt{a - c}\,\mu_\pm,
  \]
  provided that $\hat f_3(s_3)^2 > 1$.
  The case  $\hat f_3(s_3)^2 < 1$ may be dealt with in a similar manner.
  Applying \eqref{E11-variation},
  this may be formulated as
  \begin{equation}
    \label{eq:ellipsoid-parametrization-into-planes3}
    f_1(s_1)f_2(s_2) \pm h_1(s_1)h_2(s_2) = \tilde\mu_\pm.
  \end{equation}
  The functions \eqref{f-discrete-solutions-family}, on the other hand, satisfy
  \[
    f_1(n_1)f_2(n_2) \pm h_1(n_1)h_2(n_2) = \cos(\delta/2)(a - c) \sin(n_1 \pm n_2)
  \]
  so that \eqref{eq:ellipsoid-parametrization-into-planes3} holds with
  \[
    \tilde\mu_\pm(n_{\pm}) = \cos(\delta/2)(a - c) \sin(2n_\pm).
  \]
  Thus, the parameter polygons of the diagonal binets lie in the planes of the circular cross sections
  of the family \eqref{eq:deformation-family-again}.
  In the same way as in the proof of Theorem~\ref{thm:discrete-diagonal-curvature-circular},
  it follows that these parameter polygons form discrete circles.
  By Lemma~\ref{lem:ellispoid-isometric-circles}, the affine transformations
  that relate the ellipsoids in the family \eqref{eq:deformation-family-again}
  are isometric on the planes of the circular cross sections.
  Hence, corresponding discrete circles are congruent.
\end{proof}

In summary, we are now in a position to state the main theorem,
which constitutes a discrete analogue of Theorem~\ref{thm:circular-cross-sections}.
\begin{theorem}
  \label{discretemaintheorem}
  The discrete ellipsoids \eqref{eq:discrete-ellipsoid-circles} exhibit the following properties.
  \begin{enumerate}
  \item
    \label{discretemaintheorem1}
    The parameter polygons $\frac{n_1 \pm n_2}{2} = \mathrm{const}$ are discrete circles,
    which lie in parallel planes (see Figure~\ref{fig:discrete-ellipsoid-diagonal}).
  \item
    \label{discretemaintheorem2}
    The planes $\plane(n_1,n_2)$ along the circular cross sections $\frac{n_1 \pm n_2}{2} = \mathrm{const}$
    meet at a point (see Figure~\ref{fig:discrete-tangent-cone}).
  \item
    The collection of discrete circles form two binets which are diagonal to the principal binet (see Figure~\ref{fig:discrete-ellipsoid-diagonal}).
  \item
    The two families of discrete circular cross sections
    may be continuously deformed in such a manner that the deformed discrete circles are congruent to the original discrete circles
    and remain the discrete circular cross sections of discrete ellipsoids (see Figure~\ref{fig:discrete-isometric-family}).

    Furthermore, any two discrete ellipsoids from this family of deformations are discrete confocal quadrics
    in the sense of \cite{BobenkoSchiefSurisTechter18} up to a scaling.
  \end{enumerate}
\end{theorem}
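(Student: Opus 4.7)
The plan is to derive each of the four items as a direct consequence of the technical results already established in this section, assembling them into the omnibus statement. Items~\ref{discretemaintheorem1} and the third are essentially restatements of Theorem~\ref{thm:discrete-diagonal-curvature-circular}: the parameter polygons $\tfrac{n_1\pm n_2}{2}=\mathrm{const}$ were shown there to be discrete circles forming two binets diagonal to the principal binet, and by construction in the proof they lie in the planes $\Pi_\pm(\mu_\pm)$ of~\eqref{eq:ellipsoid-circular-sections} for varying $\mu_\pm$, which for each sign choice is a one-parameter family of parallel planes. The fourth item is exactly the content of Theorem~\ref{thm:discrete-diagonal-curvature-circular2} combined with Lemma~\ref{lem:ellispoid-isometric-circles}: the one-parameter family~\eqref{eq:discrete-ellipsoid-circles-isometric} realizes the deformation, the discrete circles remain congruent across the family, and the underlying classical ellipsoids are confocal up to a uniform scaling by the construction encoded in~\eqref{E11}.

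The only genuinely new step is the concurrence assertion in item~\ref{discretemaintheorem2}, which is the discrete analogue of Proposition~\ref{prop:ellipsoid-polar-points}. The key input is the polarity identified in Theorem~\ref{thm:discrete-ellipsoid-curvature-line-parametrization}\ref{thm:discrete-ellipsoid-curvature-line-parametrization2}: each face plane $\plane(n_1,n_2)$ is the polar plane of the vertex $\br(n_1,n_2)$ with respect to the underlying classical ellipsoid~\eqref{eq:ellipsoid}. By the coplanarity already recorded in item~\ref{discretemaintheorem1}, all vertices $\br(n_1,n_2)$ indexing the face planes along a discrete cross section $\tfrac{n_1\pm n_2}{2}=\mathrm{const}$ lie in a single classical circular cross section plane $\Pi_\pm(\mu_\pm)$. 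Standard projective polar duality then forces the polar planes of these coplanar points to share a common point, namely the pole of $\Pi_\pm(\mu_\pm)$; as in the classical case this pole lies on one of the two lines through opposite umbilics of the underlying ellipsoid.

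I do not expect a substantive obstacle: the heavy lifting has already been done in Theorems~\ref{thm:discrete-ellipsoid-curvature-line-parametrization}, \ref{thm:discrete-diagonal-curvature-circular}, and~\ref{thm:discrete-diagonal-curvature-circular2}. The one point requiring a little care is combinatorial bookkeeping: the vertices of a discrete cross section alternate between the sublattices $\Z^2$ and $(\Z^2)^*$, so one must verify that the polarity produces the same pole from both sub-families. This is immediate because the polarity in Theorem~\ref{thm:discrete-ellipsoid-curvature-line-parametrization}\ref{thm:discrete-ellipsoid-curvature-line-parametrization2} holds with respect to one and the same classical ellipsoid for vertices of either sublattice, so the common pole of the shared plane $\Pi_\pm(\mu_\pm)$ is automatically shared by every face plane along the cross section.
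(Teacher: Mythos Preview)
Your proposal is correct and follows essentially the same route as the paper: items~\ref{discretemaintheorem1}, (iii), (iv) are harvested directly from Theorems~\ref{thm:discrete-diagonal-curvature-circular} and~\ref{thm:discrete-diagonal-curvature-circular2}, and item~\ref{discretemaintheorem2} is deduced from the polarity of Theorem~\ref{thm:discrete-ellipsoid-curvature-line-parametrization}\ref{thm:discrete-ellipsoid-curvature-line-parametrization2} together with the coplanarity from~\ref{discretemaintheorem1}, exactly as the paper does. Your extra remarks about the pole lying on an umbilic line and the sublattice bookkeeping go slightly beyond what is needed here (the umbilic statement is deferred in the paper to Theorem~\ref{thm:discrete-umbilics}), but they are correct and harmless.
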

\begin{figure}
  \centering
  \includegraphics[width=0.42\textwidth]{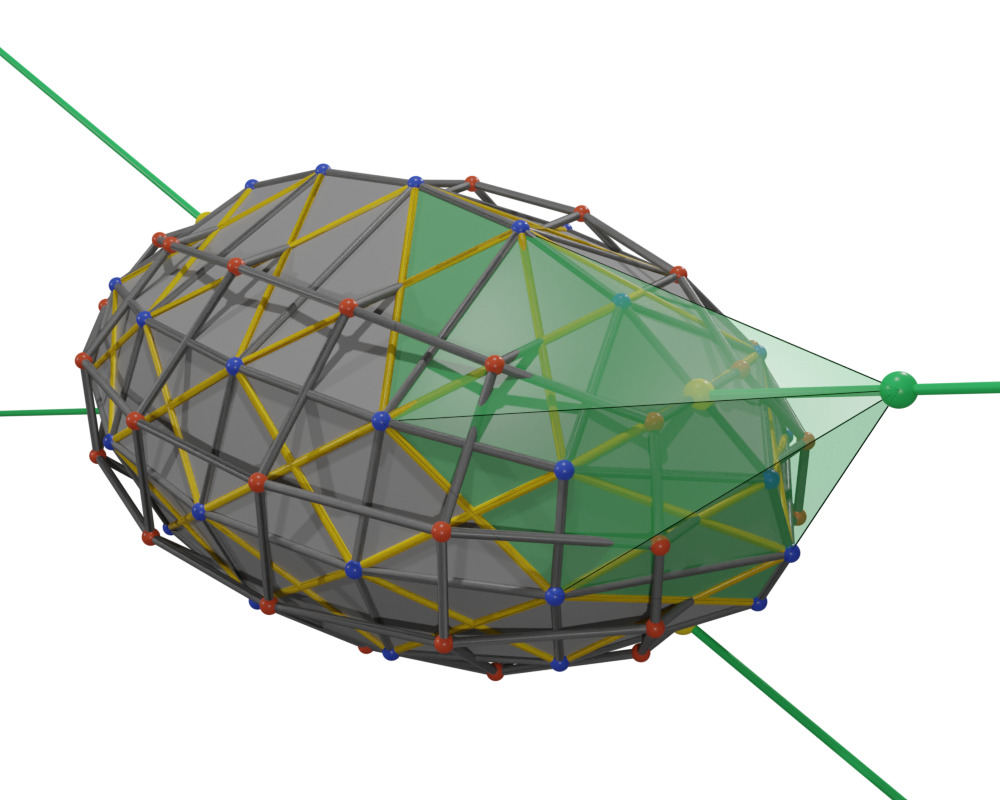}
  \hspace*{1cm}
  \includegraphics[width=0.42\textwidth]{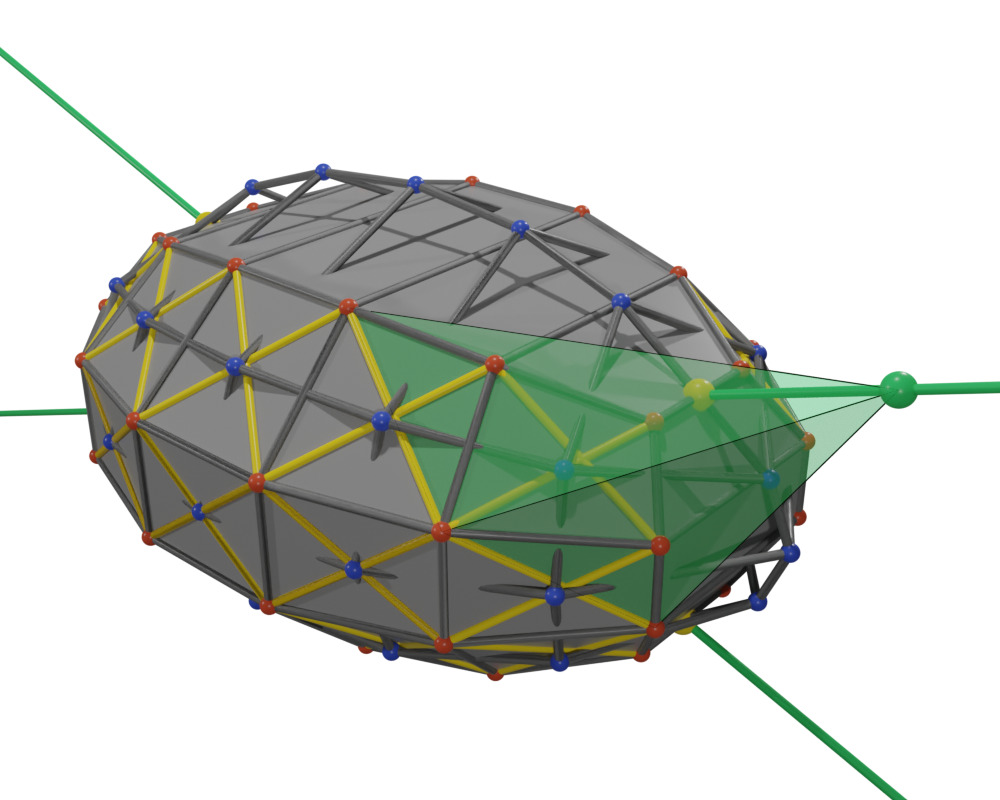}
  \caption{
    The planes along a discrete circular cross section of the binet representing a discrete ellipsoid meet at a point,
    which is the pole of the plane of the discrete circular cross section.
    Together, they form a ``discrete touching cone'', the apex of which lies on one of two lines passing through opposite ``discrete umbilic vertices''.
  }
  \label{fig:discrete-tangent-cone}
\end{figure}
\begin{proof}
  We yet have to prove \ref{discretemaintheorem2}.
  All other statements follow directly from Theorems~\ref{thm:discrete-diagonal-curvature-circular} and~\ref{thm:discrete-diagonal-curvature-circular2}.
  By \ref{discretemaintheorem1}, the parameter polygons $\frac{n_1 \pm n_2}{2} = \mathrm{const}$ are discrete circles.
  Thus, by Theorem~\ref{thm:discrete-ellipsoid-curvature-line-parametrization}~\ref{thm:discrete-ellipsoid-curvature-line-parametrization2},
  the planes $\plane(n_1,n_2)$ along a circular cross section $\frac{n_1 \pm n_2}{2} = \mathrm{const}$
  are the polar planes of the vertices of a discrete circle.
  Since the vertices of a discrete circle lie in a plane, the planes $\plane(n_1,n_2)$ intersect in the pole of that plane.
\end{proof}

\begin{remark}
  \label{rem:missing-umbilics}
  Theorem~\ref{thm:circular-cross-sections} contains some additional statements about the umbilic points of the ellipsoid.
  We discuss discrete analogues of umbilic points in Section~\ref{sec:umbilic} and formulate corresponding additional statements in Theorem~\ref{thm:discrete-umbilics}.
\end{remark}

\section{Boundary conditions and the combinatorics of discrete ellipsoids}
\label{sec:boundary-conditions}

\subsection{Boundary conditions}
In order to construct closed discrete ellipsoids, we specify explicitly suitable domains $U$ and $U^*$
and associated boundary conditions.

Let $N_1$ and $N_2$ be positive integers and
\bela{F26}
\begin{split}
  U &= \set{(n_1,n_2)\in\Z^2}{-N_i\leq n_i\leq N_i, ~i=1,2}\\
  U^* &= \set{(n_1,n_2)\in\left(\Z^2\right)^* }{ -N_i + \tfrac{1}{2} \leq n_i \leq N_i - \tfrac{1}{2}, ~i=1,2}.
\end{split}
\ela
From now on, we consider the entire family of isometric deformations of discrete ellipsoids
\bela{F27}
\begin{aligned}
  \br &= (x,y,z) : U\cup U^*\rightarrow \R^3\\
  x(n_1, n_2, s_3) &= \frac{f_1(n_1)f_2(n_2)\hat{f}_3(s_3)}{\sqrt{(a-b)(a-c)}}\\
  y(n_1, n_2, s_3) &= \frac{g_1(n_1)g_2(n_2)}{\sqrt{(a-b)(b-c)}}\\
  z(n_1, n_2, s_3) &= \frac{h_1(n_1)h_2(n_2)\hat{h}_3(s_3)}{\sqrt{(a-c)(b-c)}},
\end{aligned}
\ela
where
\bela{F29}
\begin{aligned}
  f_1(n_1) & = \sqrt{\frac{a-c}{\cos(\delta/2)}}\sin (\delta n_1),
  \quad &h_1(n_1) &= \sqrt{\frac{a-c}{\cos(\delta/2)}}\cos (\delta n_1)\\
  f_2(n_2) & = \sqrt{\frac{a-c}{\cos(\delta/2)}}\cos (\delta n_2),
  \quad &h_2(n_2) &= \sqrt{\frac{a-c}{\cos(\delta/2)}}\sin (\delta n_2).
\end{aligned}
\ela
with $a > b > c > 0$ and $0 < \delta < \pi$.
The functions $\hat{f}_3$ and $\hat{h}_3$ are constrained by
\bela{F32}
  (b-c)\hat{f}_3^2(s_3) + (a-b)\hat{h}_3^2(s_3) = a-c.
\ela
The functions
\[
  \begin{aligned}
    g_1 : &\set{n_1 \in \tfrac{1}{2}\Z}{ -N_1 \leq n_1 \leq N_1} \rightarrow \R\\
    g_2 : &\set{n_2 \in \tfrac{1}{2}\Z}{ -N_2 \leq n_2 \leq N_2} \rightarrow \R\\
  \end{aligned}
\]
are determined by initial conditions and the recurrence relations
\bela{F31}
\begin{aligned}
  g_1(n_1+\tfrac{1}{2}) &= \frac{a-b - f_1(n_1)f_1(n_1+\tfrac{1}{2})}{g_1(n_1)}\\
  g_2(n_2+\tfrac{1}{2}) &= \frac{f_2(n_2)f_2(n_2+\tfrac{1}{2}) - a+b}{g_2(n_2)}.
\end{aligned}
\ela
It is noted that, as in the continuous case, \eqref{F27}, \eqref{F31}
represents only one half of an ellipsoid.
In order to glue two halves together, we now examine suitable boundary conditions.
Indeed, a canonical choice is $y = 0$ on the boundary of $U$, corresponding to
$g_1(\pm N_1) = 0$, $g_2(\pm N_2) = 0$ (see Figure~\ref{fig:discrete-boundary-conditions}).
\begin{figure}
  \centering
  \includegraphics[width=0.42\textwidth]{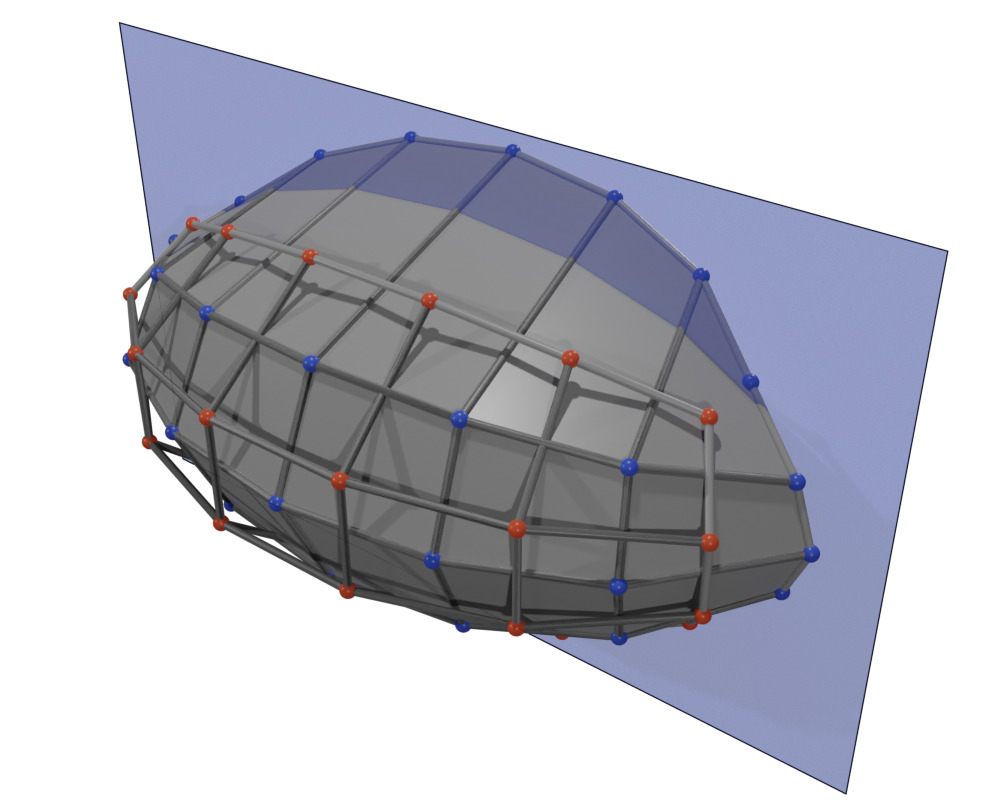}
  \hspace*{1cm}
  \includegraphics[width=0.42\textwidth]{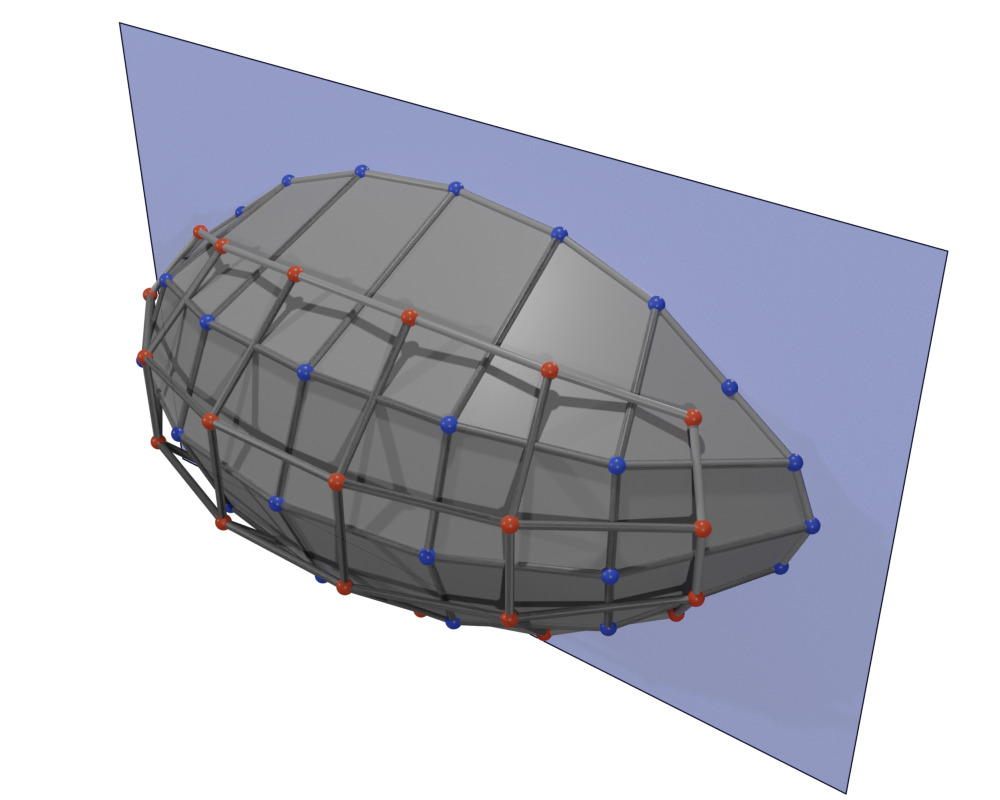}
  \caption{
    Discrete semi-ellipsoids together with the coordinate plane $y = 0$.
    \emph{Left:} A discrete semi-ellipsoid not obeying the boundary conditions.
    \emph{Right:} A discrete semi-ellipsoid satisfying the boundary conditions.
  }
  \label{fig:discrete-boundary-conditions}
\end{figure}
\begin{theorem}
  \label{thm:boundary-conditions}
  Let $g_1$ and $g_2$ be solutions of \eqref{F31}.
  Then, $g_1$ and $g_2$ satisfy the boundary conditions
  \bela{F16}
  g_1(\pm N_1) = 0,\qquad g_2(\pm N_2) = 0
  \ela
  and do not change their signs on their respective domains, if
  \bela{F33}
  \frac{a-2b+c}{a-c}
  = -\frac{\displaystyle\cos\frac{(4N_1-1)\pi}{4N_1+4N_2-2}}{\displaystyle\cos\frac{\pi}{4N_1+4N_2-2}}
  = \frac{\displaystyle\cos\frac{(4N_2-1)\pi}{4N_1+4N_2-2}}{\displaystyle\cos\frac{\pi}{4N_1+4N_2-2}}
  \ela
  and
  \bela{F24}
  \delta = \frac{\pi}{2N_1 + 2N_2 - 1}.
  \ela
\end{theorem}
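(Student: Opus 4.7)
The plan is to work directly with the recurrence relations \eqref{F31} using the explicit trigonometric functions $f_1, f_2$ from \eqref{F29}. First I would apply the product-to-sum identities $\sin A \sin B = \tfrac{1}{2}[\cos(A-B)-\cos(A+B)]$ and $\cos A \cos B = \tfrac{1}{2}[\cos(A-B)+\cos(A+B)]$ to obtain
\[
  f_1(n_1)\,f_1(n_1^+) = \frac{a-c}{2\cos(\delta/2)}\bigl[\cos(\delta/2) - \cos(\delta(2n_1+\tfrac{1}{2}))\bigr],
\]
\[
  f_2(n_2)\,f_2(n_2^+) = \frac{a-c}{2\cos(\delta/2)}\bigl[\cos(\delta/2) + \cos(\delta(2n_2+\tfrac{1}{2}))\bigr].
\]
Substituting these into the functional equations gives the closed form
\[
  g_1^\two(n_1) = \frac{a-c}{2\cos(\delta/2)}\Bigl[\tfrac{a-2b+c}{a-c}\cos(\delta/2) + \cos(\delta(2n_1+\tfrac{1}{2}))\Bigr],
\]
with a sign-swapped analogue for $g_2^\two(n_2)$.

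Because the recurrence $g_i(n_i)\,g_i(n_i^+) = g_i^\two(n_i)$ forces $g_i^\two$ to vanish at any index where $g_i$ does, imposing $g_1(\pm N_1) = 0$ requires the closed form above to vanish at $n_1 \in \{-N_1,\,N_1 - \tfrac{1}{2}\}$. The parity of $\cos$ collapses these two endpoint requirements into the single equation
\[
  \frac{a-2b+c}{a-c}\cos(\delta/2) = -\cos\bigl(\delta(2N_1 - \tfrac{1}{2})\bigr),
\]
and an identical analysis of $g_2$ yields
\[
  \frac{a-2b+c}{a-c}\cos(\delta/2) = \cos\bigl(\delta(2N_2 - \tfrac{1}{2})\bigr).
\]
Compatibility of these two equalities demands $\cos(\delta(2N_2 - \tfrac{1}{2})) = -\cos(\delta(2N_1 - \tfrac{1}{2}))$, that is, $\delta(2N_1 + 2N_2 - 1) \equiv \pi \pmod{2\pi}$. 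The unique solution in $(0, \pi)$ is $\delta = \pi/(2N_1 + 2N_2 - 1)$, which is precisely \eqref{F24}; back-substituting then produces the explicit chain \eqref{F33}.

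To establish sign preservation, I would apply $\cos A - \cos B = -2\sin(\tfrac{A+B}{2})\sin(\tfrac{A-B}{2})$ after replacing $\tfrac{a-2b+c}{a-c}\cos(\delta/2)$ by its value, yielding the factorization
\[
  g_1^\two(n_1) = -\frac{a-c}{\cos(\delta/2)}\,\sin(\delta(n_1 + N_1))\,\sin(\delta(n_1 - N_1 + \tfrac{1}{2})),
\]
and analogously for $g_2^\two$. With the chosen $\delta$, both sine arguments lie in $(-\pi, \pi)$; for $n_1$ strictly inside $[-N_1, N_1 - \tfrac{1}{2}]$ the first sine is strictly positive and the second strictly negative, so $g_1^\two(n_1) > 0$, and likewise for $g_2$. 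The recurrence then ensures $g_i(n_i)$ and $g_i(n_i^+)$ have the same sign at every interior step, so the sign of $g_i$ is constant on $(-N_i, N_i)$, while the boundary vanishings $g_i^\two(-N_i) = 0 = g_i^\two(N_i - \tfrac{1}{2})$ produce precisely the zeros $g_i(\pm N_i) = 0$. I expect the main obstacle to be organizational rather than conceptual: tracking the trigonometric reductions carefully so that the two endpoint conditions for each $g_i$ are recognized to collapse into a single equation (avoiding a spurious over-determined system), and identifying that the compatibility of the two collapsed equations is exactly what pins down $\delta$.
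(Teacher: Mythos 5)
Your proposal is correct and follows essentially the same route as the paper: product-to-sum identities turn the vanishing of $g_1^\two$ and $g_2^\two$ at the endpoints into the conditions $\cos\delta(2N_1-\tfrac{1}{2})=-q\cos\tfrac{\delta}{2}$ and $\cos\delta(2N_2-\tfrac{1}{2})=q\cos\tfrac{\delta}{2}$, the stated $\delta$ reconciles them, and interior positivity of $g_i^\two$ (which you get from a product-of-sines factorization, the paper from monotonicity of the cosine) yields the sign preservation. Your claim that \eqref{F24} is the \emph{unique} compatible $\delta$ in $(0,\pi)$ is an overstatement — the compatibility condition also admits the branch $\delta(2N_1-2N_2)\equiv\pi \pmod{2\pi}$ and higher odd multiples, which the paper records in \eqref{F22} and rules out only in a subsequent remark — but since the theorem asserts only the ``if'' direction, this does not affect the validity of your argument.
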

\begin{proof}
  The condition that $g_1$ and $g_2$ do not change signs
  may be expressed as
  \bela{F17}
  \begin{aligned}
    a-b - f_1(n_1)f_1(n_1+\tfrac{1}{2}) > 0&\quad\mbox{ for}\quad -N_1+\tfrac{1}{2} \leq n_1 \leq N_1-1\\
    f_2(n_2)f_2(n_2+\tfrac{1}{2}) - a+b > 0&\quad\mbox{ for}\quad -N_2+\tfrac{1}{2} \leq n_2 \leq N_2-1.
  \end{aligned}
  \ela
  The condition $g_1(\pm N_1)=0$ is given by
  \bela{F18}
  a-b - \frac{a-c}{\cos(\delta/2)}\sin\delta(N_1-\tfrac{1}{2})\sin\delta N_1 = 0
  \ela
  On introduction of
  \[
    q \coloneqq \frac{a-2b+c}{a-c}
  \]
  and using the trigonometric identity
  $
    2\sin A \sin B = \cos(A-B) - \cos(A+B),
  $
  this may be formulated as
  \bela{F19}
  \cos\delta(2N_1-\tfrac{1}{2}) = -q\cos\frac{\delta}{2}.
  \ela
  Similarly, the requirement that $g_2(\pm N_2) = 0$ leads to
  \bela{F20}
  \cos\delta(2N_2-\tfrac{1}{2}) = q\cos\frac{\delta}{2},
  \ela
  and the inequalities \eqref{F17} may be written as
  \bela{F21}
  \begin{aligned}
    \cos\delta(2n_1+\tfrac{1}{2}) &> -q\cos\frac{\delta}{2} = \cos\delta(2N_1-\tfrac{1}{2})\\
    \cos\delta(2n_2+\tfrac{1}{2}) &> q\cos\frac{\delta}{2} = \cos\delta(2N_2-\tfrac{1}{2}).
  \end{aligned}
  \ela
  If we eliminate $q$ between \eqref{F19} and \eqref{F20} and use the trigonometric identity
  $
  2\cos A \cos B = \cos(A-B) + \cos(A+B),
  $
  we obtain
  \bela{F22}
  \cos\delta(N_1 + N_2 - \tfrac{1}{2})\cos\delta(N_1-N_2) = 0
  \ela
  so that $q$ is determined by either \eqref{F19} or \eqref{F20} once $\delta$ is known.
  If we make the choice
  \bela{F25}
  \delta = \frac{\pi}{2N_1 + 2N_2 - 1}
  \ela
  then \eqref{F22} holds and,
  due to the monotonicity of the cosine, it is evident that the inequalities \eqref{F21} are satisfied
  for the values of $n_1$ and $n_2$ stated in \eqref{F17}.
\end{proof}
\begin{remark}
  It turns out that all other solutions for $\delta > 0$ of
  \eqref{F22} lead to a violation of one of the inequalities
  \eqref{F21} if their extension to the continuous interval is
  postulated.  The latter constitutes a natural stronger assumption in
  view of the continuum limit.  Indeed, we first note that a priori,
  $N_1$ and $N_2$ do not seem to be on equal footing as can be seen
  from \eqref{F33}.  However, for the purpose of the argument below,
  which is only concerned with the inequalities \eqref{F21}, they are.
  Thus, without loss of generality, we assume that $N_1 \geq N_2$.
  The length of the interval
  $\mathcal{I} = [-N_1 + \tfrac{1}{2}, N_1-1]$ on which the cosine in
  the expression
  \[
    \cos\delta(2n_1 + \tfrac{1}{2})
  \]
  is evaluated is given by
  \[
    \Delta = \delta(2(N_1-1) + \tfrac{1}{2}) - \delta(2(-N_1+\tfrac{1}{2}) + \tfrac{1}{2})
    = \delta(4N_1 - 3).
  \]
  For the inequality
  \[
    \cos\delta(2n_1 + \tfrac{1}{2}) > \cos\delta(2N_1 - \tfrac{1}{2})
  \]
  to be satisfied on the continuous interval $\mathcal{I}$, it is necessary that $\Delta < 2\pi$.
  
  On the other hand, condition \eqref{F22} holds if and only if
  \[
    \delta = \frac{(2k+1)\pi}{2N_1 + 2N_2 - 1}
  \]
  for some $k \geq 0$ or
  \[
    \delta = \frac{(2k+1)\pi}{2N_1 - 2N_2}
  \]
  for some $k \geq 0$.
  In the first case, we find that $\Delta < 2\pi$ if and only if
  \[
    (2k+1)(4N_1-3) < 4N_1 + 4N_2 -2,
  \]
  which only holds for $k=0$.
  In the second case, it is required that
  \[
    (2k+1)(4N_1-3) < 4N_1 - 4N_2,
  \]
  which is not satisfied for any $k\geq 0$.
\end{remark}
\begin{remark}
  If we adopt the point of view that the positive integers $N_1$ and $N_2$ may be prescribed then the constraints \eqref{F19} and \eqref{F20} determine the quantities $\delta$ and $q$. This implies that $q$ can only attain a countable number of values in the open interval $(-1,1)$. The restrictions on possible values of $q$ will be explored in detail in Section~\ref{sec:discrete-vs-continuous}.
\end{remark}

\subsection{Initial conditions}

In the continuous case, the parametrization \eqref{eq:discrete-ellipsoids-parametrization-family}
of an ellipsoid $\mathcal{E}(s_3)$ may be regarded as a compound of the parametrization of half of an ellipsoid (semi-ellipsoid)
and its mirror image with respect to the plane $y=0$.
In the discrete setting, this symmetry may be maintained by choosing initial conditions
\[
  g_1(0) = \pm g_1^0,\qquad g_2(0) = g_2^0
\]
which define via \eqref{F31} a pair of discrete semi-ellipsoids denoted by
\[
  \begin{aligned}
    &\br^\pm : U \cup U^* \rightarrow \R^3\\
    &\br^+ \coloneqq \br|_{  g_1(0) = g_1^0, g_2(0) = g_2^0}\\
    &\br^- \coloneqq \br|_{  g_1(0) = -g_1^0, g_2(0) = g_2^0}.
  \end{aligned}
\]
A priori, $g_1^0$ and $g_2^0$ may be chosen arbitrarily, but a natural choice might be
\bela{F14}
g_1^0 = \sqrt{a-b},\quad g_2^0 = \sqrt{b-c}  
\ela
so that
\bela{F15}
\begin{pmatrix} x\\y\\z\end{pmatrix}(0,0,s_3) = \begin{pmatrix}0\\ \pm1\\ 0\end{pmatrix}
\ela
as in the classical continuous setting.
Even though the following considerations do not depend on $g_1^0$ and $g_2^0$,
the pictorial illustration of the results is based on the above choice.

\subsection{Combinatorics and topology}
\begin{figure}
  \begin{center}
    \includegraphics[width=0.8\textwidth]{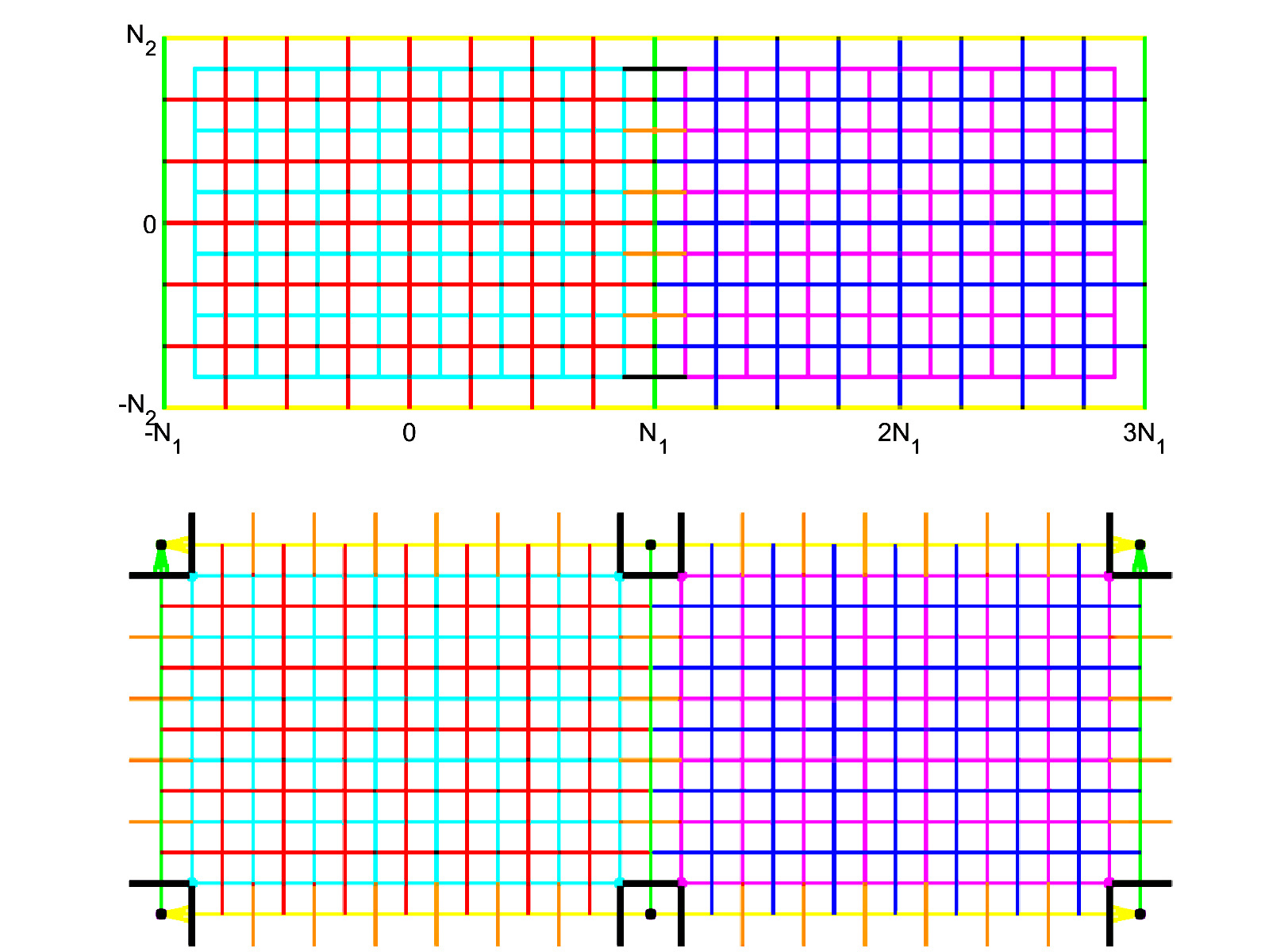}\\
    \vspace*{1em}
    \hspace*{1em}\includegraphics[width=0.7\textwidth]{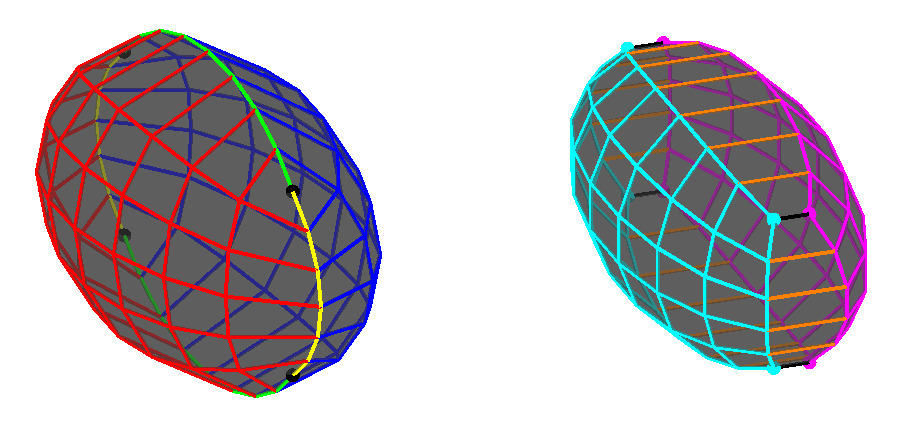}
  \end{center}
  \caption{
    \emph{Top:} The domain $V\cup V^*$ obtained by glueing the two copies of the domain $U$ along the edges on the green line $n_1=N_1$ and by connecting the two copies of the domain $U^*$ via the orange and black edges.
    \emph{Middle:} The domains $V$ and $V^*$ interpreted topologically as discrete spheres by folding $V\cup V^*$ along the line $n_1=N_1$, identifying corresponding yellow and green edges and adding orange and black edges.
    \emph{Bottom-left:} The discrete ellipsoid $\bR(V)$. The black vertices have valence 2 and constitute discrete analogues of umbilic points.
    \emph{Bottom-right:} The discrete dual ellipsoid $\bR(V^*)$. The cyan and magenta vertices have valence 3. The black edges constitute discrete analogues of umbilic points.
  }  
  \label{comb}
\end{figure}
Until now we referred to the binet \eqref{eq:discrete-ellipsoid} as a \emph{discrete ellipsoid}.
In the following it is more convenient to refer to the two polyhedral surfaces given by the restrictions to $U$ and $U^*$ as a \emph{discrete ellipsoid and its dual}.

In order to formalize the gluing process of the discrete semi-ellipsoids (and their duals) $\br^+(U)$, $\br^+(U^*)$, $\br^-(U)$, $\br^-(U^*)$,
we introduce the domains
\bela{F34}
 \begin{split}
  V &= \set{(n_1,n_2)\in\Z^2}{ -N_1\leq n_1\leq 3N_1,\, -N_2\leq n_2\leq N_2}\\
  V^* &= \set{(n_1,n_2)\in\left(\Z^2\right)^*}{ -N_1<n_1< 3N_1,\,-N_2< n_2<N_2}
 \end{split}
\ela
and combine the binets $\br^\pm$ according to
\bela{F35}
 \begin{split}
  \bR : V\cup V^*&\rightarrow\R^3\\
  \bR(n_1,n_2) &= \left\{\begin{aligned} &\br^+(n_1,n_2) & &\mbox{ if } n_1\leq N_1\\
                                                   &\br^-(2N_1-n_1,n_2) && \mbox{ if } n_1\geq N_1.\end{aligned}\right.
 \end{split}
\ela
The discrete semi-ellipsoids $\br^+(U)$ and $\br^-(U)$ are glued at the corresponding edges of their boundaries, while the dual discrete semi-ellipsoids $\br^+(U^*)$ and $\br^-(U^*)$ are glued by introducing additional edges connecting corresponding vertices of their boundaries. This is illustrated best by examining the combinatorics and topology of the pre-image of the pair of dual discrete ellipsoids. Firstly, the two copies of $U$ are linked at the edges of $V$ along the (green) line $n_1=N_1$ and the two copies of $U^*$ are linked by the (orange and black) edges of $V^*$ which connect the vertices $(N_1-\frac{1}{2},n_2)$ and $(N_1+\frac{1}{2},n_2)$ (see Figure \ref{comb}, top).
The second step is to fold the domain $V\cup V^*$ along the line $n_1=N_1$ and identify corresponding edges on the (yellow) line $n_2 = -N_2$, the (yellow) line $n_2=N_2$, and the (green) lines $n_1=-N_1$ and $n_1=3N_1$. In this manner, the toplogy of the domain $V$ may be regarded as that of a discrete sphere. Finally, the vertices $(n_1,-N_2+\frac{1}{2})$ and $(2N_1-n_1,-N_2+\frac{1}{2})$ are linked by (orange and black) edges and so are the vertices $(n_1,N_2-\frac{1}{2})$ and $(2N_1-n_1,N_2-\frac{1}{2})$ (see Figure \ref{comb}, middle).
Likewise, the vertices $(-N_1+\frac{1}{2},n_2)$ and $(3N_1-\frac{1}{2},n_2)$ are connected by (orange and black) edges so that, topologically, the domain $V^*$ also becomes a discrete sphere.
\begin{remark}
  It is emphasised that, for reasons of symmetry,
  the additional edges of the discrete ellipsoid $\bR(V^*)$ are also orthogonal to the corresponding dual edges of $\bR(V)$
  and the additional quadrilaterals generated by the completion of the set of edges of $\bR(V^*)$ are also planar.
  Hence, both discrete ellipsoids are entirely composed of planar quadrilaterals
  and all corresponding dual edges are mutually orthogonal. 
\end{remark}
\begin{remark}
  \label{rem:full-circles}
  By Theorem~\ref{discretemaintheorem}~\ref{discretemaintheorem1},
  the diagonal polygons of each of the discrete semi-ellipsoids constitute discrete semi-circles.
  While on each of the two parts of the domain $n_1 \leq N_1$ and $n_1 \geq N_1$,
  the circular cross sections are still given by the parameter polygons that satisfy $\tfrac{n_1 \pm n_2}{2} = \text{const}$,
  they do not fit together in this manner when crossing the line $n_1 = N_1$,
  or any other of the lines along which the semi-ellipsoids are glued.
  Instead, when crossing any such line, the diagonal direction needs to be switched to continue on the same discrete circle.
  On the dual discrete ellipsoid, one of the added edges (orange in Figure~\ref{comb}) may be added to the discrete circle before switching direction.
  In this way, two discrete semi-circles may be glued together to obtain one ``full'' (closed) discrete circle.
\end{remark}

\subsection{Umbilic vertices and edges}
\label{sec:umbilic}
As stated in Remark~\ref{rem:missing-umbilics}, Theorem~\ref{thm:circular-cross-sections} also highlights
the connection between the umbilic points on an ellipsoid and its circular cross sections.
In the discrete setting, we first observe that
even though the discrete ellipsoids constructed in the preceeding are entirely composed of planar quadrilaterals, there exist vertices which do not have valence 4. Specifically, any discrete ellipsoid $\bR(V)$ has four vertices of valence 2, namely the vertices $\bR(N_1,\pm N_2)$ and $\bR(-N_1,\pm N_2)= \bR(3N_1,\pm N_2)$ marked in black in Figure \ref{comb}, corresponding to the vertices where $g_1(n_1)$ and $g_2(n_2)$ vanish simultaneously. 
Since the vanishing of the functions $g_1$ and $g_2$ characterises the umbilic points in the continuous case, we may refer to the four vertices of valence 2 as ``umbilic vertices''. In the case of the discrete ellipsoids $\bR(V^*)$, there exist 8 vertices of valence 3. These are the images of the cyan and magenta vertices in Figure \ref{comb}, namely $\bR(N_1\pm\frac{1}{2},-N_2+\frac{1}{2})$, \mbox{$\bR(N_1\pm\frac{1}{2},N_2-\frac{1}{2})$}, $\bR(-N_1+\frac{1}{2},\pm N_2 \mp\frac{1}{2})$ and $\bR(3N_1-\frac{1}{2},\pm N_2 \mp\frac{1}{2})$. These vertices are pairwise linked by four (black) edges which are the images of four sets of identical edges displayed in Figure \ref{comb}. Those sets are the four black edges in the top-left and top-right corners of Figure~\ref{comb} which are identical due to the glueing process of the domain $V^*$, the four black edges in the bottom-left and bottom-right corners and the two triples of black edges in the vicinity of the line $n_1=N_1$. Accordingly, we may refer to those four edges as ``umbilic edges".
\begin{theorem}
  \label{thm:discrete-umbilics}
  The discrete ellipsoids \eqref{F35} exhibit the following properties (see Figure~\ref{fig:discrete-tangent-cone}).
  \begin{enumerate}
  \item
    The umbilic vertices and edges constitute degenerations of the discrete circular cross sections.
  \item
    The planes $\plane(n_1,n_2)$ along any discrete circular cross section
    intersect in a point which lies on one of the two lines passing through opposite umbilic vertices.
  \end{enumerate}
\end{theorem}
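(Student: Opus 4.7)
The plan is to handle the two parts in turn, leveraging the polarity result in Theorem~\ref{thm:discrete-ellipsoid-curvature-line-parametrization}~\ref{thm:discrete-ellipsoid-curvature-line-parametrization2} and the explicit form of the discrete circular cross sections from Theorems~\ref{thm:discrete-diagonal-curvature-circular} and~\ref{thm:discrete-diagonal-curvature-circular2}.

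For part (i), I would identify the umbilic vertices with combinatorial extrema of the diagonal coordinates $n_\pm=(n_1\pm n_2)/2$. The vertex $\bR(N_1,N_2)$ sits at the corner of the primal domain $U$ where the boundary conditions $g_1(\pm N_1)=0$ and $g_2(\pm N_2)=0$ simultaneously force $y=0$, and since $U^*$ has strictly smaller range of $n_+$, the value $n_+=(N_1+N_2)/2$ is attained on $U\cup U^*$ only at $(N_1,N_2)$. Hence the parameter polygon $n_+=(N_1+N_2)/2$ reduces to the single vertex $\bR(N_1,N_2)$, which is precisely the degenerate discrete circular cross section. The same reasoning handles the remaining three corners (with the roles of $n_+$ and $n_-$ interchanged and signs reversed). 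On the dual lattice, an analogous argument, combined with the gluing process described in Section~\ref{sec:boundary-conditions}, shows that the extremal parameter polygons reduce to single edges joining trivalent vertices; these are exactly the umbilic edges.

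For part (ii), polarity does the heavy lifting. By Theorem~\ref{thm:discrete-ellipsoid-curvature-line-parametrization}~\ref{thm:discrete-ellipsoid-curvature-line-parametrization2}, the face plane $\plane(n_1,n_2)$ is the polar plane of the vertex $\bR(n_1,n_2)$ with respect to the underlying classical ellipsoid $\mathcal{E}$. Since the vertices of any discrete circular cross section are coplanar by Theorem~\ref{discretemaintheorem}~\ref{discretemaintheorem1}, projective duality implies that the face planes along the cross section all pass through one point $P$, namely the pole of the containing plane with respect to $\mathcal{E}$. As the cross section ranges over one of the two families, the containing planes form a parallel family, so the poles $P$ lie on the polar line $\ell$ of their common line at infinity. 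By Proposition~\ref{prop:ellipsoid-polar-points} applied to $\mathcal{E}$, the line $\ell$ passes through the pair of opposite classical umbilic points of $\mathcal{E}$.

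The remaining task, and the main obstacle, is to show that $\ell$ coincides with the line through the pair of opposite discrete umbilic vertices. The symmetry $\bR(-n_1,-n_2)=-\bR(n_1,n_2)$ inherited from the parities of sine and cosine in \eqref{F29}, together with the corresponding origin-symmetry of the classical umbilic pair, reduces the claim to an equality of slopes $z/x$ in the plane $y=0$. Starting from \eqref{F27} and \eqref{F29} and applying product-to-sum identities together with the quantisation $\delta=\pi/(2N_1+2N_2-1)$, which yields $\sin(\delta(N_1+N_2))=\cos(\delta/2)$ and hence $\sin(\delta(N_1-N_2))=\cos(\delta(2N_2-\tfrac{1}{2}))$, I obtain the slope
\[
  \frac{z(N_1,N_2)}{x(N_1,N_2)} = \frac{\cos(\delta/2)-\sin(\delta(N_1-N_2))}{\cos(\delta/2)+\sin(\delta(N_1-N_2))}\,\tan s_3,
\]
which by the characterisation~\eqref{F33} of $q=(a-2b+c)/(a-c)$ simplifies to $\tan s_3\cdot(b-c)/(a-b)$. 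A direct computation from Proposition~\ref{prop:ellipsoid-circular-sections} identifies this with the slope of the line through the opposite classical umbilic points of the ellipsoid~\eqref{eq:deformation-family}, so $\ell$ coincides with the line through opposite discrete umbilic vertices. The second family is treated by the same computation with the appropriate sign change.
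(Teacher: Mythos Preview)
Your proposal is correct and follows essentially the same route as the paper: both parts reduce to polarity with respect to the underlying classical ellipsoid, and the crux of part~(ii) is the verification that the line through opposite discrete umbilic vertices coincides with the line through the classical umbilic points, which in either treatment amounts to the trigonometric identity $\tfrac{a-b}{b-c}=\tfrac{\tan\delta N_1}{\tan\delta N_2}$ obtained from the quantisation $\delta(2N_1+2N_2-1)=\pi$ together with the boundary relation \eqref{F20}. The only cosmetic difference is that the paper checks this by showing the polar planes of the umbilic vertices lie in the parallel families \eqref{eq:ellipsoid-circular-sections} (a $2\times 2$ determinant), whereas you compare the slopes $z/x$ directly; these are equivalent formulations of the same computation.
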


\begin{proof}\
  \begin{enumerate}
  \item
    According to Remark~\ref{rem:full-circles},
    the discrete circular cross sections are given by closed diagonal polygons in $V \cup V^*$.
    Thus, we may think of the set of discrete circular cross sections as the union of two finite sequences of diagonal polygons.
    Each of these sequences starts and ends in a discrete circle consisting of an umbilic vertex and an umbilic edge.
  \item
    By Theorem~\ref{discretemaintheorem}~\ref{discretemaintheorem2},
    the planes $\plane(n_1,n_2)$ along a discrete circular cross section intersect in a point.
    This point is the pole of the plane of the discrete circular cross section,
    which coincides with the plane of a circular cross section of the underlying classical ellipsoid \eqref{eq:ellipsoid} with
    \[
      \alpha = \hat f_3(s_3)^2, \quad
      \beta = 1, \quad
      \gamma = \hat h_3(s_3)^2.
    \]
    By Theorem~\ref{thm:circular-cross-sections}~\ref{thm:circular-cross-sections2}, this point lies on one of the two lines passing through opposite umbilic points of the classical ellipsoid.
    Thus, it remains to show that these two lines coincide with the two lines passing through opposite discrete umbilic vertices,
    or equivalently, that the polar planes of the discrete umbilic vertices contain circular cross sections of the classical ellipsoid.

    Now, the four discrete umbilic vertices are given by
    \[
      \begin{pmatrix}
        \pm\frac{1}{\cos(\delta/2)}\sqrt{\frac{a-c}{a-b}}\hat{f}_3(s_3)\sin \delta N_1 \cos \delta N_2\\
        0\\
        \pm\frac{1}{\cos(\delta/2)}\sqrt{\frac{a-c}{b-c}}\hat{h}_3(s_3)\cos \delta N_1 \sin \delta N_2
      \end{pmatrix}
    \]
    and thus their polar planes are
    \begin{equation}
      \label{eq:discrete-umbilic-polar-planes}
      \frac{\sin \delta N_1 \cos \delta N_2}{\sqrt{a-b}} \frac{x}{\hat{f}_3}
      \pm \frac{\cos \delta N_1 \sin \delta N_2}{\sqrt{b-c}} \frac{z}{\hat{h}_3}
      = \pm \frac{\cos(\delta/2)}{\sqrt{a-c}}.
    \end{equation}
    On the other hand, by Proposition~\ref{prop:ellipsoid-circular-sections}, the families of planes of the circular cross sections of the classical ellipsoid
    are given by
    \[
      \sqrt{\hat{f}_3^2 - 1} \frac{x}{\hat{f}_3} \pm \sqrt{1 - \hat{h}_3^2} \frac{z}{\hat{h}_3} = \mu_\pm
    \]
    or, by applying \eqref{E11-variation}, by
    \[
      \sqrt{a-b} \frac{x}{\hat{f}_3} \pm \sqrt{b-c} \frac{z}{\hat{h}_3} = \tilde\mu_\pm.
    \]
    Hence, the four planes \eqref{eq:discrete-umbilic-polar-planes} belong to these two families of planes if and only if
    \[
      \det
      \begin{pmatrix}
        \frac{\sin \delta N_1 \cos \delta N_2}{\sqrt{a-b}} & \sqrt{a-b}\\
        \frac{\cos \delta N_1 \sin \delta N_2}{\sqrt{b-c}} & \sqrt{b-c}
      \end{pmatrix}
      = 0,
    \]
    which is equivalent to
    \[
      \frac{a-b}{b-c} = \frac{\tan\delta N_1}{\tan\delta N_2}.
    \]
    To prove this identity,
    we reformulate the relation between the ratios of differences of the constants $a,b,c$ and the positive integers $N_1, N_2$
    given by \eqref{F19} and \eqref{F20} according to
    \bela{AA4}
    \frac{a-b}{b-c} = \frac{1+q}{1-q} = \frac{\cos\delta/2 + \cos\delta(2N_2-1)}{\cos\delta/2 - \cos\delta(2N_2-1)}
    =\frac{\cos\delta N_2\cos\delta(N_2-\frac{1}{2})}{\sin\delta N_2\sin\delta(N_2-\frac{1}{2})}
    \ela
    and apply the definition of $\delta$ formulated as
    \bela{AA5}
    \delta\left(N_2-\frac{1}{2}\right) = \frac{\pi}{2} - \delta N_1.
    \ela
  \end{enumerate}
\end{proof}

\begin{remark}
  A closer examination of the properties of the vertices which do not have valence~4 in the context of the original unscaled discrete confocal ellipsoids parametrized by \eqref{F4}, \eqref{F5} confirms the validity of the terminology introduced above.
  Specifically, we consider the dependence on $s_3$ of, for instance, the umbilic vertex $\bR(N_1,N_2)$ and the vertex $\bR(N_1^-,N_2^-)$ incident to the corresponding umbilic edge. Since $g_1(N_1)=g_2(N_2)=0$, the functional equations \eqref{F5}$_{1,2,3,4}$ imply that
  \bela{F36}
  \begin{aligned}
    f_1^\two(N_1^-) &= f_2^\two(N_2^-) = a-b\\
    h_1^\two(N_1^-) &= h_2^\two(N_2^-) = b-c.  
  \end{aligned}
  \ela
  Hence, on use of \eqref{F5}$_{5,6}$, ``squaring'' $x$ and $z$ as given by \eqref{F4} results in
  \bela{F37}
  \frac{x(N_1,N_2,s_3)x(N_1^-,N_2^-,s_3)}{a-b} -
  \frac{z(N_1,N_2,s_3)z(N_1^-,N_2^-,s_3)}{b-c} = 1,
  \quad
  y(N_1,N_2,s_3)y(N_1^-,N_2^-,s_3) = 0,
  \ela
  which can be regarded as an analogue of the classical focal hyperbola given by
  \[
    \frac{x^2}{a - b} + \frac{z^2}{b - c} = 1, \quad y = 0.
  \]
  Indeed, it is known that this hyperbola constitutes the locus of all umbilic vertices
  on the family of confocal ellipsoids \eqref{E1}$_3$.
\end{remark}

\section{Discrete vs classical ellipsoids}
\label{sec:discrete-vs-continuous}

\subsection{Closed discrete ellipsoids}

It is evident that the ratio of any two differences of the constants $a>b>c$ completely characterises the associated one-parameter family of continuous ellipsoids.
In the discrete setting, the closed discrete ellipsoids considered above also depend on the choice of the positive integers $N_1$ and $N_2$.
However, given $N_1$ and $N_2$, only one difference of the constants $a>b>c$ may be prescribed arbitrarily.
Any other difference is determined by the relation \eqref{F33}, that is,
\bela{F38}
   q := \frac{a-2b+c}{a-c} =  \frac{\displaystyle\cos\frac{(4N_2-1)\pi}{4N_1+4N_2-2}}{\displaystyle\cos\frac{\pi}{4N_1+4N_2-2}}.
\ela
Since
\bela{F39}
  q - 1 = - 2\frac{b-c}{a-c} <0,\quad q + 1 = 2\frac{a-b}{a-c} > 0,
\ela
it is concluded that $q \in (-1,1)$.
In fact, the following statement may be made.
It may be regarded as a first measure of the quality of the discretization scheme proposed here.
\begin{theorem}
  \label{thm:boundary-condition-quality}
  Any given number $q\in(-1,1)$ may be approximated arbitrarily well by
  the right-hand side of \eqref{F38} by
  choosing suitable positive integers $N_1$ and $N_2$.
\end{theorem}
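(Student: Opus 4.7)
The plan is to show that the expression on the right of \eqref{F38} can be written in a form where a single limit parameter $t \in (0,1)$ controls the value, and then to exploit density of the rationals $N_2/(N_1+N_2)$ in $(0,1)$ together with continuity of $\cos$.

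First I would introduce the abbreviation $M = 4N_1 + 4N_2 - 2$ and rewrite
\[
  q(N_1,N_2) = \frac{\cos\theta(N_1,N_2)}{\cos(\pi/M)}, \qquad
  \theta(N_1,N_2) = \frac{(4N_2-1)\pi}{M}.
\]
As $N_1 + N_2 \to \infty$, one has $M \to \infty$, so $\cos(\pi/M) \to 1$ and in fact the error is $O(1/M^2)$. Thus the behaviour of $q$ is governed entirely by $\theta$. A short calculation gives
\[
  \theta(N_1,N_2) = \pi \cdot \frac{4N_2 - 1}{4(N_1+N_2) - 2},
\]
so that if $N_2/(N_1+N_2) \to t$ while $N_1 + N_2 \to \infty$, then $\theta \to t\pi$ and therefore $q(N_1,N_2) \to \cos(t\pi)$.

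Next I would observe that $t \mapsto \cos(t\pi)$ is a continuous bijection from $(0,1)$ onto $(-1,1)$. Hence for any target $q_0 \in (-1,1)$ there is a unique $t_0 \in (0,1)$ with $\cos(t_0\pi) = q_0$. By density of the rationals, for every $\varepsilon > 0$ one can choose positive integers $N_1, N_2$ with $N_2/(N_1+N_2)$ arbitrarily close to $t_0$ and with $N_1 + N_2$ arbitrarily large. A standard triangle-inequality estimate, splitting $|q(N_1,N_2) - q_0|$ into the contribution coming from $|\cos(\pi/M) - 1|$ and the contribution from $|\cos\theta - \cos(t_0\pi)|$, together with the uniform Lipschitz property of $\cos$, then yields $|q(N_1,N_2) - q_0| < \varepsilon$.

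The only mildly delicate point is to ensure that $N_1$ and $N_2$ can simultaneously be made large while $N_2/(N_1+N_2)$ stays close to $t_0$; this is immediate by replacing any rational approximation $p/q \approx t_0$ with $(kp)/(kq)$ for large $k$, which keeps the ratio fixed but sends both $N_2 = kp$ and $N_1 = k(q-p)$ to infinity. I do not anticipate a substantive obstacle here; the statement is essentially a density argument combined with the continuity of a single trigonometric expression, and no further structure from the earlier sections is needed.
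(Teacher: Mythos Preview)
Your argument is correct. The idea of writing $\theta(N_1,N_2)\to t\pi$ as $N_2/(N_1+N_2)\to t$ with $N_1+N_2\to\infty$, invoking density of the rationals in $(0,1)$ and continuity of $\cos$, and scaling an approximating fraction $p/q$ to $(kp)/(kq)$ to force $M\to\infty$, is complete and requires nothing further from the paper.

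The paper takes a slightly different route. It fixes the total $N=N_1+N_2$, sets $S_N(n)=\cos\frac{(4n-1)\pi}{4N-2}\big/\cos\frac{\pi}{4N-2}$, and shows directly that $S_N(0)=1$, $S_N(N)=-1$, and that the sequence $n\mapsto S_N(n)$ is strictly decreasing with consecutive gaps bounded by $4\pi/(4N-2)$ via the product-to-sum identity. In other words, for each $N$ the admissible values already form a monotone mesh in $(-1,1)$ of width $O(1/N)$, so a discrete intermediate value argument finishes the job. Compared with your limit-plus-density approach, the paper's version is more explicitly constructive (it tells you how large $N$ must be for a prescribed accuracy) and avoids the two-step $\varepsilon$ split; your version is cleaner conceptually and isolates the two sources of error (the denominator $\cos(\pi/M)$ and the numerator angle) before combining them. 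Both arguments are short and neither relies on any earlier structural results.
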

\begin{proof}
  We introduce the sequence
  \bela{F40}
  S_N(n) = \frac{\displaystyle\cos\frac{(4n-1)\pi}{4N-2}}{\displaystyle\cos\frac{\pi}{4N-2}},\quad 0\leq n\leq N
  \ela
  for any integer $N\geq2$ so that the set of values of $q$ generated by the right-hand side of \eqref{F38} is given by
  \bela{F41}
  Q := \{S_{N}(n): 0 < n < N,\, N\geq 2\}.
  \ela
  We now observe that 
  \bela{F42}
  S_N(0) = 1,\quad S_N(N)=-1 
  \ela
  and that the sequence $S_N(n)$ is strictly decreasing. In fact, since
  \bela{F43}
  S_N(n) - S_N(n+1) = 4\sin\frac{(4n+1)\pi}{4N-2}\sin\frac{\pi}{4N-2},\quad 0\leq n<N,
  \ela
  we conclude that
  \bela{F44}
  0 < S_N(n) - S_N(n+1) < \frac{4\pi}{4N-2}.
  \ela
\end{proof}

\subsection{Closed samplings of webs on classical ellipsoids}

In the classical case, we can find a continuous web of diagonally related nets from lines of curvature and circular cross sections on any ellipsoid (arbitrary $a > b > c$).
However, if we want to construct a closed sampling of this web with a prescribed number of lines of curvatures in each direction, say $2M_1$ and $2M_2$, this creates an additional closing condition (cf.\ Figure~\ref{fig:smooth-vs-discrete}, left).
\begin{theorem}
  \label{thm:closing-condition-web}
  A closed sampling of a web of diagonally related nets of lines of curvature and circular cross sections on an ellipsoid
  which consists of $2M_1$ and $2M_2$ lines of curvature respectively exists if and only if
  \begin{equation}
    \label{eq:web-closing-condition}
    \sqrt{\frac{a - b}{a - c}} = \cos \frac{M_2 \pi}{2 (M_1 + M_2)}
  \end{equation}
\end{theorem}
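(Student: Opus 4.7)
The plan is to base everything on the curvature line parametrization of the two halves of $\mathcal{E}$ provided by Proposition~\ref{prop:ellipsoid-diagonal-relation}. On each half, the rectangular parameter domain
\[
  R = [-s_1^0, s_1^0] \times [-s_2^0, s_2^0], \qquad s_1^0 = \sin^{-1}\sqrt{\tfrac{a-b}{a-c}}, \quad s_2^0 = \cos^{-1}\sqrt{\tfrac{a-b}{a-c}},
\]
maps diffeomorphically onto the half-ellipsoid, with the boundary of $R$ collapsing onto the equatorial ellipse $y = 0$. The lines of curvature are the grid lines $s_i = \mathrm{const}$ and the circular cross sections are the diagonals $s_1 \pm s_2 = \mathrm{const}$. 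The elementary identity $\sin^{-1}t + \cos^{-1}t = \pi/2$ yields the key relation
\[
  s_1^0 + s_2^0 = \tfrac{\pi}{2},
\]
upon which the whole argument hinges.

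A closed sampling of the web with $2M_1$ lines of curvature in direction $s_1$ and $2M_2$ in direction $s_2$ amounts to choosing finite sets $\{s_i^{(k)}\}_{k=1}^{2M_i} \subset (-s_i^0, s_i^0)$, symmetric about $0$. For each of the associated circles $s_1 \pm s_2 = s_1^{(k)} \pm s_2^{(\ell)}$ to pass through more than one vertex --- which is precisely the defining property of diagonally related nets from Definition~\ref{mutual} --- the two sequences must be arithmetic progressions sharing a common step $\Delta$. A symmetry-respecting choice (matching also the continuum analogue of the discrete boundary convention in Theorem~\ref{thm:boundary-conditions}) is $s_i^{(k)} = -s_i^0 + (k - \tfrac{1}{2})\Delta$, forcing $2s_i^0 = 2M_i \Delta$ and hence
\[
  \Delta = \frac{s_1^0}{M_1} = \frac{s_2^0}{M_2}.
\]
Combined with $s_1^0 + s_2^0 = \pi/2$ this yields $s_1^0 = M_1\pi/(2(M_1+M_2))$ and $s_2^0 = M_2\pi/(2(M_1+M_2))$, and substituting into $\sin s_1^0 = \sqrt{(a-b)/(a-c)}$ together with $\sin(\pi/2 - x) = \cos x$ produces the claimed closing condition \eqref{eq:web-closing-condition}.

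I expect the main obstacle to lie in the sufficiency direction: verifying that under the above spacing condition the sampled diagonals on $R$ actually glue across the boundary $y = 0$ into closed planar circles hitting precisely the mirror-image sample vertices on the opposite half. A clean way to handle this is via the billiard unfolding of the $45^\circ$ trajectories on $R$: unfolding the two copies of $R$ (one per half) by reflections produces a $4s_1^0 \times 4s_2^0$ torus on which each diagonal becomes a straight line, and the commensurability $s_1^0 : s_2^0 = M_1 : M_2$ is precisely the rationality condition ensuring that each such trajectory closes after hitting each wall of $R$ in a way compatible with the sample lattice. The necessity direction then follows by reversing this argument: any violation of the equality $s_1^0/M_1 = s_2^0/M_2$ would leave at least one sampled diagonal missing the required vertex either on the same or on the opposite half, destroying the web structure.
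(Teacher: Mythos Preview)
Your proposal is correct and follows essentially the same route as the paper: use the curvature line parametrization of Proposition~\ref{prop:ellipsoid-diagonal-relation} on the rectangular domain $[-s_1^0,s_1^0]\times[-s_2^0,s_2^0]$, observe that a diagonal web forces a common step size so that $s_1^0/M_1=s_2^0/M_2$, and combine with $s_1^0+s_2^0=\pi/2$ to obtain \eqref{eq:web-closing-condition}. The paper's proof is terser and does not spell out the arithmetic-progression argument or the gluing across $y=0$; your billiard-unfolding remark is a welcome embellishment but not a genuinely different method.
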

\begin{proof}
  Consider the curvature line parametrization \eqref{eq:discrete-ellipsoids-parametrization-family} of one half of an ellipsoid,
  which exhibits the property that the curves $s_1 \pm s_2 = \text{const}$ are the circular cross sections.
  This parametrization has a rectangular domain of lengths $2s_1^0$ and $2s_2^0$.
  To create a closed sampling of the web of the two diagonally related nets,
  we have to use the same constant step size in both directions.
  For a sampling on the whole ellipsoid consisting of $2M_2$ lines of curvature in $s_1$-direction and $2M_1$ lines of curvature in $s_2$-direction, the following condition must be satisfied:
  \[
    \frac{2s_1^0}{M_1} = \frac{2s_2^0}{M_2},
  \]
  or, equivalently,
  \[
    2M_2\sin^{-1} \sqrt{\frac{a-b}{a-c}} =  2M_1\cos^{-1} \sqrt{\frac{a-b}{a-c}}.
  \]
  Using $\sin^{-1}x = \frac{\pi}{2} - \cos^{-1}x$, this yields \eqref{eq:web-closing-condition}.
\end{proof}
Condition \eqref{F33} may be viewed as a discrete analogue of \eqref{eq:web-closing-condition}.
In terms of $q$, the condition \eqref{eq:web-closing-condition} may be written as
\begin{equation}
  \label{eq:web-closing-condition-var}
  q = 2 \frac{a - b}{a - c} - 1 = \cos \frac{M_2 \pi}{M_1 + M_2}.
\end{equation}
To obtain the same number of lines in the sampling of the web as the number of discrete parameter lines on the discrete ellipsoid,
we set $M_1 = 2N_1$ and $M_2 = 2N_2$ (cf.\ Figure~\ref{fig:smooth-vs-discrete}).
With that we see that \eqref{eq:web-closing-condition-var} behaves asymptotically ($N_1, N_2 \rightarrow \infty$) as \eqref{F33}.

\appendix

\section{Proof of Proposition~\ref{prop:ellipsoid-circular-sections}}
\label{sec:appendix-proof}
We embed $\R^3$ into the real projective space $\RP^3$ and further into the complex projective space $\CP^3$.
We introduce homogeneous coordinates $x_1, x_2, x_3, x_4$ by $x = \frac{x_1}{x_4}, y = \frac{x_2}{x_4}, z = \frac{x_3}{x_4}$ and identify the plane $x_4 = 0$ with the plane at infinity.
Recall that a conic in a plane is a circle if and only if it intersects the absolute (imaginary) conic
\[
  \mathcal{Z} : \quad x_1^2 + x_2^2 + x_3^2 = 0, \quad x_4 = 0
\]
in two points.
The four points of intersection of $\mathcal{Z}$ and the ellipsoid
\[
  \mathcal{E} : \quad \frac{x^2}{\alpha} + \frac{y^2}{\beta} + \frac{z^2}{\gamma} = 1
\]
are given by
\[
  P_{\sigma, \tau} =
  \left[
    \begin{matrix}
      \sigma \sqrt{\tfrac{1}{\gamma} - \tfrac{1}{\beta}}\\
      \tau i \sqrt{\tfrac{1}{\gamma} - \tfrac{1}{\alpha}}\\
      \sqrt{\tfrac{1}{\beta} - \tfrac{1}{\alpha}}\\
      0
    \end{matrix}
  \right],\qquad
  \sigma, \tau \in \{+,-\}
\]
and come in two complex conjugate pairs
$
P_{+,+} = \widebar{P}_{+,-}, \quad P_{-,+} = \widebar{P}_{-,-}.
$
Thus, they span two real lines at infinity
\[
  \ell_\sigma
  = \rm{span}
  \left\{
    \left(
      \begin{matrix}
        \sigma \sqrt{\tfrac{1}{\gamma} - \tfrac{1}{\beta}}\\
        0\\
        \sqrt{\tfrac{1}{\beta} - \tfrac{1}{\alpha}}\\
        0
      \end{matrix}
    \right),
    \left(
      \begin{matrix}
        0\\
        1\\
        0\\
        0
      \end{matrix}
    \right)
  \right\},\qquad
  \sigma, \in \{+,-\}.
\]
The two one-parameter families of planes that contain $\ell_\sigma$ respectively are given by
\[
  \sqrt{\tfrac{1}{\beta} - \tfrac{1}{\alpha}} \, x_1 - \sigma \sqrt{\tfrac{1}{\gamma} - \tfrac{1}{\beta}} \, x_3 - \mu_\sigma x_4 = 0
\]
for $\mu_\sigma \in \R \cup \{\infty\}$.
Since the two lines $\ell_\sigma$ are at infinity, the two families consist of parallel planes.
One easily checks that the intersection of these planes with $\mathcal{E}$ is non-empty
for the values of $\mu_\sigma$ given in \eqref{eq:ellipsoid-circular-sections}.
In fact, by construction, these intersections constitute all circles contained in $\mathcal{E}$.
Furthermore, one easily verifies that, for the boundary values of $\mu_\pm$, the poles of the (tangent) planes
coincide with the umbilic points of $\mathcal{E}$, which are given by
\[
  \begin{pmatrix}
    \pm\sqrt{\frac{\alpha(\alpha - \beta)}{\alpha - \gamma}}\\
    0\\
    \pm\sqrt{\frac{\gamma(\beta - \gamma)}{\alpha - \gamma}}
  \end{pmatrix}.
  \eqno\qed
\]

\printbibliography

\end{document}